\newtheorem{lemma}{Lemma}[section]
\newtheorem{theorem}{Theorem}[section]
\newtheorem{corollary}{Corollary}[section]
\newtheorem{remark}{Remark}[section]
\newtheorem{assumptionA}{A-\hspace{-1.2mm}}
\newtheorem{assumptionB}{B-\hspace{-1.2mm}}
\newtheorem{assumptionC}{C-\hspace{-1.2mm}}
\begin{document}
\title{On Tamed Euler  Approximations of SDEs Driven by L\'evy Noise with Applications to Delay Equations}

\author{Konstantinos Dareiotis, Chaman Kumar and Sotirios Sabanis \\ School of Mathematics,  University of Edinburgh\\ Edinburgh, EH9 3JZ, United Kingdom}

 \maketitle

\begin{abstract}
We extend the taming techniques for explicit Euler approximations of stochastic differential equations (SDEs) driven by L\'evy noise with super-linearly growing drift coefficients. Strong convergence results are presented for the case of locally Lipschitz coefficients. Moreover, rate of convergence results are obtained in agreement with classical literature when the local Lipschitz continuity assumptions are replaced by global and, in addition, the drift coefficients satisfy polynomial Lipschitz continuity. Finally, we further extend these techniques to the case of  delay equations.
\end{abstract}

\section{Introduction}

In economics, finance, medical sciences, ecology, engineering,  and many other branches of sciences, one often encounters problems which are influenced by event-driven uncertainties. For example, in finance, the unpredictable nature of important events such market crashes, announcements made by central banks, changes in credit ratings, defaults, etc. might have sudden and significant impacts on the stock price movements. Stochastic differential equations (SDEs) with jumps, or more precisely SDEs driven by L\'evy noise, have been widely used to model such event-driven phenomena. The interested reader may refer, for example, to  \cite{cont2004, oksendal2005, situ2005} and references therein.

Many such SDEs do not have explicit solutions and therefore one requires numerical schemes so as to approximate their solutions. Over the past few years, several explicit and implicit schemes of SDEs driven by L\'evy noise have been studied and results on their strong and weak convergence were proved. For a comprehensive discussion on these schemes, one could refer to \cite{bruti2007, higham2005, higham2006, jacod2005, platen2010}, and references therein.

It is also known, however, that the computationally efficient explicit Euler schemes of SDEs (even without jumps) may not convergence in strong ($\mathcal{L}^q$) sense when the drift coefficients are allowed to grow super-linearly, see for example  \cite{hutzenthaler2010}. The development of tamed Euler schemes was a recent breakthrough in order to address this problem; one may consult  \cite{hutzenthaler2012, sabanis2013} as well as \cite{HJ, Sabanis2013a, Tretyakov-Zhang} and references therein for a thorough investigation of the subject.

In this article, we propose explicit tamed Euler schemes to numerically solve SDEs with random coefficients driven by L\'evy noise. The taming techniques developed here allow one to approximate these SDEs with drift coefficients that grow super-linearly. By adopting the approach of \cite{sabanis2013}, we prove   strong convergence in (uniform) $\mathcal{L}^q$ sense  of these tamed schemes by assuming one-sided local Lipschitz condition on drift and local Lipschitz conditions on both  diffusion and  jump coefficients. Moreover, our technical calculations are more refined than those of \cite{hutzenthaler2012, sabanis2013} in that we develop new techniques to overcome the challenges arising due to jumps. In addition, explicit formulations of the tamed Euler schemes are presented at the end of Section 3 for the case of SDEs driven by L\'evy noise which have non-random coefficients.

To the best of the authors' knowledge, the results obtained in this article are the first for the case of super-linear coefficients in this area. Moreover, the techniques developed here allow for further investigation of convergence properties of higher order explicit numerical schemes for SDEs driven by L\'evy noise with super-linear coefficients.

As an application of our approach which considers random coefficients, we also present in this article uniform $\mathcal{L}^q$ convergence results of explicit tamed Euler schemes for the case of stochastic delay differential equations (SDDEs) driven by L\'evy noise. The link between delay equations and random coefficients utilises ideas from \cite{gs2012}.  The aforementioned results are derived under the assumptions of one-sided local Lipschitz condition on drift and local Lipschitz conditions on both diffusion and jump coefficients with respect to non-delay variables, whereas these coefficients are only asked to be continuous with respect to arguments corresponding to delay variables.   It is worth mentioning here that our approach allows one to use our schemes to approximate SDDEs with jumps when drift coefficients can have super-linear growth in both delay and non-delay arguments.  Thus, the proposed tamed Euler schemes  provide significant improvements over the existing results available on numerical techniques of SDDEs, for example,  \cite{chenggui2013, kumar2014}. It should also be noted that, by adopting the approach of \cite{gs2012},  we prove the existence of a unique solution to the SDDEs driven by L\'evy noise under more relaxed conditions than those existing in the literature, for example,  \cite{jacob2009} whereby we ask for the local Lipschitz continuity only with respect to the non-delay variables.

Finally, rate of convergence results are obtained (which are in agreement with classical literature) when the local Lipschitz continuity assumptions are replaced by global and, in addition, the drift coefficients satisfy polynomial Lipschitz continuity. Similar results are also obtained for delay equations when the following assumptions hold - (a) drift coefficients satisfy  one-sided Lipschitz and polynomial Lipschitz conditions in non-delay variables whereas polynomial Lipschitz conditions in delay variables and (b) diffusion and jump coefficients satisfy Lipschitz conditions in non-delay variables whereas polynomial Lipschitz conditions in delay variables. This finding is itself a significant improvement over recent results in the area, see for example  \cite{chenggui2013} and references therein.

We conclude this section by introducing some basic notation. For a vector  $x\in \mathbb{R}^d$, we write $|x|$ for its Euclidean norm and for a $d\times m$ matrix $\sigma$, we write $|\sigma|$ for its Hilbert-Schmidt norm and $\sigma^*$ for its transpose. Also for $x,y \in \mathbb{R}^d$, $xy$ denotes the inner product of these two vectors. Further, the indicator function of a set $A$ is denoted by $I_A$, whereas $[x]$ stands for the integer part of a real number $x$. Let $\mathscr{P}$ be the predictable sigma-algebra on $\Omega \times \mathbb{R}_+ $ and $\mathscr{B}(V)$, the sigma-algebra of Borel sets of a topological space $V$. Also, let $T>0$ be  fixed and $\mathbb{L}^p$ denote the set of non-negative measurable functions $g$ on $[0,T]$, such that $\int_0^T|g_t|^pdt < \infty$. Finally, for a random variable $X$, the notation $X \in \mathcal{L}^p$ means $E|X|^p < \infty$.

\section{SDE with Random Coefficients Driven by L\'evy Noise} Let us assume that  $(\Omega, \{\mathscr{F}_t\}_{t \geq 0}, \mathscr{F}, P)$  denotes a probability space equipped with a filtration $\{\mathscr{F}_t\}_{t \geq 0}$ which is assumed to satisfy the usual conditions, i.e. $\mathscr{F}_0$ contains all $P$-null sets  and the filtration is  right continuous. Let  $w$ be an $\mathbb{R}^m-$valued  standard Wiener process. Further assume that $(Z, \mathscr{Z}, \nu)$ is a $\sigma-$finite measure space and  $N(dt,dz)$ is a Poisson random measure defined on $(Z, \mathscr{Z}, \nu)$ with intensity $\nu \not\equiv 0$ (in case $\nu \equiv 0$, one could consult \cite{sabanis2013}). Also let the compensated poisson random measure be denoted by $\tilde N(dt,dz):=N(dt,dz)-\nu(dz)dt$.

Let $b_t(x)$ and $\sigma_t(x)$ be  $\mathscr P \otimes \mathscr{B}(\mathbb R^d)$-measurable functions which respectively take values in $\mathbb R^d$ and $\mathbb R^{d \times m}$. Further assume that $\gamma_t(x,z)$ is  $\mathscr P \otimes \mathscr{B}(\mathbb R^d)\otimes \mathscr{Z}$-measurable function which takes values in $\mathbb R^{d}$. Also assume that $t_0$ and $t_1$ are fixed  constants satisfying $0 \leq t_0 <t_1 \leq T$.

We consider the following SDE
\begin{align}     \label{eq:sdewrc}
dx_t=b_t(x_t)dt+\sigma_t(x_t)dw_t+\int_{Z}\gamma_t(x_t,z) \tilde N(dt,dz)
\end{align}
almost surely for any $t \in [t_0,t_1]$ with initial value $x_{t_0}$ which is an $\mathscr{F}_{t_0}$-measurable random variable in $\mathbb R^d$.

\begin{remark}
For notational convenience, we write $x_t$ instead of $x_{t-}$ on the right hand side of the above equation. This does not cause any problem since the compensators of the martingales driving the equation are continuous. This notational convention shall be adopted throughout this article.
\end{remark}
\begin{remark}
In this article, we use $K>0$ to denote a generic constant which varies at different occurrences.
\end{remark}

The proof for the following lemma can be found in \cite{Mik}.
\begin{lemma}\label{lem:maximal:inequality}
Let $r \geq 2$. There exists a constant $K$, depending only on $r$, such that for every real-valued, $\mathscr{P} \otimes \mathscr Z-$measurable function $g$ satisfying
$$
\int_0^T\int_Z |g_t(z)|^2\nu(dz) dt < \infty
$$
almost surely, the following estimate holds,
\begin{align}
E\sup_{0 \leq t \leq T} \Big|\int_0^t \int_Z g_s(z)& \tilde{N}(ds,dz)\Big|^r \leq K E\Big(\int_0^T \int_Z|g_t(z)|^2\nu(dz)dt\Big)^{r/2} +K E\int_0^T \int_Z |g_t(z)|^r \nu(dz)dt. \label{eq:aobe}
\end{align}
It is  known  that if $1\leq r \leq 2$, then the second term in \eqref{eq:aobe} can be dropped.
\end{lemma}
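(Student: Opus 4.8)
\emph{Proof proposal.} Write $M_t:=\int_0^t\int_Z g_s(z)\,\tilde N(ds,dz)$; this is a purely discontinuous local martingale with quadratic variation $[M]_T=\sum_{s\le T}|\Delta M_s|^2=\int_0^T\int_Z|g_s(z)|^2\,N(ds,dz)$. The plan is to first reduce, by truncation, to a setting in which $M$ is a genuine martingale and every integral below is finite, and then to combine the Burkholder--Davis--Gundy (BDG) inequality with an induction on the exponent. For the reduction, replace $g$ by $g_s(z)\,I_{\{|g_s(z)|\le m\}}\,I_{\{s\le\rho_n\}}$, where $\rho_n:=\inf\{t\ge0:\int_0^t\int_Z|g_s(z)|^2\nu(dz)ds\ge n\}\wedge T$; by the standing hypothesis $\rho_n=T$ eventually a.s., so \eqref{eq:aobe} for general $g$ follows from \eqref{eq:aobe} for the bounded integrands vanishing for $t>\rho_n$ by monotone convergence on the right-hand side and Fatou's lemma on the left, using the standard fact that $\sup_{t\le T}|M_t^{(m,n)}|\to\sup_{t\le T}|M_t|$ in probability as $m,n\to\infty$. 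Hence we may assume $g$ bounded with $g_t\equiv0$ for $t>\rho_n$.

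BDG for c\`adl\`ag martingales, valid for every $p>0$, gives $E\sup_{t\le T}|M_t|^p\le K_p\,E[M]_T^{p/2}$. For $1\le r\le2$ this settles the last assertion of the lemma, since by concavity of $x\mapsto x^{r/2}$ and Jensen's inequality,
\[
E\sup_{t\le T}|M_t|^r\le K\,E[M]_T^{r/2}\le K\,(E[M]_T)^{r/2}=K\Big(E\int_0^T\int_Z|g_t(z)|^2\nu(dz)dt\Big)^{r/2}.
\]

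For $r\ge2$ I would induct over the dyadic ranges $r\in[2^k,2^{k+1}]$, $k\ge1$. In every case, BDG, the splitting $[M]_T=A_T+\langle M\rangle_T$ with $A_t:=\int_0^t\int_Z|g_s(z)|^2\tilde N(ds,dz)$ and $\langle M\rangle_T:=\int_0^T\int_Z|g_s(z)|^2\nu(dz)ds$, and $(a+b)^{r/2}\le2^{r/2-1}(a^{r/2}+b^{r/2})$ bound the left-hand side of \eqref{eq:aobe} by $K\,E|A_T|^{r/2}+K\,E\langle M\rangle_T^{r/2}$; the second summand is precisely the first term of \eqref{eq:aobe}, so it remains to estimate $E|A_T|^{r/2}$, the $(r/2)$-th moment of a jump integral with integrand $|g|^2$. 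In the base case $r\in[2,4]$ one has $r/4\le1$, so applying BDG to $A$ and the subadditivity $(u+v)^{r/4}\le u^{r/4}+v^{r/4}$,
\begin{multline*}
E|A_T|^{r/2}\le K\,E\Big(\int_0^T\int_Z|g_s(z)|^4\,N(ds,dz)\Big)^{r/4}\\
\le K\,E\int_0^T\int_Z|g_s(z)|^r\,N(ds,dz)=K\,E\int_0^T\int_Z|g_t(z)|^r\nu(dz)\,dt,
\end{multline*}
which is the second term of \eqref{eq:aobe}. In the inductive step $r\in[2^{k+1},2^{k+2}]$, $k\ge1$, one has $r/2\in[2^k,2^{k+1}]$, and the induction hypothesis applied to $A$ with exponent $r/2$ bounds $E|A_T|^{r/2}$ by $K\,E\big(\int_0^T\int_Z|g_s(z)|^4\nu(dz)ds\big)^{r/4}+K\,E\int_0^T\int_Z|g_t(z)|^r\nu(dz)dt$; the first of these, after interpolating pathwise the $L^4$-norm of $g$ over $\nu(dz)\,ds$ between its $L^2$- and $L^r$-norms (log-convexity) and applying Young's inequality, is bounded by $K\big(E\int_0^T\int_Z|g_t(z)|^2\nu(dz)dt\big)^{r/2}+K\,E\int_0^T\int_Z|g_t(z)|^r\nu(dz)dt$. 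Since the induction terminates after $\lceil\log_2 r\rceil$ steps, the constant depends only on $r$, and \eqref{eq:aobe} follows.

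The steps demanding genuine care are the truncation/limiting argument of the first paragraph (which legitimizes BDG and the passage to the limit) and checking that the constant produced by the induction depends on $r$ alone; the rest is elementary bookkeeping. An alternative avoiding the recursion --- and likely the route taken in \cite{Mik} --- applies It\^o's formula to $|M_t|^r$ and estimates the resulting compensator by the pointwise inequality $\big||a+b|^r-|a|^r-r|a|^{r-2}ab\big|\le K(|a|^{r-2}|b|^2+|b|^r)$, the Young-type bound $|a|^{r-2}|b|^2\le\varepsilon|a|^r+K_\varepsilon|b|^r$, the $\mathcal{L}^2$-isometry for the martingale term, and a Gronwall argument after taking suprema and expectations.
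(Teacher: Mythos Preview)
The paper does not actually prove this lemma; it cites \cite{Mik} and moves on. So there is no in-paper argument to compare against. Your route for $r\ge 2$ --- BDG, the splitting $[M]_T=A_T+\langle M\rangle_T$ with $A_t=\int_0^t\int_Z|g|^2\,\tilde N(ds,dz)$, and induction over dyadic exponent ranges closed by an $L^2/L^r$ interpolation and Young's inequality --- is a correct and standard derivation of this Kunita--Novikov type bound; the bookkeeping and the dependence of the constant on $r$ alone are fine.

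There is, however, a real slip in your $1\le r\le 2$ paragraph. Jensen applied to the concave map $x\mapsto x^{r/2}$ gives
\[
E[M]_T^{r/2}\ \le\ (E[M]_T)^{r/2}\ =\ \Big(E\int_0^T\!\!\int_Z|g_t(z)|^2\nu(dz)dt\Big)^{r/2},
\]
which is \emph{larger} than the right-hand side claimed in the lemma, namely $E\big(\int_0^T\int_Z|g_t(z)|^2\nu(dz)dt\big)^{r/2}=E\langle M\rangle_T^{r/2}$. So you have only established a weaker inequality, and the difference matters: the paper repeatedly invokes the lemma with $r=1$ (for instance in the estimates of $C_4$, $E_4$, $E_9$, $F_8$, $G_8$) in the precise form $K\,E\big(\int\int|\cdot|^2\nu\,ds\big)^{1/2}$. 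The repair is short: since $[M]-\langle M\rangle$ is a local martingale with $\langle M\rangle$ predictable increasing, Lenglart's domination inequality yields $E[M]_T^{r/2}\le K_r\,E\langle M\rangle_T^{r/2}$ for every $r\in(0,2)$, and combining this with BDG gives the statement as written. Replace the Jensen step by this and your proof is complete.
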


\subsection{Existence and Uniqueness} \label{sec:exis:uniq:rc}
Let  $\mathcal A$ denotes the class of non-negative  predictable processes $L:=(L_t)_{t \in [0,T]}$ such that
$$
\int_0^T L_t dt < \infty
$$
for almost every $\omega \in \Omega$.

For the purpose of this section, the set of assumptions are listed  below.

\begin{assumptionA} \label{as:growth:rc:eu}
There exists an $\mathcal{M} \in \mathcal A$ such that
\begin{align}
 x b_t(x)+|\sigma_t(x)|^2 + \int_Z|\gamma_t(x, z)|^2 \nu(dz)& \leq \mathcal{M}_t(1+|x|^2) \notag
\end{align}
almost surely for any $t \in [t_0, t_1]$ and $x \in \mathbb{R}^d$.
\end{assumptionA}

\begin{assumptionA} \label{as:locallip:rc:eu}
For every $R>0$, there exists an $\mathcal{M}(R) \in \mathcal{A}$ such that,
\begin{align}
  (x-\bar x)  \, (b_t(x)-b_t(\bar x))+ |\sigma_t(x)-\sigma_t(\bar x)|^2   & +\int_Z |\gamma_t(x,z)-\gamma_t(\bar x,z)|^2\nu(dz)  \leq \mathcal{M}_t(R)|x-\bar x|^2 \notag
\end{align}
almost surely for any $t \in [t_0, t_1]$ whenever $|x|,|\bar x| \leq R$.
\end{assumptionA}

\begin{assumptionA}  \label{as:continuity:rc:eu}
For any $t \in [t_0,t_1]$ and $\omega \in \Omega$, the function $b_t(x)$ is  continuous in $x\in \mathbb{R}^d$.
\end{assumptionA}

The proof for the following theorem can be found in \cite{gyongy1980}.
\begin{theorem}        \label{thm:eu:rc}
Let Assumptions  A-1 to A-3 be satisfied. Then, there exists a unique solution to SDE \eqref{eq:sdewrc}.
\end{theorem}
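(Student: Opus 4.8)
The plan is to establish existence and uniqueness by a standard localization argument combined with an a priori estimate. First I would prove uniqueness. Suppose $x_t$ and $\bar x_t$ are two solutions with the same initial value $x_{t_0}$. For $R>0$ introduce the stopping time $\tau_R := \inf\{t \geq t_0 : |x_t| \vee |\bar x_t| \geq R\} \wedge t_1$. Applying It\^o's formula to $|x_{t\wedge\tau_R} - \bar x_{t\wedge\tau_R}|^2$ and taking expectations, the martingale terms from $dw$ and $\tilde N$ vanish, and the remaining drift, quadratic-variation and jump contributions are controlled, on $\{t \leq \tau_R\}$, by Assumption A-\ref{as:locallip:rc:eu}: one obtains
\begin{align}
E|x_{t\wedge\tau_R} - \bar x_{t\wedge\tau_R}|^2 \leq C\, E\int_{t_0}^{t} \mathcal{M}_s(R) |x_{s\wedge\tau_R} - \bar x_{s\wedge\tau_R}|^2\, ds. \notag
\end{align}
Since $\mathcal M(R) \in \mathcal A$, i.e. $\int_{t_0}^{t_1}\mathcal M_s(R)\,ds < \infty$ a.s., a stochastic (random-coefficient) Gronwall argument — or, after a further localization in $\omega$ to make $\int \mathcal M_s(R)\,ds$ bounded, the classical Gronwall lemma — forces $x_{t\wedge\tau_R} = \bar x_{t\wedge\tau_R}$ for all $t$. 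Letting $R\to\infty$ and using that solutions are a.s. finite on $[t_0,t_1]$ (so $\tau_R \to t_1$), uniqueness follows.

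For existence I would first handle the globally Lipschitz case and then remove the restriction by truncation. Concretely, for each $R>0$ let $\pi_R:\mathbb R^d\to\mathbb R^d$ be a smooth truncation with $\pi_R(x)=x$ for $|x|\leq R$, and set $b_t^R(x):=b_t(\pi_R(x))$, $\sigma_t^R(x):=\sigma_t(\pi_R(x))$, $\gamma_t^R(x,z):=\gamma_t(\pi_R(x),z)$. By Assumption A-\ref{as:locallip:rc:eu} these truncated coefficients are globally Lipschitz in $x$ with a random integrable coefficient, and by Assumption A-\ref{as:growth:rc:eu} they satisfy a linear growth bound; hence the standard Picard iteration (in an appropriate $\mathcal L^2$ space, using Lemma~\ref{lem:maximal:inequality} to bound the jump martingale) produces a unique solution $x^R$ of the SDE with coefficients $b^R,\sigma^R,\gamma^R$. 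Define $\tau_R := \inf\{t \geq t_0 : |x^R_t|\geq R\}\wedge t_1$. On $[t_0,\tau_R]$ the truncated and untruncated coefficients agree, so by the uniqueness argument above the solutions $x^R$ are consistent: $x^R = x^{R'}$ on $[t_0, \tau_R\wedge\tau_{R'}]$ for $R\leq R'$, and $\tau_R$ is nondecreasing. This lets me paste the $x^R$ together into a process $x$ defined on $[t_0, \tau)$ where $\tau := \lim_{R\to\infty}\tau_R$, solving \eqref{eq:sdewrc} there.

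It remains to show no explosion, i.e. $\tau = t_1$ a.s.; this is where Assumption A-\ref{as:growth:rc:eu} does its work. Apply It\^o's formula to $\log(1+|x_t^R|^2)$ (the logarithm is the natural choice because $\mathcal M$ is only integrable, not bounded): the drift and diffusion contributions are $\leq C\mathcal M_t$ by the coercivity/growth bound, and the jump term, after the elementary inequality $\log(1+|x+\gamma|^2)-\log(1+|x|^2)\leq C|\gamma|^2/(1+|x|^2)$ together with the compensator, is again dominated by $C\mathcal M_t$; the stochastic integrals are local martingales. Hence $E\log(1+|x^R_{t\wedge\tau_R}|^2) \leq E\log(1+|x_{t_0}|^2) + C\,E\int_{t_0}^{t_1}\mathcal M_s\,ds$, a bound uniform in $R$. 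Since $|x^R_{\tau_R}| = R$ on $\{\tau_R < t_1\}$, Chebyshev gives $P(\tau_R < t_1) \leq \big(E\log(1+|x_{t_0}|^2)+C E\int\mathcal M\big)/\log(1+R^2) \to 0$, so $\tau = t_1$ a.s. and $x$ is a global solution. (Continuity of $b$ in $x$, Assumption A-\ref{as:continuity:rc:eu}, is what legitimizes the truncation/Picard step when the local Lipschitz constant is only $\mathcal M_t(R)$; it guarantees $b^R_t(\cdot)$ is genuinely a well-behaved coefficient.)

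The main obstacle is the random, merely integrable nature of $\mathcal M$ and $\mathcal M(R)$: one cannot invoke the classical deterministic Gronwall and non-explosion arguments directly, and must instead either work pathwise after an auxiliary localization in $\omega$ (e.g. stopping when $\int_{t_0}^t\mathcal M_s(R)\,ds$ or $\int_{t_0}^t\mathcal M_s\,ds$ exceeds a level $n$, proving the estimates there, then sending $n\to\infty$), or use a version of Gronwall's lemma adapted to random coefficients. Carefully threading this localization through both the uniqueness estimate and the non-explosion estimate — while keeping the jump terms under control via Lemma~\ref{lem:maximal:inequality} — is the technical heart of the argument; everything else is routine and is precisely the content of the reference \cite{gyongy1980}.
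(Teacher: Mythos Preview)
The paper does not actually prove this theorem; it simply refers to Gy\"ongy--Krylov \cite{gyongy1980}. Your outline of uniqueness via It\^o's formula plus localized Gronwall, and of non-explosion via $\log(1+|x|^2)$ together with a further localization in $\omega$ to cope with $\mathcal M\in\mathcal A$ being only almost surely integrable, is standard and essentially what one finds in that reference.

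There is, however, a genuine gap in your existence step. Assumption A-\ref{as:locallip:rc:eu} does \emph{not} give local Lipschitz continuity of $b$: it only gives the one-sided (monotonicity) bound $(x-\bar x)(b_t(x)-b_t(\bar x))\le \mathcal M_t(R)|x-\bar x|^2$, together with genuine Lipschitz bounds on $\sigma$ and $\gamma$. Consequently, after truncation your $b^R$ is continuous (by A-\ref{as:continuity:rc:eu}) and monotone, but not Lipschitz, and the Picard iteration you invoke does not contract. This is exactly why the separate continuity hypothesis A-\ref{as:continuity:rc:eu} is imposed: with full local Lipschitz on $b$ it would be redundant. The argument in \cite{gyongy1980} bridges this by a further layer of approximation --- one mollifies (or otherwise regularizes) the truncated drift to obtain a genuinely Lipschitz $b^{R,\varepsilon}$, solves that equation by Picard, and then passes $\varepsilon\to 0$ using the monotonicity of $b^R$ to control $|x^{R,\varepsilon}-x^{R,\varepsilon'}|^2$ and the continuity of $b$ to identify the limit as a solution. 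Your sketch skips this step entirely; without it, existence under the stated hypotheses is not established.
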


\subsection{Moment Bounds}  \label{sec:mb:rc}
We make the following assumptions on the coefficients of  SDE \eqref{eq:sdewrc}.
\begin{assumptionA} \label{as:initial:rc:mb}
For a fixed $p \geq 2$,  $E|x_{t_0}|^p < \infty$.
\end{assumptionA}
\begin{assumptionA} \label{as:growth:rc:mb}
There exist a constant $L>0$ and a non-negative random variable $M$ satisfying $EM^\frac{p}{2}< \infty$ such that
\begin{align}
  x b_t(x) \vee |\sigma_t(x)|^2 \vee \int_Z|\gamma_t(x, z)|^2 \nu(dz)\leq L(M+|x|^2) \notag
\end{align}
almost surely for any $t \in [t_0, t_1]$ and $x\in \mathbb{R}^d$.
\end{assumptionA}
\begin{assumptionA}   \label{as:growth:p:rc:mb}
There exist a constant $L>0$ and a non-negative random variable $M'$ satisfying $EM' < \infty$ such that
$$
\int_Z |\gamma_t(x,z)|^p \nu(dz) \leq L(M'+|x|^p)
$$
almost surely for any  $t \in [t_0,t_1]$ and $x \in \mathbb{R}^d$.
\end{assumptionA}

The following is probably well-known. However, the proof is provided for the sake of completeness and for the justification of finiteness of the right hand side when applying Gronwall's lemma, something that is missing from the existing literature.
\begin{lemma} \label{lem:mb:rc}
Let Assumptions A-2 to   A-6 be satisfied. Then there exists a unique solution $(x_t)_{t\in [t_0,t_1]}$ of SDE \eqref{eq:sdewrc} and the following estimate holds
$$
E\sup_{t_0 \leq t \leq t_1}|x_t|^p \leq K,
$$
with $K:=K(t_0, t_1, L, p, E|x_{t_0}|^p, EM^\frac{p}{2}, EM')$.
\end{lemma}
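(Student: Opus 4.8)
The plan is to obtain the $\mathcal{L}^p$ moment bound via a stopping-time argument combined with It\^o's formula applied to $|x_t|^p$, and then Gronwall's lemma; the existence and uniqueness part follows from Theorem~\ref{thm:eu:rc} once we observe that Assumptions A-2, A-5 imply A-1 and A-3. Indeed, A-5 gives $xb_t(x) \leq L(M+|x|^2)$ and $|\sigma_t(x)|^2 + \int_Z|\gamma_t(x,z)|^2\nu(dz) \leq 2L(M+|x|^2)$, so setting $\mathcal{M}_t := 3L(M\vee 1)$ (a constant-in-time, hence predictable, process in $\mathcal A$ since $EM^{p/2}<\infty$ implies $M<\infty$ a.s.) verifies A-1; A-2 is A-2 and A-3 is implied by the local Lipschitz continuity of $b$ in A-2 together with A-5. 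Hence a unique solution $(x_t)$ exists on $[t_0,t_1]$, and it remains to bound its $p$-th moment.

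For the moment bound, first I would introduce the stopping times $\tau_R := \inf\{t \geq t_0 : |x_t| \geq R\} \wedge t_1$, so that the stopped process is bounded and all stochastic integrals below are genuine martingales. Applying It\^o's formula for semimartingales with jumps to $f(x) = |x|^p$ on $[t_0, t\wedge\tau_R]$, one obtains
\begin{align}
|x_{t\wedge\tau_R}|^p &= |x_{t_0}|^p + p\int_{t_0}^{t\wedge\tau_R}|x_s|^{p-2}x_s b_s(x_s)\,ds + \text{(martingale terms)} \notag \\
&\quad + \frac{p}{2}\int_{t_0}^{t\wedge\tau_R}|x_s|^{p-2}\big(|\sigma_s(x_s)|^2 + (p-2)|x_s|^{-2}|\sigma_s^*(x_s)x_s|^2\big)\,ds \notag \\
&\quad + \int_{t_0}^{t\wedge\tau_R}\!\!\int_Z\big(|x_s+\gamma_s(x_s,z)|^p - |x_s|^p - p|x_s|^{p-2}x_s\gamma_s(x_s,z)\big)\nu(dz)\,ds. \notag
\end{align}
The martingale terms (the $dw$ integral and the $\tilde N$ integral) have zero expectation after stopping. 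For the drift and diffusion drift terms, A-5 gives a bound of the form $|x_s|^{p-2}(M+|x_s|^2) \leq M^{p/2} + C|x_s|^p$ after Young's inequality. The key estimate is for the jump compensator term: using the elementary inequality $|a+b|^p - |a|^p - p|a|^{p-2}ab \leq C(p)(|a|^{p-2}|b|^2 + |b|^p)$ valid for $p\geq 2$, the integrand is controlled by $C(|x_s|^{p-2}|\gamma_s(x_s,z)|^2 + |\gamma_s(x_s,z)|^p)$; integrating against $\nu(dz)$ and invoking A-5 for the first piece and A-6 for the second yields a bound of the form $C(M^{p/2} + M' + |x_s|^p)$.

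Taking suprema over $t$ first and then expectations — here one must be slightly careful and handle the martingale terms with the Burkholder–Davis–Gundy inequality and Lemma~\ref{lem:maximal:inequality} before invoking their vanishing expectation, or alternatively take expectation first and sup afterward using that $E|x_{t\wedge\tau_R}|^p$ is what we bound — we arrive at
\[
E\sup_{t_0\leq s\leq t}|x_{s\wedge\tau_R}|^p \leq C\big(E|x_{t_0}|^p + (t_1-t_0)(EM^{p/2} + EM')\big) + C\int_{t_0}^t E\sup_{t_0\leq u \leq s}|x_{u\wedge\tau_R}|^p\,ds.
\]
The crucial point, and the one the lemma's preamble flags as missing in the literature, is that the right-hand side is \emph{finite} before applying Gronwall: the stopping at $\tau_R$ makes $\sup|x_{s\wedge\tau_R}|\leq R \vee |x_{t_0}|$, so $E\sup|x_{s\wedge\tau_R}|^p \leq R^p + E|x_{t_0}|^p < \infty$ by A-4, legitimising Gronwall's lemma. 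Gronwall then gives $E\sup_{t_0\leq s\leq t_1}|x_{s\wedge\tau_R}|^p \leq K$ with $K$ independent of $R$, and Fatou's lemma as $R\to\infty$ (noting $\tau_R \to t_1$ a.s. since $x$ has c\`adl\`ag paths and hence is locally bounded) yields $E\sup_{t_0\leq t\leq t_1}|x_t|^p \leq K$. The main obstacle is the careful treatment of the jump term in It\^o's formula — getting the right elementary inequality for $|a+b|^p$ and ensuring the $\nu(dz)$-integrability needed to split it, which is precisely where A-6 (the $p$-th moment growth of $\gamma$) is indispensable and where the $r=p$ case of Lemma~\ref{lem:maximal:inequality} enters for the martingale part.
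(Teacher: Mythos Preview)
Your approach is essentially the paper's: It\^o's formula for $|x|^p$, the Taylor-remainder inequality $|a+b|^p-|a|^p-p|a|^{p-2}ab\le C(|a|^{p-2}|b|^2+|b|^p)$ for the jump term, BDG and Lemma~\ref{lem:maximal:inequality} for the martingale suprema, stopping to secure finiteness, then Gronwall and Fatou. Two small corrections are worth flagging. First, A-2 is only a \emph{one-sided} local Lipschitz condition on $b$, so it does not by itself yield continuity of $b$; but A-3 is already among the hypotheses, so this is harmless. Second---and this matters because it is exactly the finiteness point you and the paper both highlight---your bound $\sup_{s}|x_{s\wedge\tau_R}|\le R\vee|x_{t_0}|$ is not correct in the jump setting: the process can overshoot $R$ by an arbitrary amount at the stopping time itself. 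The paper's remedy is to observe instead that $|x_{t-}|\le R$ for $t_0\le t\le\pi_R$, so the \emph{integrands} in every stochastic integral are bounded; this makes each auxiliary term finite, hence $E\sup_{t_0\le t\le u\wedge\pi_R}|x_t|^p<\infty$ a posteriori, and then one uses $E\sup|x_{t-}|^p\le E\sup|x_t|^p$ to absorb the residual $\frac12 E\sup|x_{t-}|^p$ coming from the BDG/Young estimates before invoking Gronwall.
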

\begin{proof}
The existence and uniqueness of solution to SDE \eqref{eq:sdewrc} follows immediately from Theorem \ref{thm:eu:rc} by noting that due to Assumption A-5, Assumption A-1 is satisfied.

Let us first define the stopping time $\pi_R:= \inf\{t \geq t_0: |x_t| >R\}\wedge t_1$, and notice that $|x_{t-}| \leq R$ for $ t_0 \leq t \leq \pi_R$.  By  It\^{o}'s formula,

\begin{align} \label{eq:ito:rc}
|x_t|^p &= |x_{t_0}|^p+ p \int_{t_0}^{t} |x_s|^{p-2} x_s b_s( x_s) ds + p\int_{t_0}^{t} |x_s|^{p-2} x_s \sigma_s( x_s)  dw_s  \notag
\\
& + \frac{p(p-2)}{2} \int_{t_0}^{t} |x_s|^{p-4}|\sigma_s^{*}(x_s) x_s|^2ds +\frac{p}{2}\int_{t_0}^{t} |x_s|^{p-2}|\sigma_s(x_s)|^2 ds  \notag
\\
&+  p\int_{t_0}^{t} \int_{Z} |x_s|^{p-2} x_{s} \gamma_s( x_{s},z)    \tilde N(ds,dz) \notag
\\
+\int_{t_0}^{t} &\int_{Z}\{ |x_{s}+\gamma_s( x_{s},z)|^p-|x_{s}|^p-p|x_{s}|^{p-2} x_{s}\gamma_s( x_{s},z) \}N(ds,dz)
\end{align}
almost surely for any $t \in [t_0,t_1]$. By virtue of Assumption A-5 and Young's inequality, one can estimate the second, fourth and fifth terms of equation \eqref{eq:ito:rc} by
\begin{align} \label{eq:2+4+5}
K M^\frac{p}{2}+K \int_{t_0}^{t} |x_s|^{p} ds.
\end{align}
Further, since the map $y \to |y|^p$ is of class $C^2$, by the formula for the remainder, for any $y_1, y_2 \in \mathbb{R}^d$, one gets
\begin{align}
|y_1+y_2|^p-|y_1|^p-p|y_1|^{p-2}y_1y_2& \leq K\int_0^1| y_1+q y_2|^{p-2}|y_2|^2 dq  \notag
\\
& \leq K(|y_1|^{p-2}|y_2|^2+|y_2|^p). \label{eq:y1y2}
\end{align}
Hence the last term of \eqref{eq:ito:rc} can be estimated by
\begin{align} \label{eq:last}
K\int_{t_0}^t \int_Z \{|x_{s}|^{p-2}|\gamma_s( x_{s},z)|^2+|\gamma_s( x_{s},z)|^p \} \ N(ds,dz).
\end{align}
One substitutes the estimates from \eqref{eq:2+4+5} and \eqref{eq:last} in equation \eqref{eq:ito:rc} which by taking suprema over $[t_0,u \wedge \pi_R]$ for $u \in [t_0, t_1]$ and expectations gives
\begin{align} \label{eq:C1+C5}
& E\sup_{t_0 \leq t \leq u \wedge \pi_R }|x_t|^p  \leq  E|x_{t_0}|^p+ K EM^\frac{p}{2}+ K E\int_{t_0}^{u \wedge \pi_R}|x_s|^p ds \notag
\\
&+  pE\sup_{t_0 \leq t \leq  u \wedge \pi_R }\Big|\int_{t_0}^{t} |x_s|^{p-2} x_s \sigma_s( x_s)  dw_s\Big|  \notag
\\
& + pE\sup_{t_0 \leq t \leq  u \wedge \pi_R }\Big|\int_{t_0}^{t} \int_{Z} |x_s|^{p-2} x_{s} \gamma_s(x_{s},z)   \tilde N(ds,dz)\Big| \notag
\\
&+  KE\int_{t_0}^{u \wedge \pi_R} \int_Z  \{|x_{s}|^{p-2} |\gamma_s( x_{s},z)|^2+|\gamma_s( x_{s},z)|^p \}\ N(ds,dz) \notag
\\
&=:C_1+C_2+C_3+C_4+C_5.
\end{align}
Here $C_1:=E|x_{t_0}|^p+K EM^\frac{p}{2}$.  By the Burkholder-Davis-Gundy inequality, $C_3$ can be estimated as
\begin{align*}
C_3 & = pE\sup_{t_0 \leq t < u \wedge \pi_R }\Big|\int_{t_0}^{t} |x_{s-}|^{p-2} x_{s-} \sigma_s( x_{s-})  dw_s\Big|
\\
& \leq KE\sup_{t_0 \leq t \leq  u\wedge \pi_R}|x_{t-}|^{p-1}\left(\int_{t_0}^{u \wedge \pi_R} |\sigma_s(x_{s-})|^2ds\right)^{1/2}
\end{align*}
which on the application of Young's inequality gives
\begin{align*}
C_3  \leq \frac{1}{4} E\sup_{t_0 \leq t \leq  u\wedge \pi_R}|x_{t-}|^p+K E\left(\int_{t_0}^{u \wedge \pi_R} |\sigma_s(x_s)|^2ds\right)^{p/2}
\end{align*}
and then due to H\"{o}lder's inequality and Assumption A-5, one has
\begin{align} \label{eq:C3}
C_3  \le \frac{1}{4} E\sup_{t_0 \leq t \leq  u\wedge \pi_R}|x_{t-}|^p+KEM^\frac{p}{2}+K E\int_{t_0}^{u \wedge \pi_R} |x_r|^pds< \infty.
\end{align}

To estimate $C_4$, one uses Lemma \ref{lem:maximal:inequality} to write
\begin{align*}
C_4 &:=pE\sup_{t_0 \leq t \leq  u \wedge \pi_R }\Big|\int_{t_0}^{t} \int_{Z} |x_{s-}|^{p-2} x_{s-} \gamma_s(x_{s-},z)   \tilde N(ds,dz)\Big|
\\
& \leq K E\Big(\int_{t_0}^{u \wedge \pi_R} \int_{Z} |x_{s-}|^{2p-2} |\gamma_s(x_{s-},z)|^2   \nu(dz) ds \Big)^\frac{1}{2}
\\
& \leq K E\sup_{t_0 \leq t \leq  u\wedge \pi_R}|x_{t-}|^{p-1}\Big(\int_{t_0}^{u \wedge \pi_R} \int_{Z}|\gamma_s(x_{s},z)|^2   \nu(dz) ds \Big)^\frac{1}{2}
\end{align*}
which due to Young's inequality, Assumption A-5 and  H\"{o}lder's inequality implies
\begin{align} \label{eq:C4}
C_4   \leq \frac{1}{4} E\sup_{t_0 \leq t \leq  u\wedge \pi_R}|x_{t-}|^p+KEM^\frac{p}{2}+K E\int_{t_0}^{u \wedge \pi_R} |x_r|^pds< \infty.
\end{align}
For $C_5$, by Assumptions A-5, A-6 and Young's inequality,
\begin{align} \label{eq:C5}
C_5&:= KE\int_{t_0}^{u \wedge \pi_R}  \int_Z \left(|x_{s}|^{p-2} |\gamma_s( x_s,z)|^2+|\gamma_s( x_s,z)|^p \right)\ \nu (dz)ds \notag
\\
& \leq  KE\int_{t_0}^{u \wedge \pi_R}   \left\{|x_{s}|^{p-2}(M+|x_{s}|^2)+M'+|x_s|^p \right\}ds \notag
\\
& \leq K EM^\frac{p}{2}+K EM' +EK\int_{t_0}^{u \wedge \pi_R} |x_r|^pds < \infty.
\end{align}
By substituting the estimates from \eqref{eq:C3}-\eqref{eq:C5} in \eqref{eq:C1+C5}, one has
\begin{align}                   \label{eq: before Gronwall}
E\sup_{t_0 \leq t \leq  u \wedge \pi_R }|x_t|^p  \leq K+\frac{1}{2} E\sup_{t_0 \leq t \leq  u\wedge \pi_R}|x_{t-}|^p+K E\int_{t_0}^{u \wedge \pi_R} |x_r|^pds  < \infty
\end{align}
for any $u \in [t_0, t_1]$. In particular we obtain
$$
E\sup_{t_0 \leq t \leq  t_1 \wedge \pi_R }|x_t|^p< \infty.
$$
Since it holds that
$$
E\sup_{t_0 \leq t \leq  u \wedge \pi_R }|x_{t-}|^p \leq E\sup_{t_0 \leq t \leq  u \wedge \pi_R }|x_t|^p,
$$
by rearrenging in \eqref{eq: before Gronwall}, we obtain
\begin{align} \nonumber
E\sup_{t_0 \leq t \leq  u \wedge \pi_R }|x_t|^p  &\leq K+K E\int_{t_0}^{u \wedge \pi_R} |x_r|^pds  \\
& \leq K +E\int_{t_0}^u \sup_{t_0 \leq t \leq  s \wedge \pi_R }|x_t|^pds  < \infty.
\end{align}
From here we can finish the proof by Gronwall's and Fatou's lemmas.
\end{proof}
\section{Tamed Euler Scheme}
For every $n \in  \mathbb{N}$, let $b_t^n(x)$ and $\sigma_t^n(x)$ are  $\mathscr P \otimes \mathscr{B}(\mathbb R^d)$-measurable functions which respectively take values in $\mathbb R^d$ and $\mathbb R^{d \times m}$. Also, for every $n \in \mathbb{N}$, let $\gamma_t^n(x,z)$ be $\mathscr P \otimes \mathscr{B}(\mathbb R^d)\otimes \mathscr{Z}$-measurable function which takes values in $\mathbb R^{d}$. For every $n \in \mathbb{N}$, we consider a scheme of SDE \eqref{eq:sdewrc} as defined below,
\begin{align} \label{eq:em:sdewrc}
dx_t^n=b_t^n(x^n_{\kappa(n,t)})dt + \sigma_t^n(x^n_{\kappa(n,t)})dw_t+\int_{Z}\gamma_t^n(x^n_{\kappa(n,t)},z)\tilde N(dt, dz), \,
\end{align}
almost surely for any $t \in [t_0, t_1]$ where the initial value $x_{t_0}^n$ is an $\mathscr{F}_{t_0}$-measurable random variable which takes values in $\mathbb{R}^d$ and  function  $\kappa$ is  defined by
\begin{align} \label{eq:kappa:sdewrc}
\kappa(n, t):=\frac{[n(t-t_0)]}{n}+t_0
\end{align}
for any $t \in [t_0, t_1]$.
\subsection{Moment Bounds}   \label{sec:mb:em:rc}
We make the following assumptions on the coefficients of the scheme \eqref{eq:em:sdewrc}.
\begin{assumptionB} \label{as:initial:em:mb}
We have $\sup_{n \in \mathbb{N}}E|x^n_{t_0}|^p < \infty$.
\end{assumptionB}
\begin{assumptionB} \label{as:growth:tem:mb}
There exist a constant $L>0$ and a sequence $(M_n)_{n \in \mathbb{N}}$ of non-negative random variables satisfying $\sup_{n \in \mathbb{N}} EM_n^\frac{p}{2} < \infty$ such that
\begin{align}
  xb_t^n(x) \vee |\sigma_t^n(x)|^2 \vee \int_Z|\gamma_t^n(x, z)|^2 \nu(dz)\leq L(M_n+|x|^2) \notag
\end{align}
almost surely for any $t \in [t_0, t_1]$, $n \in \mathbb{N}$ and $x\in \mathbb{R}^d$.
\end{assumptionB}
\begin{assumptionB}   \label{as:growth:p:em:mb}
There exist a constant $L>0$ and a sequence $(M_n')_{n \in \mathbb{N}}$ of non-negative random variables satisfying $\sup_{n \in \mathbb{N}} EM_n' < \infty$  such that
$$
\int_Z |\gamma_t^n(x,z)|^p \nu(dz) \leq L(M_n'+|x|^p)
$$
almost surely for any  $t \in [t_0,t_1]$, $n \in \mathbb{N}$ and $x \in \mathbb{R}^d$.
\end{assumptionB}
Below is our taming assumption on drift coefficient of  scheme \eqref{eq:em:sdewrc} following the approach of \cite{sabanis2013}.
\begin{assumptionB} \label{as:bn}
For any $t \in [t_0, t_1]$ and $x \in \mathbb{R}^d$,
$$
|b_t^n(x)| \leq n^\theta
$$
almost surely with $\theta\in (0,\frac{1}{2}]$ for every  $n \in \mathbb{N}$.
\end{assumptionB}
\begin{remark}
Note that due to assumption B-4, for each $n \ge 1$, the norm of
$b^n$ is a bounded function of $t$ and $x$ which along with B-1 and B-2
guarantee the existence of a unique solution to \eqref{eq:em:sdewrc}.
Moreover, they also guarantee that for each $n\ge 1$,
\begin{equation*}
E\sup_{0\le t\le T}|x^n_t|^p < \infty.
\end{equation*}
 Clearly, one cannot claim at this point that this bound is independent of $n$. Nevertheless, as a result of this observation, one needs not to apply stopping time arguments, similar to the one used in the proof of Lemma \ref{lem:mb:rc}, in the proofs of Lemma \ref{lem:tem:conv:onestep} and Lemma  \ref{lem:mb:tes} mentioned below.
\end{remark}
\begin{lemma} \label{lem:tem:conv:onestep}
Let Assumptions  B-2 to B-4 hold. Then
\begin{align*}
\int_{t_0}^{u}E|x^n_t-x^n_{\kappa(n,t)}|^p dt & \leq  K n^{-1} + K n^{-1}  \int_{t_0}^{u}  E|x^n_{\kappa(n,t)}|^p dt
\end{align*}
for any $u \in [t_0, t_1]$ with  $K:=K\big(t_0,t_1,L,p,\sup_{n \in \mathbb{N}}EM_n^\frac{p}{2}, \sup_{n \in \mathbb{N}}EM_n'\big)$ which does not depend on $n$.
\end{lemma}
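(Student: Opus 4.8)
The plan is to estimate $E|x^n_t - x^n_{\kappa(n,t)}|^p$ pointwise in $t$ and then integrate, exploiting the fact that on the interval $[\kappa(n,t), t]$ the integrand of the scheme \eqref{eq:em:sdewrc} is frozen at the value $x^n_{\kappa(n,t)}$. Writing $\delta_t := x^n_t - x^n_{\kappa(n,t)} = \int_{\kappa(n,t)}^t b^n_s(x^n_{\kappa(n,s)})\,ds + \int_{\kappa(n,t)}^t \sigma^n_s(x^n_{\kappa(n,s)})\,dw_s + \int_{\kappa(n,t)}^t\int_Z \gamma^n_s(x^n_{\kappa(n,s)},z)\,\tilde N(ds,dz)$, I would split $|\delta_t|^p \le K(|I_1|^p + |I_2|^p + |I_3|^p)$ with $K$ depending only on $p$, and bound each term in $L^1$. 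Note that by the remark following B-4 we already know $E\sup_{t_0\le t\le T}|x^n_t|^p<\infty$ for each fixed $n$, so all the expectations below are a priori finite and no localization by stopping times is needed.

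For the drift term $I_1$, I would use the taming bound B-4, $|b^n_s(x)|\le n^\theta$ with $\theta\le 1/2$, together with $|t-\kappa(n,t)|\le n^{-1}$, to get $|I_1|^p \le (n^\theta/n)^p = n^{p(\theta-1)} \le n^{-p/2} \le n^{-1}$; this is the cleanest term and contributes the $Kn^{-1}$ on the right. For the diffusion term $I_2$, I would apply the Burkholder–Davis–Gundy inequality to the martingale $\int_{\kappa(n,t)}^t \sigma^n_s\,dw_s$, obtaining $E|I_2|^p \le K E\big(\int_{\kappa(n,t)}^t |\sigma^n_s(x^n_{\kappa(n,s)})|^2\,ds\big)^{p/2}$; since the interval has length at most $n^{-1}$, Hölder (or Jensen) in $s$ gives $E|I_2|^p \le K n^{-p/2+1}\int_{\kappa(n,t)}^t E|\sigma^n_s(x^n_{\kappa(n,s)})|^p\,ds \le K n^{-p/2} \sup_{\kappa(n,t)\le s\le t} E|\sigma^n_s(x^n_{\kappa(n,s)})|^p$, and then B-2 bounds $|\sigma^n_s(x)|^p = (|\sigma^n_s(x)|^2)^{p/2} \le L^{p/2}(M_n+|x|^2)^{p/2} \le K(M_n^{p/2}+|x|^p)$. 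For the jump term $I_3$, I would invoke Lemma~\ref{lem:maximal:inequality} with $r=p$, producing two pieces: $K E\big(\int_{\kappa(n,t)}^t\int_Z |\gamma^n_s|^2\,\nu(dz)\,ds\big)^{p/2}$, handled exactly as the diffusion term using the $|\gamma|^2$-part of B-2 plus Hölder in $s$; and $K E\int_{\kappa(n,t)}^t\int_Z |\gamma^n_s|^p\,\nu(dz)\,ds$, handled directly by B-3, which gives $\le K n^{-1}(E M_n' + \sup_s E|x^n_{\kappa(n,s)}|^p)$.

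Collecting the bounds, for each $t$ one has $E|\delta_t|^p \le K n^{-1} + K n^{-1}\big(E M_n^{p/2} + E M_n' + E|x^n_{\kappa(n,t)}|^p\big)$ (being slightly careful that the diffusion/jump terms actually give the factor $n^{-p/2}$, which is $\le n^{-1}$ since $p\ge 2$). Integrating over $t\in[t_0,u]$, absorbing $\sup_n E M_n^{p/2}$ and $\sup_n E M_n'$ into the constant $K$ via B-2 and B-3, and using $t_1-t_0\le T$ yields $\int_{t_0}^u E|\delta_t|^p\,dt \le K n^{-1} + K n^{-1}\int_{t_0}^u E|x^n_{\kappa(n,t)}|^p\,dt$, as claimed. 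The only mildly delicate point — the ``main obstacle'' such as it is — is bookkeeping the powers of $n$ in the stochastic integral terms: one must notice that BDG followed by Hölder on a length-$n^{-1}$ interval produces $n^{-(p/2-1)}$ times a time integral, giving $n^{-p/2}\sup_s E|\cdots|^p$ after pulling out the supremum, and that $n^{-p/2}\le n^{-1}$ precisely because $p\ge 2$; everything else is a routine application of B-2, B-3, B-4 and the two cited inequalities.
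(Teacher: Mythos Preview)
Your proposal is correct and follows essentially the same approach as the paper's proof: both split the one-step increment into drift, diffusion, and jump parts, bound the drift via B-4, the diffusion via BDG and B-2, and the jump via Lemma~\ref{lem:maximal:inequality} together with B-2 and B-3, then use $\theta\le 1/2$ and $p\ge 2$ to collect powers of $n$. The only cosmetic difference is that the paper applies B-2 directly to the integrand (exploiting that $\kappa(n,s)=\kappa(n,t)$ on $[\kappa(n,t),t]$ so the bound is constant in $s$) rather than invoking H\"older in $s$ as you do, but the outcome is identical.
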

\begin{proof} From the definition of  scheme \eqref{eq:em:sdewrc}, one writes,
\begin{align*}
E&|x^n_t-x^n_{\kappa(n,t)}|^p  \leq K E \Big|\int_{\kappa(n,t)}^{t}  b_s^n(x^n_{\kappa(n,s)}) ds \Big|^p+ K E \Big|\int_{\kappa(n,t)}^{t}   \sigma^{n}_s(x^n_{\kappa(n,s)})  dw_s \Big|^p
\\
& \qquad \qquad+ KE\Big|\int_{\kappa(n,t)}^{t}  \int_{Z}   \gamma_s^n(x^n_{\kappa(n,s)},z)   \tilde N(ds,dz)\Big|^p.
\end{align*}
which on the application of H\"{o}lder's inequality and an elementary stochastic inequalities gives
\begin{align*}
E|x^n_{t}-x^n_{\kappa(n,t)}|^p & \leq   Kn^{-(p-1)} E \int_{\kappa(n,t)}^{t}  |b_s^n(x^n_{\kappa(n,s)})|^p ds + K E \Big(\int_{\kappa(n,t)}^{t}   |\sigma^{n}_s(x^n_{\kappa(n,s)}) |^2 ds\Big)^\frac{p}{2}
\\
+   K  E\Big(\int_{\kappa(n,t)}^{t} & \int_{Z}  | \gamma_s^n(x^n_{\kappa(n,s)},z)|^2 \nu(dz) ds\Big)^\frac{p}{2} +  K E\int_{\kappa(n,t)}^{t}  \int_{Z}  | \gamma_s^n(x^n_{\kappa(n,s)},z)|^p \nu(dz) ds.
\end{align*}
On using Assumptions  B-2, B-3 and  B-4, one obtains,
\begin{align*}
E|x^n_{t}-x^n_{\kappa(n,t)}|^p & \leq   K\Big(n^{-p(1-\theta)} +  n^{-\frac{p}{2}}E  (M_n+|x^n_{\kappa(n,t)}|^2)^\frac{p}{2} +  n^{-1}E  (M_n'+|x^n_{\kappa(n,t)}|^p)\Big)
\end{align*}
which completes the proof by noticing that $\theta \in (0,\frac{1}{2}]$ and $p\geq 2$.
\end{proof}
\begin{lemma} \label{lem:mb:tes}
Let Assumptions B-1 to B-4 be satisfied. Then,
$$
\sup_{n \in \mathbb{N}} E\sup_{t_0 \leq t \leq t_1}|x_t^n|^p \leq K,
$$
with $K:=K\big(t_0,t_1,L,p, \sup_{n\in \mathbb{N}} E|x_{t_0}^n|^p, \sup_{n\in \mathbb{N}}EM_n^\frac{p}{2},\sup_{n\in \mathbb{N}}EM_n' \big)$ which is independent of $n$.
\end{lemma}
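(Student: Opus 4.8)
The plan is to mimic the structure of the proof of Lemma \ref{lem:mb:rc}, replacing the coefficients $b,\sigma,\gamma$ by $b^n,\sigma^n,\gamma^n$ and exploiting the one-step estimate of Lemma \ref{lem:tem:conv:onestep} to control the mismatch between $x^n_t$ and its piecewise-constant interpolation $x^n_{\kappa(n,t)}$. The key point that makes the stopping-time argument unnecessary is the remark following Assumption B-4: for each fixed $n$ we already know $E\sup_{t_0\le t\le t_1}|x^n_t|^p<\infty$, so all the quantities below are a priori finite and we may apply Gronwall's lemma directly.

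First I would apply It\^o's formula to $|x^n_t|^p$ exactly as in \eqref{eq:ito:rc}, but now the drift, diffusion and jump integrands are evaluated at $x^n_{\kappa(n,s)}$ while the outer factors $|x^n_s|^{p-2}x^n_s$ are evaluated at the continuous process. The first term becomes $p\int_{t_0}^t |x^n_s|^{p-2}x^n_s\, b^n_s(x^n_{\kappa(n,s)})\,ds$; I would split $x^n_s = x^n_{\kappa(n,s)} + (x^n_s - x^n_{\kappa(n,s)})$ so that the ``diagonal'' part $x^n_{\kappa(n,s)} b^n_s(x^n_{\kappa(n,s)})$ is handled by the one-sided bound in Assumption B-2, giving $\le L(M_n + |x^n_{\kappa(n,s)}|^2)$, while the ``off-diagonal'' part $(x^n_s - x^n_{\kappa(n,s)})\, b^n_s(x^n_{\kappa(n,s)})$ is controlled using Assumption B-4 ($|b^n|\le n^\theta$ with $\theta\le 1/2$) together with Young's inequality, producing terms of the form $K|x^n_s|^{p-2}|x^n_s-x^n_{\kappa(n,s)}|\,n^\theta$; after another application of Young this is bounded by $K|x^n_s|^p + K n^{p\theta/(?)}|x^n_s-x^n_{\kappa(n,s)}|^p$ — the precise split is routine but one must be careful to keep the exponent of $n$ from $n^\theta$ combined with the $n^{-1}$ gain of Lemma \ref{lem:tem:conv:onestep} so that the net power of $n$ is non-positive, which works precisely because $\theta\le 1/2$ and $p\ge 2$. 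The $C^2$-remainder estimate \eqref{eq:y1y2} handles the jump term as before, now with $\gamma^n$ evaluated at $x^n_{\kappa(n,s)}$, and Assumptions B-2 and B-3 bound $|\gamma^n|^2$ and $|\gamma^n|^p$.

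Next I would take $\sup_{t_0\le t\le u}$ and expectations, obtaining an analogue of \eqref{eq:C1+C5}. The martingale terms $C_3$ (Brownian) and $C_4$ (compensated Poisson) are treated by Burkholder--Davis--Gundy and Lemma \ref{lem:maximal:inequality} respectively, in each case pulling out $\sup_{t_0\le t\le u}|x^n_t|^{p-1}$ and applying Young's inequality to absorb a term $\tfrac14 E\sup_{t_0\le t\le u}|x^n_t|^p$ into the left-hand side; the remaining factors involve $\int|\sigma^n|^2$ or $\int\!\int|\gamma^n|^2\nu$ evaluated at $x^n_{\kappa(n,s)}$ and are bounded via B-2, B-3 and H\"older by $K E M_n^{p/2} + K E M_n'/\!\!+ K\int E|x^n_{\kappa(n,s)}|^p ds$. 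Collecting everything, and using that wherever $|x^n_{\kappa(n,s)}|^p$ appears we may write $|x^n_{\kappa(n,s)}|^p \le K|x^n_s|^p + K|x^n_s - x^n_{\kappa(n,s)}|^p$ and invoke Lemma \ref{lem:tem:conv:onestep} to replace $\int_{t_0}^u E|x^n_s-x^n_{\kappa(n,s)}|^p ds$ by $Kn^{-1} + Kn^{-1}\int_{t_0}^u E|x^n_{\kappa(n,s)}|^p ds \le K + K\int_{t_0}^u E\sup_{t_0\le r\le s}|x^n_r|^p ds$, we arrive at
\begin{align*}
E\sup_{t_0\le t\le u}|x^n_t|^p \le K + K\int_{t_0}^u E\sup_{t_0\le r\le s}|x^n_r|^p\, ds,
\end{align*}
with $K$ depending only on the data listed in the statement and, crucially, not on $n$ (since only $\sup_n EM_n^{p/2}$, $\sup_n EM_n'$, $\sup_n E|x^n_{t_0}|^p$ enter). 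Gronwall's lemma then yields the uniform bound.

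The main obstacle I anticipate is the bookkeeping of powers of $n$ in the off-diagonal drift term: one gets a contribution scaling like $n^\theta$ from $|b^n|\le n^\theta$, which must be compensated by the $n^{-1}$ factor coming from Lemma \ref{lem:tem:conv:onestep}; the inequality $\theta\le \tfrac12$ is exactly what guarantees that after the Young-inequality splitting all residual $n$-powers are $\le n^0$, so that no blow-up occurs. A secondary technical point is ensuring that every bound is genuinely independent of $n$ — this forces one to phrase the growth hypotheses through $\sup_n EM_n^{p/2}$ and $\sup_n EM_n'$ throughout and never through the (possibly $n$-dependent) a priori bound from the remark, which is used only to justify applying Gronwall.
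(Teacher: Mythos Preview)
Your overall architecture is right, and you correctly flag the power-of-$n$ bookkeeping as the crux. But the specific route you propose for the off-diagonal drift term does not close. After bounding $|x^n_s|^{p-2}(x^n_s-x^n_{\kappa(n,s)})\,b^n_s(x^n_{\kappa(n,s)})$ by $n^\theta|x^n_s|^{p-2}|x^n_s-x^n_{\kappa(n,s)}|$, any Young split that keeps the first factor at most $|x^n_s|^p$ forces an exponent $r'\ge p/2$ on the second, producing $n^{\theta r'}|x^n_s-x^n_{\kappa(n,s)}|^{r'}$. The one-step estimate (Lemma~\ref{lem:tem:conv:onestep}, or its analogue at exponent $r'$) yields only $n^{-1}$ for the integrated moment once $r'\ge 2$, because the jump part of the scheme contributes a term $E\int_{\kappa(n,t)}^t\int_Z|\gamma^n|^{r'}\nu(dz)\,dr$ of order $n^{-1}$ \emph{independently of $r'$}. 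Hence the residual factor is $n^{\theta r'-1}$, and for $\theta=\tfrac12$ and $p>4$ this is strictly positive. Your claim that ``$\theta\le\tfrac12$ and $p\ge 2$ is exactly what guarantees non-positive residual powers'' is therefore false in general.

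The paper avoids this by \emph{not} passing through the scalar $|x^n_s-x^n_{\kappa(n,s)}|$. It substitutes the scheme itself, writing $x^n_s-x^n_{\kappa(n,s)}$ as the sum of drift, diffusion and jump integrals over $[\kappa(n,s),s]$, and multiplies each by $n^\theta|x^n_s|^{p-2}$ separately. The drift and diffusion pieces then close easily (giving factors $n^{2\theta-1}$ and, after Young to exponent $p/2$ plus a moment inequality, $n^{(p/2)(\theta-1/2)}$). The jump piece needs one more idea: split $|x^n_s|^{p-2}\le K|x^n_{\kappa(n,s)}|^{p-2}+K|x^n_s-x^n_{\kappa(n,s)}|^{p-2}$. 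The first summand is $\mathscr F_{\kappa(n,s)}$-measurable and hence predictable on $(\kappa(n,s),s]$, so it can be pulled inside the compensated Poisson integral; Lemma~\ref{lem:maximal:inequality} with $r=1$ then gives a square root and, after Young, a factor $n^{(p/2)(\theta-1/2)}\le 1$. The second summand, after Young, leaves $n^\theta\int E|x^n_s-x^n_{\kappa(n,s)}|^p\,ds$ with a \emph{single} $n^\theta$ (not $n^{\theta r'}$), for which Lemma~\ref{lem:tem:conv:onestep} now gives $n^{\theta-1}\le 1$. This finer decomposition---treating the jump increment via predictability of $|x^n_{\kappa(n,s)}|^{p-2}$ rather than via the one-step $p$th-moment bound---is the missing ingredient in your plan.
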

\begin{proof}
By the application of It\^{o} formula, one gets
\begin{align}
|x_t^n|^p &= |x_{t_0}^n|^p+ p \int_{t_0}^{t} |x_s^n|^{p-2} x_s^n b_s^n( x_{\kappa(n,s)}^n) ds + p\int_{t_0}^{t} |x_s^n|^{p-2} x_s^n \sigma_s^n(x_{\kappa(n,s)}^n)  dw_s  \notag
\\
& + \frac{p(p-2)}{2} \int_{t_0}^{t} |x_s^n|^{p-4}|\sigma_s^{n*}(x_{\kappa(n,s)}^n) x_s^n|^2ds +\frac{p}{2}\int_{t_0}^{t} |x_s^n|^{p-2}|\sigma_s^n(x_{\kappa(n,s)}^n)|^2 ds  \notag
\\
&+  p\int_{t_0}^{t} \int_{Z} |x_s^n|^{p-2} x_{s}^n \gamma_s^n( x_{\kappa(n,s)}^n,z)    \tilde N(ds,dz) \notag
\\
+\int_{t_0}^{t} &\int_{Z}\{ |x_{s}^n+\gamma_s^n(x_{\kappa(n,s)}^n,z)|^p-|x_{s}^n|^p-p|x_{s}^n|^{p-2} x_{s}^n\gamma_s^n(x_{\kappa(n,s)}^n,z) \}N(ds,dz) \label{eq:tem:ito}
\end{align}
almost surely for any $t \in [t_0,t_1]$. In order to estimate second term of \eqref{eq:tem:ito}, one writes
\begin{align*}
x_s^n b_s^n( x_{\kappa(n,s)}^n) =(x_s^n-x_{\kappa(n,s)}^n) b_s^n( x_{\kappa(n,s)}^n) + x_{\kappa(n,s)}^n b_s^n( x_{\kappa(n,s)}^n)
\end{align*}
which due to Assumption B-2 and equation \eqref{eq:em:sdewrc} gives
\begin{align*}
x_s^n b_s^n( x_{\kappa(n,s)}^n)  \leq & |b_s^n( x_{\kappa(n,s)}^n)|\Big\{\Big|\int^{s}_{\kappa(n,s)}b_r^n(x^n_{\kappa(n,r)})dr \Big|
 +  \Big|\int^{s}_{\kappa(n,s)}\sigma_r^n(x^n_{\kappa(n,r)})dw_r\Big|
 \\
 &   +\Big|\int^{s}_{\kappa(n,s)}\int_{Z}\gamma_r^n(x^n_{\kappa(n,r)},z)\tilde N(dr, dz)\Big|\Big\} + K(M_n+|x_{\kappa(n,s)}^n|^2)
\end{align*}
and then Assumption B-4 implies,
\begin{align*}
x_s^n b_s^n( x_{\kappa(n,s)}^n)  \leq & n^{2\theta-1}
 + n^\theta \Big|\int^{s}_{\kappa(n,s)}\sigma_r^n(x^n_{\kappa(n,r)})dw_r\Big| \notag
 \\
 &+n^\theta \Big|\int^{s}_{\kappa(n,s)}\int_{Z}\gamma_r^n(x^n_{\kappa(n,r)},z)\tilde N(dr, dz)\Big| + K(M_n+|x_{\kappa(n,s)}^n|^2)
\end{align*}
almost surely for any $s \in [t_0,t_1]$. By using the fact that $\theta \in (0,1/2]$ implies $2\theta-1 \leq 0$, one obtains
\begin{align*}
|x_s^n|^{p-2} & x_s^n b_s^n( x_{\kappa(n,s)}^n)  \leq
|x_s^n|^{p-2} + n^\theta |x_s^n|^{p-2} \Big|\int^{s}_{\kappa(n,s)}\sigma_r^n(x^n_{\kappa(n,r)})dw_r\Big|
 \\
 &  \,\,\, +n^\theta |x_s^n|^{p-2}\Big|\int^{s}_{\kappa(n,s)}\int_{Z}\gamma_r^n(x^n_{\kappa(n,r)},z)\tilde N(dr, dz)\Big| + K |x_s^n|^{p-2}(M_n+|x_{\kappa(n,s)}^n|^2)
\end{align*}
which on using Young's inequality along with the inequality $|x_s^n|^{p-2} \leq 2^{p-3} |x_s^n-x_{\kappa(n,s)}^n|^{p-2}+2^{p-3}|x_{\kappa(n,s)}^n|^{p-2}$ gives
\begin{align}
 |x_s^n|^{p-2}  x_s^n b_s^n( x_{\kappa(n,s)}^n) & \leq
1+ K |x_s^n|^p + K n^{\theta\frac{p}{2}}  \Big|\int^{s}_{\kappa(n,s)}\sigma_r^n(x^n_{\kappa(n,r)})dw_r\Big|^\frac{p}{2} \notag
 \\
 &  + K n^{\theta} |x_{\kappa(n,s)}^n|^{p-2}\Big|\int^{s}_{\kappa(n,s)}\int_{Z}\gamma_r^n(x^n_{\kappa(n,r)},z)\tilde N(dr, dz)\Big| \notag
 \\
& + K n^{\theta} |x_s^n-x_{\kappa(n,s)}^n|^{p-2}\Big|\int^{s}_{\kappa(n,s)}\int_{Z}\gamma_r^n(x^n_{\kappa(n,r)},z)\tilde N(dr, dz)\Big|  \notag
\\
& + K (M_n^{\frac{p}{2}}+|x_{\kappa(n,s)}^n|^p) \label{eq:new}
\end{align}
almost surely for any $s \in [t_0,t_1]$. Therefore, by substituting  estimates from equations \eqref{eq:y1y2} and \eqref{eq:new}  in equation \eqref{eq:tem:ito}, one obtains for $u \in [t_0,t_1]$,
\begin{align}
E \sup_{t_0 \leq t \leq u} &|x_t^n|^p   \leq E|x_{t_0}^n|^p+ K + K E \int_{t_0}^{u} |x_s^n|^p  ds \notag
\\
& + K n^{\theta\frac{p}{2}} E \int_{t_0}^{u}   \Big|\int^{s}_{\kappa(n,s)}\sigma_r^n(x^n_{\kappa(n,r)})dw_r\Big|^\frac{p}{2}  ds  \notag
\\
& +K n^{\theta} E\int_{t_0}^{u}  \Big|\int^{s}_{\kappa(n,s)}\int_{Z}|x_{\kappa(n,s)}^n|^{p-2} \gamma_r^n(x^n_{\kappa(n,r)},z)\tilde N(dr, dz)\Big| ds \notag
\\
&+  K n^{\theta} E \int_{t_0}^{u} |x_s^n-x_{\kappa(n,s)}^n|^{p-2}\Big|\int^{s}_{\kappa(n,s)}\int_{Z}\gamma_r^n(x^n_{\kappa(n,r)},z)\tilde N(dr, dz)\Big| ds \notag
\\
&+KE\int_{t_0}^{u} (M_n^{\frac{p}{2}}+|x_{\kappa(n,s)}^n|^p) ds+ pE \sup_{t_0 \leq t \leq u} \Big|\int_{t_0}^{t} |x_s^n|^{p-2} x_s^n \sigma_s^n(x_{\kappa(n,s)}^n)  dw_s \Big| \notag
\\
& + K E \int_{t_0}^{u} |x_s^n|^{p-2}|\sigma_s^{n}(x_{\kappa(n,s)}^n)|^2ds \notag
\\
& +  pE \sup_{t_0 \leq t \leq u}\Big|\int_{t_0}^{t} \int_{Z} |x_s^n|^{p-2} x_{s}^n \gamma_s^n( x_{\kappa(n,s)}^n,z)    \tilde N(ds,dz) \Big| \notag
\\
& +E \int_{t_0}^{u} \int_{Z}\{ |x_{s}^n|^{p-2}|\gamma_s^n(x_{\kappa(n,s)}^n,z)|^2+ |\gamma_s^n(x_{\kappa(n,s)}^n,z)|^p \}N(ds,dz) \notag
\\
& \quad=: E_1+E_2+E_3+E_4+E_5+E_6+E_7+E_8+E_9+E_{10}. \label{eq:E1+E10}
\end{align}
Here $E_1:=E|x_{t_0}^n|^p+K$. One estimates $E_2$ by
\begin{align} \label{eq:E2}
E_2 := K  E \int_{t_0}^{u} |x_s^n|^p  ds \leq K  \int_{t_0}^{u} E\sup_{t_0 \leq r \leq s}|x_r^n|^p  ds.
\end{align}
In order to estimate  $E_3$, one applies an elementary stochastic inequality to obtain
\begin{align*}
E_3 & := K n^{\theta\frac{p}{2}} E \int_{t_0}^{u}   \Big|\int^{s}_{\kappa(n,s)}\sigma_r^n(x^n_{\kappa(n,r)})dw_r\Big|^\frac{p}{2}  ds \notag
\\
& \leq K n^{\theta\frac{p}{2}}  \int_{t_0}^{u}  E \Big(\int^{s}_{\kappa(n,s)}|\sigma_r^n(x^n_{\kappa(n,r)})|^2dr\Big)^\frac{p}{4}  ds \notag
\end{align*}
and then on the application of Assumption B-2, one obtains
\begin{align*}
E_3 \leq K n^{\frac{p}{2}(\theta-\frac{1}{2})}  \int_{t_0}^{u}  E (M_n+|x^n_{\kappa(n,s)}|^2)^\frac{p}{4}  ds
\end{align*}
which by noticing that $\frac{p}{2}(\theta-\frac{1}{2}) \in (-\frac{p}{4},0]$ implies
\begin{align} \label{eq:E3}
E_3  \leq K\int_{t_0}^{u}  E \{1+(M_n+|x^n_{\kappa(n,s)}|^2)^\frac{p}{2}\}  ds   \leq K +\int_{t_0}^{u} E \sup_{t_0 \leq r \leq s} |x^n_{r}|^p  ds.
\end{align}
By Lemma \ref{lem:maximal:inequality}, one estimates $E_4$ as
\begin{align}
E_4 & :=K n^{\theta} E\int_{t_0}^{u}  \Big|\int^{s}_{\kappa(n,s)}\int_{Z}|x_{\kappa(n,s)}^n|^{p-2} \gamma_r^n(x^n_{\kappa(n,r)},z)\tilde N(dr, dz)\Big| ds \notag
\\
& \leq K n^{\theta} \int_{t_0}^{u}  E\Big(\int^{s}_{\kappa(n,s)}\int_{Z}|x_{\kappa(n,s)}^n|^{2p-4} |\gamma_r^n(x^n_{\kappa(n,r)},z)|^2 \nu(dz) dr \Big)^\frac{1}{2} ds \notag
\end{align}
which due to Assumption B-2 gives
\begin{align}
E_4 & \leq  KE\sup_{t_0 \leq s \leq u}|x_s^n|^{p-2}  n^{\theta-\frac{1}{2}} \int_{t_0}^{u}  (M_n+ |x^n_{\kappa(n,s)}|^2)^\frac{1}{2} ds \notag
\end{align}
and then on using Young's inequality and H\"{o}lder's inequality, one obtains
\begin{align}
E_4 \leq \frac{1}{8} E\sup_{t_0 \leq s \leq u}|x_s^n|^ p+ K  n^{\frac{p}{2}(\theta-\frac{1}{2})} E \int_{t_0}^{u}  (M_n+ |x^n_{\kappa(n,s)}|^2)^\frac{p}{4} ds. \notag
\end{align}
By noticing that $\theta \in (0,\frac{1}{2}]$, one has
\begin{align} \label{eq:E4}
E_4 &\leq \frac{1}{8} E\sup_{t_0 \leq s \leq u}|x_s^n|^ p+ K E  \int_{t_0}^{u}  \{1+(M_n+ |x^n_{\kappa(n,s)}|^2)^\frac{p}{2} \}ds. \notag
\\
& \leq \frac{1}{8} E\sup_{t_0 \leq s \leq u}|x_s^n|^ p+ K  +K \int_{t_0}^{u} E\sup_{t_0 \leq r \leq s}|x^n_r|^p  ds.
\end{align}
 Further, to estimate $E_5$, one uses Young's inequality and H\"older's inequality to write
\begin{align}
& E_5:=K n^{\theta} E \int_{t_0}^{u} |x_s^n-x_{\kappa(n,s)}^n|^{p-2}\Big|\int^{s}_{\kappa(n,s)}\int_{Z}\gamma_r^n(x^n_{\kappa(n,r)},z)\tilde N(dr, dz)\Big| ds \notag
\\
&\leq K   n^{\theta} \int_{t_0}^{u}E|x_s^n-x_{\kappa(n,s)}^n|^p  ds + K n^{\theta}\int_{t_0}^{u}E\Big|\int^{s}_{\kappa(n,s)}\int_{Z}\gamma_r^n(x^n_{\kappa(n,r)},z)\tilde N(dr, dz)\Big|^\frac{p}{2} ds \notag
\\
&\leq K   n^{\theta} \int_{t_0}^{u}E|x_s^n-x_{\kappa(n,s)}^n|^p  ds +1+K n^{2\theta}\int_{t_0}^{u}E\Big|\int^{s}_{\kappa(n,s)}\int_{Z}\gamma_r^n(x^n_{\kappa(n,r)},z)\tilde N(dr, dz)\Big|^p ds. \notag
\end{align}
which on the application of Lemma \ref{lem:maximal:inequality} and Lemma \ref{lem:tem:conv:onestep} implies,
\begin{align*}
E_5 &\leq  1+K n^{\theta-1}  + K n^{\theta-1}  \int_{t_0}^{u}  E|x^n_{\kappa(n,s)}|^p ds \notag
\\
& + K n^{2\theta}\int_{t_0}^{u}E\Big(\int^{s}_{\kappa(n,s)}\int_{Z}|\gamma_r^n(x^n_{\kappa(n,r)},z)|^2\nu(dz) dr\Big)^\frac{p}{2} ds
\\
& + K n^{2\theta}\int_{t_0}^{u}E\int^{s}_{\kappa(n,s)}\int_{Z}|\gamma_r^n(x^n_{\kappa(n,r)},z)|^p\nu(dz) dr ds.
\end{align*}
By using Assumptions B-2 and B-3, one obtains
\begin{align*}
E_5 &\leq  1+ K n^{\theta-1}  + K n^{\theta-1}  \int_{t_0}^{u}  E|x^n_{\kappa(n,s)}|^p ds + K n^{2\theta-\frac{p}{2}}\int_{t_0}^{u}E\big(M_n+|x^n_{\kappa(n,s)}|^2\big)^\frac{p}{2} ds
\\
& \qquad+ K n^{2\theta-1}\int_{t_0}^{u}E(M_n'+|x^n_{\kappa(n,s)}|^p) ds.
\end{align*}
Notice that $2\theta-1 \in (-1,0]$ and $p\geq 2$. Hence one has
\begin{align} \label{eq:E5}
E_5 &\leq  K+ KE M_n^\frac{p}{2} +EM_n'  + K   \int_{t_0}^{u}  E|x^n_{\kappa(n,s)}|^p ds  \leq K   + K \int_{t_0}^{u}  E\sup_{t_0 \leq r \leq s}|x^n_r|^p ds.
\end{align}
It is easy to observe that $E_6$ can be estimated by
\begin{align} \label{eq:E6}
E_6:=KE\int_{t_0}^{u} (M_n^{\frac{p}{2}}+|x_{\kappa(n,s)}^n|^p) ds \leq K   + K \int_{t_0}^{u}  E\sup_{t_0 \leq r \leq s}|x^n_r|^p ds.
\end{align}
By using Burkholder-Davis-Gundy inequality and Assumption B-2, one obtains the following  estimates of $E_7$,
\begin{align*}
E_7&:=pE \sup_{t_0 \leq t \leq u} \Big|\int_{t_0}^{t} |x_s^n|^{p-2} x_s^n \sigma_s^n(x_{\kappa(n,s)}^n)  dw_s \Big|
\\
& \leq K E \Big(\int_{t_0}^{u} |x_s^n|^{2p-2} | \sigma_s^n(x_{\kappa(n,s)}^n)|^2  ds \Big)^\frac{1}{2}   \leq K E \Big(\int_{t_0}^{u} |x_s^n|^{2p-2} (M_n+|x_{\kappa(n,s)}^n|^2)  ds \Big)^\frac{1}{2} \notag
\\
& \leq K E \sup_{t_0 \leq s \leq u}|x_s^n|^{p-1} \Big(\int_{t_0}^{u}  (M_n+|x_{\kappa(n,s)}^n|^2)  ds \Big)^\frac{1}{2}
\end{align*}
which due to Young's inequality and  H\"{o}lder's inequality gives
\begin{align} \label{eq:E7}
E_7   \leq \frac{1}{8}  E \sup_{t_0 \leq s \leq u}|x_s^n|^p+K+K E \int_{t_0}^{u}  E \sup_{t_0 \leq r \leq s}|x_r^n|^p  ds.
\end{align}
Similarly, by using Assumption B-2 and Young's inequality, $E_8$ can be estimated by
\begin{align} \label{eq:E8}
E_8 & := K E \int_{t_0}^{u} |x_s^n|^{p-2}|\sigma_s^{n}(x_{\kappa(n,s)}^n)|^2ds  \leq K+K  \int_{t_0}^{u} E\sup_{t_0 \leq r \leq s}|x_r^n|^p ds.
\end{align}
Further one uses Lemma \ref{lem:maximal:inequality} and Assumption B-2 to estimate $E_9$ by,
\begin{align*}
 E_9 &:= pE \sup_{t_0 \leq t \leq u}\Big|\int_{t_0}^{t} \int_{Z} |x_s^n|^{p-2} x_{s}^n \gamma_s^n( x_{\kappa(n,s)}^n,z)    \tilde N(ds,dz) \Big| \notag
 \\
 & \leq K E \Big(\int_{t_0}^{u} \int_{Z} |x_s^n|^{2p-2} |\gamma_s^n( x_{\kappa(n,s)}^n,z)|^2 \nu(dz) ds \Big)^\frac{1}{2}
 \\
 &\leq K E \Big(\int_{t_0}^{u}  |x_s^n|^{2p-2} (M_n+| x_{\kappa(n,s)}^n|^2) ds \Big)^\frac{1}{2} \notag
\end{align*}
which due to  Young's inequality and H\"{o}lder's inequality gives
\begin{align} \label{eq:E9}
 E_9   & \leq \frac{1}{8} E \sup_{t_0 \leq s \leq u}|x_s^n|^{p} + K + K E \int_{t_0}^{u} E \sup_{t_0 \leq r \leq s}|x_r^n|^{p} ds.
\end{align}
Finally, due to Assumptions B-2 and  B-3, $E_{10}$ can be estimated as follow,
\begin{align*}
E_{10}&:=E \int_{t_0}^{u} \int_{Z}\{ |x_{s}^n|^{p-2}|\gamma_s^n(x_{\kappa(n,s)}^n,z)|^2+ |\gamma_s^n(x_{\kappa(n,s)}^n,z)|^p \}N(ds,dz)
\\
& =E \int_{t_0}^{u} \int_{Z}\{ |x_{s}^n|^{p-2}|\gamma_s^n(x_{\kappa(n,s)}^n,z)|^2+ |\gamma_s^n(x_{\kappa(n,s)}^n,z)|^p \}\nu(dz) ds
\\
& =E \int_{t_0}^{u}  |x_{s}^n|^{p-2}(M_n+|x_{\kappa(n,s)}^n|^2) ds+ E \int_{t_0}^{u} (M_n'+|x_{\kappa(n,s)}^n|^p) ds
\end{align*}
and then Young's inequality implies
\begin{align} \label{eq:E10}
E_{10} \leq K+  \int_{t_0}^{u} E\sup_{t_0 \leq r \leq s}|x_{r}^n|^{p} ds.
\end{align}
By substituting estimates from \eqref{eq:E2} -
\eqref{eq:E10} in equation \eqref{eq:E1+E10}, one obtains
\begin{align*}
E \sup_{t_0 \leq t \leq u}|x_t^n|^{p} \leq  \frac{1}{2} E \sup_{t_0 \leq t \leq u}|x_t^n|^{p} +K + K E \int_{t_0}^{u} E \sup_{t_0 \leq r \leq s}|x_r^n|^{p} ds.
\end{align*}
The application of Gronwall's Lemma  completes the proof.
\end{proof}
\begin{remark}   \label{rem:growth:r:em:mb}
Due to Assumptions B-2 and B-3, there exist a constant $L>0$ and a sequence $(M_n')_{n \in \mathbb{N}}$ of non-negative random variables satisfying $\sup_{n \in \mathbb{N}} EM_n' < \infty$  such that
$$
\int_Z |\gamma_t^n(x,z)|^r \nu(dz) \leq L(M_n'+|x|^r)
$$
almost surely for any  $2 \leq r \leq p$, $t \in [t_0,t_1]$, $n \in \mathbb{N}$ and $x \in \mathbb{R}^d$.
\end{remark}
\begin{lemma} \label{lem:tem:new:conv:onestep}
Let Assumptions B-1 to B-4 be satisfied. Then
\begin{align*}
\sup_{t_0\leq t\leq t_1}E|x^n_t-x^n_{\kappa(n,t)}|^r  & \leq  K n^{-1}
\end{align*}
for any $2 \leq r \leq p$ with $K:=K\big(t_0,t_1,L,p, \sup_{n\in \mathbb{N}} E|x_{t_0}^n|^p, \sup_{n\in \mathbb{N}}EM_n^\frac{p}{2},\sup_{n\in \mathbb{N}}EM_n' \big)$ which does not depend on $n$.
\end{lemma}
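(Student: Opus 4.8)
The plan is to start from the representation of the increment coming directly from the scheme \eqref{eq:em:sdewrc}, namely
\[
x^n_t-x^n_{\kappa(n,t)}=\int_{\kappa(n,t)}^{t}b^n_s(x^n_{\kappa(n,s)})\,ds+\int_{\kappa(n,t)}^{t}\sigma^n_s(x^n_{\kappa(n,s)})\,dw_s+\int_{\kappa(n,t)}^{t}\int_Z\gamma^n_s(x^n_{\kappa(n,s)},z)\,\tilde N(ds,dz),
\]
and to bound the $r$-th moment of each of the three terms separately, exactly as in the proof of Lemma \ref{lem:tem:conv:onestep} but now keeping track of a fixed time $t$ rather than integrating in $t$, and taking a supremum over $t\in[t_0,t_1]$ at the end. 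Since $t-\kappa(n,t)\le n^{-1}$, the drift term contributes, via H\"older in time and Assumption B-4, a factor $n^{-(r-1)}\,n^{-1}n^{r\theta}=n^{r\theta-r}$, which is at most $n^{-r/2}\le n^{-1}$ because $\theta\le 1/2$ and $r\ge 2$. The Brownian term is handled by the Burkholder--Davis--Gundy (or elementary moment) inequality together with Assumption B-2, giving a factor $n^{-r/2}E(M_n+|x^n_{\kappa(n,s)}|^2)^{r/2}\le n^{-r/2}KE(1+M_n^{r/2}+|x^n_{\kappa(n,s)}|^r)$. The jump term is treated with Lemma \ref{lem:maximal:inequality}: the quadratic-variation part yields $n^{-r/2}E(\int_Z|\gamma|^2\nu(dz))^{r/2}$ and the $r$-th power part yields $n^{-1}E\int_Z|\gamma|^r\nu(dz)$, both of which are controlled by Assumption B-2 and Remark \ref{rem:growth:r:em:mb} (applied with the given $r\in[2,p]$).

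Collecting these, one obtains
\[
E|x^n_t-x^n_{\kappa(n,t)}|^r\le Kn^{-1}\bigl(1+EM_n^{r/2}+EM_n'+E|x^n_{\kappa(n,t)}|^r\bigr),
\]
uniformly in $t\in[t_0,t_1]$. The point is then that the right-hand side is bounded by a constant independent of $n$ and $t$: Jensen's inequality gives $EM_n^{r/2}\le 1+EM_n^{p/2}$ and $EM_n'\le EM_n'$, both uniformly bounded by Assumptions B-2 and B-3; and $E|x^n_{\kappa(n,t)}|^r\le E\sup_{t_0\le s\le t_1}|x^n_s|^r\le (E\sup_{t_0\le s\le t_1}|x^n_s|^p)^{r/p}$, which is uniformly bounded in $n$ by Lemma \ref{lem:mb:tes}. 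This yields $\sup_{t_0\le t\le t_1}E|x^n_t-x^n_{\kappa(n,t)}|^r\le Kn^{-1}$ with $K$ depending only on the indicated quantities.

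I expect no serious obstacle here; the only mild subtleties are (i) making sure the drift contribution $n^{r\theta-r}$ is genuinely $\le Kn^{-1}$, which requires using $r\ge 2$ in addition to $\theta\le 1/2$ (for $r=2$ and $\theta=1/2$ it is exactly $n^{-1}$, and for larger $r$ it is smaller), and (ii) invoking Remark \ref{rem:growth:r:em:mb} rather than Assumption B-3 so that the $\gamma$-moment bound is available for the intermediate exponent $r$ and not just for $p$. Everything else is a routine repetition of the estimates already carried out in Lemma \ref{lem:tem:conv:onestep}, with the extra input of the uniform moment bound from Lemma \ref{lem:mb:tes} to absorb the $E|x^n_{\kappa(n,t)}|^r$ term into the constant.
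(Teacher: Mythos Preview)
Your proposal is correct and follows essentially the same route as the paper. The paper's own proof consists of a single sentence, ``The lemma follows immediately from Lemma \ref{lem:tem:conv:onestep} and Lemma \ref{lem:mb:tes}'', which is exactly what you carry out in detail: reproduce the pointwise estimate hidden inside the proof of Lemma \ref{lem:tem:conv:onestep} (for a general exponent $r\in[2,p]$, invoking Remark \ref{rem:growth:r:em:mb} for the jump term), and then absorb $E|x^n_{\kappa(n,t)}|^r$ into the constant via Lemma \ref{lem:mb:tes}. Your observations about needing $r\ge 2$ for the drift exponent and about using Remark \ref{rem:growth:r:em:mb} rather than Assumption B-3 directly are precisely the small points one has to check when unpacking the paper's one-line argument.
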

\begin{proof}
The lemma follows immediately from Lemma \ref{lem:tem:conv:onestep} and Lemma \ref{lem:mb:tes}.
\end{proof}
\subsection{Convergence in $\mathcal{L}^q$}
For every $R>0 $, we consider $\mathscr F_{t_0}$-measurable random variables $C_R$  which satisfy,
\begin{align}
\lim_{R \rightarrow \infty}P (C_R > f(R)) =0,  \label{eq:cr}
\end{align}
for  a non-decreasing function $f: \mathbb{R_+} \rightarrow \mathbb{R_+}$. This notation for the family of random variables with the above property will be used throughout  this article.

\begin{assumptionA} \label{as:local:lips:rc}
For every $R>0$ and $t \in [t_0,t_1]$,
\begin{align}
 (x-\bar{x})(b_t(x)-b_t(\bar{x})) \vee |\sigma_t(x)-\sigma_t(\bar{x})|^2 \vee \int_{Z} |\gamma_t(x,z)-\gamma_t(\bar{x},z)|^2 \nu(dz)\leq  C_R |x-\bar{x}|^2  \notag
\end{align}
almost surely whenever $|x|,  |\bar{x}| \leq R$.
\end{assumptionA}
\begin{assumptionA} \label{as:local:bound:b:rc}
For every $R>0$ and $t \in [t_0,t_1]$,
\begin{align*}
\sup_{|x| \leq R}| b_t(x)| \leq {C}_R,
\end{align*}
almost surely.
\end{assumptionA}


\begin{assumptionB} \label{as:conv:coeff}
For every $R>0$ and $B(R):= \{\omega\in \Omega: C_R \leq f(R)\}$,
\begin{align*}
 \lim_{n \rightarrow \infty}E\int_{t_0}^{t_1}I_{B(R)}\sup_{|x| \leq R}\{|b^n_t(x)-b_t(x)|^2+|\sigma^n_t(x)-\sigma_t(x)|^2\} dt &=0
\\
 \lim_{n \rightarrow \infty}E \int_{t_0}^{t_1}I_{B(R)}\sup_{|x| \leq R}\int_{Z}|\gamma^n_t(x,z)-\gamma_t(x,z)|^2\nu(dz) dt &=0.
\end{align*}
\end{assumptionB}
\begin{assumptionB} \label{as:conv:initial}
For every $n \in \mathbb{N}$, the initial values of SDE \eqref{eq:sdewrc} and scheme \eqref{eq:em:sdewrc} satisfy $|x_{t_0}-x_{t_0}^n| \stackrel{P}{\rightarrow} 0$ as $n \to \infty$.
\end{assumptionB}

We introduce  families of stopping times that shall be used frequently in this report. For every $R>0$ and $n \in \mathbb{N}$, let
\begin{align} \label{eq:stopping:time}
\pi_R:=\inf\{t \geq t_0: |x_t| \geq R\}, \,\, \pi_{nR}:=\inf\{t \geq t_0:|x_t^n| \geq R\}, \,\,
\tau_{nR}:=\pi_R  \wedge \pi_{nR}
\end{align}
almost surely.
\begin{theorem} \label{thm:conv:rc}
Let Assumptions A-3 to  A-8 be satisfied. Also assume that B-1 to B-6 hold. Then,
\begin{equation}
\lim_{n \rightarrow \infty} E \sup_{t_0 \leq t \leq t_1}|x_t-x_t^n|^q =0 \notag
\end{equation}
for all  $q < p$.
\end{theorem}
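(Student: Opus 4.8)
The plan is to first establish convergence in probability of $\sup_{t_0\le t\le t_1}|x_t-x_t^n|$ by a localisation argument, and then upgrade this to $\mathcal L^q$ convergence for every $q<p$ by a uniform integrability argument. For the latter, observe that $|x_t-x_t^n|^p\le 2^{p-1}(|x_t|^p+|x_t^n|^p)$, so by Lemma~\ref{lem:mb:rc} and Lemma~\ref{lem:mb:tes} we have $\sup_{n}E\sup_{t_0\le t\le t_1}|x_t-x_t^n|^p\le K<\infty$; hence the family $\{\sup_{t_0\le t\le t_1}|x_t-x_t^n|^q\}_{n}$ is bounded in $\mathcal L^{p/q}$ with $p/q>1$, therefore uniformly integrable, and together with convergence in probability this yields $E\sup_{t_0\le t\le t_1}|x_t-x_t^n|^q\to 0$.

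For the convergence in probability, fix $\delta>0$ and $R>0$ and use the stopping times of \eqref{eq:stopping:time} and the event $B(R)=\{C_R\le f(R)\}$. On the complement of $\{\pi_R<t_1\}\cup\{\pi_{nR}<t_1\}$ one has $\tau_{nR}\ge t_1$, so $\{\sup_{t_0\le t\le t_1}|x_t-x_t^n|>\delta\}\subseteq\{\pi_R<t_1\}\cup\{\pi_{nR}<t_1\}\cup(\Omega\setminus B(R))\cup\big(B(R)\cap\{\sup_{t_0\le t\le t_1\wedge\tau_{nR}}|x_t-x_t^n|>\delta\}\big)$, whence
\begin{align*}
P\Big(\sup_{t_0\le t\le t_1}|x_t-x_t^n|>\delta\Big)\le P(\pi_R<t_1)+P(\pi_{nR}<t_1)+P(\Omega\setminus B(R))+\delta^{-2}E\Big[I_{B(R)}\sup_{t_0\le t\le t_1\wedge\tau_{nR}}|x_t-x_t^n|^2\Big].
\end{align*}
By Chebyshev's inequality and Lemmas~\ref{lem:mb:rc} and \ref{lem:mb:tes}, $P(\pi_R<t_1)+P(\pi_{nR}<t_1)\le KR^{-p}$ uniformly in $n$, while $P(\Omega\setminus B(R))\to 0$ as $R\to\infty$ by \eqref{eq:cr}. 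Hence it suffices to prove that, for each fixed $R$, the last term tends to $0$ as $n\to\infty$; then letting first $n\to\infty$ and then $R\to\infty$ gives the convergence in probability.

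To prove, for fixed $R$, that $E[I_{B(R)}\sup_{t_0\le t\le t_1\wedge\tau_{nR}}|x_t-x_t^n|^2]\to 0$, write $e_t:=x_t-x_t^n$ and note that on $[t_0,\tau_{nR}]$ one has $|x_{t-}|,|x_{t-}^n|,|x^n_{\kappa(n,t)}|\le R$, hence $|e_t|\le 2R$, and Assumptions A-7, A-8 apply with the constant $C_R$, which on $B(R)$ is dominated by the \emph{deterministic} number $f(R)$. Applying It\^{o}'s formula to $|e_{t\wedge\tau_{nR}}|^2$, multiplying by $I_{B(R)}$, taking the supremum over $t\in[t_0,u]$ and then expectations, one must estimate a drift term, a Brownian stochastic integral, a Brownian correction term and (via \eqref{eq:y1y2} with exponent $2$) a jump term of the form $KE[I_{B(R)}\int_{t_0}^{u\wedge\tau_{nR}}\int_Z|\gamma_s(x_s,z)-\gamma_s^n(x^n_{\kappa(n,s)},z)|^2\nu(dz)\,ds]$, together with the genuinely martingale parts, which vanish in expectation. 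In each of these I would split the coefficient increment into three pieces, illustrated for the diffusion coefficient by
\begin{align*}
\sigma_s(x_s)-\sigma_s^n(x^n_{\kappa(n,s)})=\big(\sigma_s(x_s)-\sigma_s(x_s^n)\big)+\big(\sigma_s(x_s^n)-\sigma_s(x^n_{\kappa(n,s)})\big)+\big(\sigma_s(x^n_{\kappa(n,s)})-\sigma_s^n(x^n_{\kappa(n,s)})\big),
\end{align*}
and similarly for $b$ and $\gamma$. The first piece is controlled by Assumption A-7 (one-sided Lipschitz for $b$, Lipschitz for $\sigma$ and $\gamma$), producing terms bounded by $Kf(R)\int_{t_0}^{u}E[I_{B(R)}\sup_{t_0\le r\le s\wedge\tau_{nR}}|e_r|^2]\,ds$ which feed Gronwall's lemma. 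The third piece is dominated by $\sup_{|x|\le R}|\sigma_s^n(x)-\sigma_s(x)|^2$ (and its analogues for $b,\gamma$) and vanishes as $n\to\infty$ by Assumption B-5. The middle piece for $\sigma$ and $\gamma$ is again controlled by A-7, e.g.\ $|\sigma_s(x_s^n)-\sigma_s(x^n_{\kappa(n,s)})|^2\le C_R|x_s^n-x^n_{\kappa(n,s)}|^2$, and vanishes as $n\to\infty$ by Lemma~\ref{lem:tem:new:conv:onestep} (with $r=2$). The Brownian and jump stochastic integrals are handled by the Burkholder--Davis--Gundy inequality and Lemma~\ref{lem:maximal:inequality}, with Young's inequality used to absorb a fraction of $E[I_{B(R)}\sup_{t_0\le t\le u\wedge\tau_{nR}}|e_t|^2]$ into the left-hand side, and the contribution $E[I_{B(R)}|x_{t_0}-x_{t_0}^n|^2]$ tends to $0$ because $x_{t_0}^n\to x_{t_0}$ in probability (B-6) while $\sup_n E|x_{t_0}-x_{t_0}^n|^p<\infty$. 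Collecting everything one arrives at
\begin{align*}
E\Big[I_{B(R)}\sup_{t_0\le t\le u\wedge\tau_{nR}}|e_t|^2\Big]\le a_n(R)+K(R)\int_{t_0}^{u}E\Big[I_{B(R)}\sup_{t_0\le r\le s\wedge\tau_{nR}}|e_r|^2\Big]\,ds,
\end{align*}
with $a_n(R)\to 0$ as $n\to\infty$ and $K(R)$ independent of $n$, and Gronwall's lemma finishes this step.

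The main obstacle is the middle piece of the drift increment, $b_s(x_s^n)-b_s(x^n_{\kappa(n,s)})$: since $b$ is only one-sided locally Lipschitz (A-7), no Lipschitz bound is available and one cannot compare this term with $|x_s^n-x^n_{\kappa(n,s)}|$. Instead I would use that, by Assumption A-3, $b_s(\cdot)$ is continuous, hence uniformly continuous on the compact ball $\{|x|\le R\}$; since $x_s^n$ and $x^n_{\kappa(n,s)}$ lie in this ball for $s\le\tau_{nR}$, since $|x_s^n-x^n_{\kappa(n,s)}|\to 0$ in probability by Lemma~\ref{lem:tem:new:conv:onestep}, and since the increment is bounded on $B(R)$ by $2f(R)$ thanks to A-8, dominated convergence gives $E\int_{t_0}^{t_1\wedge\tau_{nR}}I_{B(R)}|b_s(x_s^n)-b_s(x^n_{\kappa(n,s)})|\,ds\to 0$; paired with the bounded factor $|e_s|\le 2R$ this contributes to $a_n(R)$. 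The remaining points --- that the Gronwall constant $K(R)$ does not depend on $n$ (this is precisely why one works on $B(R)$, where $C_R\le f(R)$ is deterministic), that the stopped stochastic integrals are true martingales so that the compensators of the jump terms may be replaced by $\nu(dz)\,ds$ inside the expectation, and the bookkeeping of the jump estimates via \eqref{eq:y1y2} and Lemma~\ref{lem:maximal:inequality} --- are routine once the above structure is in place.
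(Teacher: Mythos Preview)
Your overall architecture---localise via $\tau_{nR}$ and $B(R)$, apply It\^o's formula to $|e_t|^2$, run Gronwall on the stopped/localised quantity, then upgrade to $\mathcal L^q$ for $q<p$ by uniform integrability---matches the paper's proof exactly. The one genuine difference is the drift decomposition. You pair $e_s^n=x_s-x_s^n$ with a three-term splitting of $\bar b_s^n$, which leaves the middle piece $e_s^n\,(b_s(x_s^n)-b_s(x^n_{\kappa(n,s)}))$ needing the continuity argument via A-3. The paper instead splits the \emph{first} factor as $e_s^n=(x_s-x^n_{\kappa(n,s)})+(x^n_{\kappa(n,s)}-x_s^n)$ and pairs it with the two-term splitting $\bar b_s^n=(b_s(x_s)-b_s(x^n_{\kappa(n,s)}))+(b_s(x^n_{\kappa(n,s)})-b_s^n(x^n_{\kappa(n,s)}))$; the one-sided Lipschitz A-7 then applies directly to $(x_s-x^n_{\kappa(n,s)})(b_s(x_s)-b_s(x^n_{\kappa(n,s)}))$, and every remaining cross term carries the small factor $|x^n_{\kappa(n,s)}-x_s^n|$ against a drift bounded by $2C_R\le 2f(R)$ via A-8. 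This yields a purely algebraic estimate (eq.~\eqref{eq:ebn}) with no appeal to continuity. Your route also works, but note that because the modulus of continuity of $b_s(\omega,\cdot)$ on $\{|x|\le R\}$ is $\omega$-dependent, the step ``$|x_s^n-x^n_{\kappa(n,s)}|\to 0$ in probability implies $I_{B(R)}|b_s(x_s^n)-b_s(x^n_{\kappa(n,s)})|\to 0$ in $L^1(P\otimes dt)$'' needs a subsequence-plus-dominated-convergence argument rather than a direct one. (One small slip: after taking suprema in $t$ the stochastic integrals do \emph{not} vanish in expectation; they are exactly the terms you later, correctly, say are handled by BDG and Lemma~\ref{lem:maximal:inequality}.)
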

\begin{proof}
Let $e^n_t:=x_t-x^n_t$ and define
\begin{equation}
\bar b^n_t:= b_t(x_t)-b^n_t(x^n_{\kappa(n,t)}) , \bar \sigma^n_t:=\sigma_t(x_t)-\sigma_t^n(x^n_{\kappa(n,t)}), \bar{\gamma}^n_{t}(z):=\gamma_t(x_{t},z)-\gamma_t^n(x^n_{\kappa(n,t)},z) \label{eq:coeff_bar}
\end{equation}
almost surely for any $t \in [t_0,t_1]$. In this simplified notation, $e_t^n$ can be written as
\begin{align}  \label{eq:con:en}
e^n_t  = e^n_{t_0} + \int_{t_0}^t \bar b_s^n ds+\int_{t_0}^t  \bar \sigma_s^n dw_s +\int_{t_0}^t \int_Z \bar \gamma_s^n(z) \tilde{N}(ds, dz)
\end{align}
almost surely for any $t \in [t_0, t_1]$. Further, by using the stopping times defined in equation \eqref{eq:stopping:time} and random variables defined in \eqref{eq:cr}, let us partition the sample space $\Omega$ into two parts $\Omega_1$ and $\Omega_2$ where
\begin{align*}
\Omega_1& =\{\omega \in \Omega:\pi_R \leq t_1 \,  \text{or} \,  \pi_{nR} \leq t_1 \, \text{or} \,  C_R >f(R)\   \}
\\
& =\{\omega \in \Omega: \pi_R \leq t_1\} \cup \{\omega \in \Omega: \pi_{nR} \leq t_1\} \cup \{\omega \in \Omega: C_R >f(R)\}
\\
\Omega_2& =\Omega \backslash \Omega_1=\{\omega \in \Omega: \pi_R > t_1\} \cap \{\omega \in \Omega: \pi_{nR} > t_1\} \cap B(R)
\end{align*}
where $ B(R):= \{\omega \in \Omega: C_R \leq f(R)\}$ as defined in Assumption B-5.  Also note that $I_\Omega=I_{\Omega_1 \cup \Omega_2} \leq I_{\Omega_1}+I_{\Omega_2}$. By using this fact, for any $q <p$, one could write the following,
\begin{align} \label{eq:D1+D2}
E\sup_{t_0 \leq t \leq t_1}|e_t^n|^q   =  E\sup_{t_0 \leq t \leq t_1}|e_t^n|^q  I_{\Omega_1}+  E\sup_{t_0 \leq t \leq t_1}|e_t^n|^q  I_{\Omega_2} =: D_1+D_2.
\end{align}
 By the application of H\"older's inequality, Lemma \ref{lem:mb:rc} and Lemma \ref{lem:mb:tes} one could write,
\begin{align} \label{eq:D1}
D_1& :=E\sup_{t_0 \leq t \leq t_1}|e_t^n|^q  I_{\Omega_1} \leq \Big(E\sup_{t_0 \leq t \leq t_1}|e_t^n|^{q\frac{p}{q}}\Big)^\frac{q}{p} \Big(EI_{\Omega_1}\Big)^\frac{p-q}{p} \notag
\\
 & \leq  K  \Big(\frac{E|x_{\pi_R}|^p}{R^p} + \frac{E|x^n_{\pi_{nR}}|^p}{R^p} + P( \{\omega \in \Omega: C_R >f(R)\}) \Big)^\frac{p-q}{p} \notag
\\
 & \leq  K  \Big(\frac{1}{R^p} + P( \{\omega \in \Omega: C_R >f(R)\}) \Big)^\frac{p-q}{p}
\end{align}
where the constant $K>0$ does not depend on $n$. Having obtained  estimates for $D_1$, we now proceed to obtain the estimates for $D_2$. For this, we recall equation \eqref{eq:con:en} and use It\^o formula to obtain the following,
\begin{align} \label{eq:en2}
|e_t^n|^2 & =  |e_{t_0}^n|^2+2 \int_{t_0}^{t}  e^n_s \bar b_s^n ds + 2\int_{t_0}^{t}  e^n_s \bar \sigma^{n}_s  dw_s  +\int_{t_0}^{t}|\bar{\sigma}^n_s|^2ds \notag
\\
&+  2\int_{t_0}^{t} \int_{Z}  e^n_{s} \bar\gamma_s^n(z)   \tilde N(ds,dz) +\int_{t_0}^{t} \int_{Z}|\bar \gamma_s^n(z)|^2 N(ds,dz)
\end{align}
almost surely for any $t \in [t_0,t_1]$. Also, to estimate  the second term of \eqref{eq:en2},  one uses the following splitting,
\begin{align} \label{eq:ebn:splitting}
e^n_s \bar b^n_s
=(x_s-x_{\kappa(n,s)}^n) & (b_s(x_s)-b_s(x_{\kappa(n,s)}^n))+ (x_s-x_{\kappa(n,s)}^n)  (b_s(x_{\kappa(n,s)}^n)-b_s^n(x_{\kappa(n,s)}^n)) \notag
\\
&+ (x_{\kappa(n,s)}^n-x_s^n)(b_s(x_s)-b_s(x_{\kappa(n,s)}^n))\notag
\\
& +(x_{\kappa(n,s)}^n-x_s^n)(b_s(x_{\kappa(n,s)}^n)-b_s^n(x_{\kappa(n,s)}^n))
\end{align}
almost surely for any $s \in [t_0,t_1]$. Notice that $D_2$ is non-zero only on  $\Omega_2$, thus one can henceforth restrict all the calculations in the estimation of $D_2$ on the interval $[t_0, t_1 \wedge \tau_{nR})$ which also means that $|x_t| \vee |x_t^n| < R$ for any $t \in [t_0, t_1 \wedge \tau_{nR})$.  As a consequence, on the application of Assumption A-7 and Cauchy-Schwarz inequality, one obtains
\begin{align*}
e^n_s \bar b^n_s & \leq C_R |x_s-x_{\kappa(n,s)}^n|^2 + |x_s-x_{\kappa(n,s)}^n|  |b_s(x_{\kappa(n,s)}^n)-b_s^n(x_{\kappa(n,s)}^n)|
\\
&+ |x_{\kappa(n,s)}^n-x_s^n||b_s(x_s)-b_s(x_{\kappa(n,s)}^n)|+|x_{\kappa(n,s)}^n-x_s^n||b_s(x_{\kappa(n,s)}^n)-b_s^n(x_{\kappa(n,s)}^n)|
\end{align*}
almost surely for any $s \in [t_0, t_1 \wedge \tau_{nR})$.   By using Assumption A-8, this can further be estimated as
\begin{align} \label{eq:ebn}
e^n_s \bar b^n_s & \leq (2C_R+1) |x_s-x_s^n|^2+(2C_R+\frac{3}{2})|x_s^n-x_{\kappa(n,s)}^n|^2 +  |b_s(x_{\kappa(n,s)}^n)-b_s^n(x_{\kappa(n,s)}^n)|^2 \notag
\\
& \qquad \qquad+ 2C_R|x_s^n-x_{\kappa(n,s)}^n|
\end{align}
almost surely for any $s \in [t_0, t_1 \wedge \tau_{nR})$.  Now, by using the definition of $\Omega_2$ and of $\tau_{nR}$ in equation \eqref{eq:stopping:time}, one has
\begin{align} \label{eq:D2:new}
D_2& :=E\sup_{t_0 \leq t \leq t_1}|e_t^n|^q  I_{\Omega_2} \leq E\sup_{t_0 \leq t \leq t_1}|e_{t \wedge \tau_{nR}}^n|^q  I_{B(R)}.
\end{align}
Thus using the estimate obtained in \eqref{eq:ebn}, one obtains
\begin{align}
E  \sup_{t_0 \leq t \leq u} & |e_{t \wedge \tau_{nR}}^n|^2  I_{B(R)}  \leq  E|e^n_{t_0}|^2+ E(2C_R+1)\int_{t_0}^{u \wedge \tau_{nR}}|e^n_s|^2 I_{B(R)} ds \notag
\\
& + E(2C_R+\frac{3}{2})\int_{t_0}^{u \wedge \tau_{nR}}|x^n_s-x^n_{\kappa(n,s)}|^2I_{B(R)}ds  \notag
\\
&  + 2E C_R  \int_{t_0}^{u \wedge \tau_{nR}} |x_s^n-x_{\kappa(n,s)}^n|I_{B(R)}ds \notag
\\
&+E\int_{t_0}^{u \wedge \tau_{nR}}|b_s(x^n_{\kappa(n,s)})-b^n_s(x^n_{\kappa(n,s)})|^2 I_{B(R)} ds  \notag
\\
& + 2E\sup_{t_0 \leq t \leq u} \Big|\int_{t_0}^{t \wedge \tau_{nR}} I_{B(R)} e^n_s \bar \sigma^{n}_s  dw_s \Big| +E\int_{t_0}^{u \wedge \tau_{nR}}|\bar{\sigma}^n_s|^2I_{B(R)}ds \notag
\\
& +  2E\sup_{t_0 \leq t \leq u}\Big|\int_{t_0}^{t \wedge \tau_{nR}} \int_{Z} I_{B(R)} e^n_{s} \bar\gamma_s^n(z)   \tilde N(ds,dz)\Big| \notag
\\
& +E \sup_{t_0 \leq t \leq u}\int_{t_0}^{t \wedge \tau_{nR}} \int_{Z} I_{B(R)}|\bar \gamma_s^n(z)|^2 N(ds,dz) \notag
\\
&=:F_1+F_2+F_3+F_4+F_5+F_6+F_7+F_8+F_9 \label{eq:F1+F9}
\end{align}
for every $R>0$ and $u \in [t_0, t_1 \wedge \tau_{nR})$. Here $F_1:=E|e^n_{t_0}|^2$.  $F_2$ is estimated easily by
\begin{align} \label{eq:F2}
F_2 & :=E(2C_R+1)\int_{t_0}^{u \wedge \tau_{nR}}|e^n_s|^2I_{B(R)}ds  \notag
\\
&\leq (2f(R)+1)\int_{t_0}^{u}E\sup_{t_0 \leq r \leq s}|e_{r \wedge \tau_{nR}}^n|^2 I_{B(R)}ds
\end{align}
for every $R>0$ and $u \in [t_0, t_1 \wedge \tau_{nR})$.
Further,
\begin{align} \label{eq:F3}
F_3 & :=E(2C_R+\frac{3}{2})\int_{t_0}^{u \wedge \tau_{nR}}|x^n_s-x^n_{\kappa(n,s)}|^2I_{B(R)}ds  \notag
\\
& \leq (f(R)+1)K\sup_{t_0 \leq t \leq t_1} E|x^n_t-x^n_{\kappa(n,t)}|^2
\end{align}
and similarly,  term $F_4$ can be estimated by
\begin{align} \label{eq:F4}
F_4&:=2E C_R  \int_{t_0}^{u \wedge \tau_{nR}} |x_s^n-x_{\kappa(n,s)}^n|I_{B(R)} ds \leq f(R) K \sup_{t_0 \leq t \leq t_1} E|x_t^n-x_{\kappa(n,t)}^n|
\end{align}
for every $R>0$. Again,  term $F_5$ has following estimate,
\begin{align} \label{eq:F5}
F_5& := E\int_{t_0}^{u \wedge \tau_{nR}}|b_s(x^n_{\kappa(n,s)})-b^n_s(x^n_{\kappa(n,s)})|^2 I_{B(R)} ds  \notag
\\
&\leq  E\int_{t_0}^{t_1}I_{\{t_0 \leq s <\tau_{nR}\}} I_{B(R)}|b_s(x^n_{\kappa(n,s)})-b^n_s(x^n_{\kappa(n,s)})|^2 ds.
\end{align}
To estimate the term $F_6$, one uses Burkholder-Davis-Gundy inequality to write
\begin{align*}
F_6 & :=2E\sup_{t_0 \leq t \leq u}\Big|\int_{t_0}^{t \wedge \tau_{nR}} I_{B(R)} e^n_s \bar \sigma^{n}_s  dw_s \Big|  \leq K E\Big(\int_{t_0}^{u \wedge \tau_{nR}}  I_{B(R)}|e^n_s|^2 |\bar \sigma^{n}_s|^2  ds \Big)^\frac{1}{2}
\\
& \leq K E\sup_{t_0 \leq s \leq u}|e^n_{s\wedge \tau_{nR}}|I_{B(R)} \Big(\int_{t_0}^{u \wedge \tau_{nR} }   I_{B(R)} |\bar \sigma^{n}_s|^2  ds \Big)^\frac{1}{2}
\end{align*}
which on the application of Young's inequality gives
\begin{align} \label{eq:F6+F7:a}
F_6+F_7 \leq  \frac{1}{8} E\sup_{t_0 \leq s \leq u}|e^n_{s\wedge \tau_{nR}}|^2 I_{B(R)} + K E\int_{t_0}^{u \wedge \tau_{nR}}   I_{B(R)} |\bar \sigma^{n}_s|^2  ds
\end{align}
for any $R>0$  and $u \in [t_0, t_1]$ where constant $K>0$ does not depend on $R$ and $n$. In order to estimate the second term of the above inequality, one uses the following splitting of $\bar{\sigma}_s^n$,
\begin{align} \label{eq:sigma:splitting}
\bar{\sigma}^n_s &  = (\sigma_s(x_s)-\sigma_s(x_s^n))+(\sigma_s(x_s^n)-\sigma_s(x^n_{\kappa(n,s)}))+(\sigma_s(x^n_{\kappa(n,s)})-\sigma_s^n(x^n_{\kappa(n,s)}))
\end{align}
almost surely for any $s \in [t_0, t_2]$. As before, one again notices that $|x_s|\leq R$ and $|x_s^n|\leq R$ whenever $s \in [t_0, t_1 \wedge \tau_{nR})$. Thus on the application of Assumption A-7, one obtains
\begin{align*}
|\bar{\sigma}^n_s|^2 \leq 3C_R|e_s^n|^2+3C_R|x_s^n-x^n_{\kappa(n,s)}|^2+3|\sigma_s(x^n_{\kappa(n,s)})-\sigma_s^n(x^n_{\kappa(n,s)})|^2
\end{align*}
almost surely $s \in [t_0, t_1 \wedge \tau_{nR})$. Hence substituting this estimate in inequality \eqref{eq:F6+F7:a} gives
\begin{align} \label{eq:F6+F7}
F_6+F_7 & \leq  \frac{1}{8} E\sup_{t_0 \leq s \leq u}|e^n_{s\wedge \tau_{nR}}|^2 I_{B(R)} + K  f(R) \int_{t_0}^{u}  E\sup_{t_0 \leq r \leq s}|e^n_{r\wedge \tau_{nR}}|^2 I_{B(R)}  ds \notag
\\
&+ K  f(R) \sup_{t_0 \leq s \leq t_1} E |x_s^n-x^n_{\kappa(n,s)}|^2  \notag
\\
& + K E\int_{t_0}^{t_1}  I_{\{t_0 \leq s < \tau_{nR}\}} I_{B(R)}  |\sigma_s(x^n_{\kappa(n,s)})-\sigma_s^n(x^n_{\kappa(n,s)})|^2 ds
\end{align}
for any $u \in [t_0, t_1]$. Further, one proceeds as above in the similar way to the derivation of \eqref{eq:F6+F7:a} and uses Lemma \ref{lem:maximal:inequality} to obtain
\begin{align} \label{eq:F8+F9:o}
F_8+F_9 \leq & \frac{1}{8} E\sup_{t_0 \leq s \leq u} |e^n_{s  \wedge \tau_{nR}}|^2 I_{B(R)} \notag
\\
& + K  E\int_{t_0}^{u \wedge \tau_{nR} }\int_{Z}  I_{B(R)}  |\bar\gamma_s^n(z)|^2  \nu(dz) ds
\end{align}
for any $u \in [t_0, t_1]$. In order to estimate the second term of the above inequality, one uses the following splitting,
\begin{align} \label{eq:gamma:splitting}
\hspace{-1mm}\bar{\gamma}^n_s(z) = & (\gamma(x_s,z)-\gamma_s(x_s^n,z))+(\gamma_s(x_s^n,z)-\gamma_s(x^n_{\kappa(n,s)},z) \notag
\\
&+(\gamma_s(x^n_{\kappa(n,s)},z)-\gamma_s^n(x^n_{\kappa(n,s)},z))
\end{align}
almost surely for any $s \in [t_0,t_1]$.  Thus, by using the Assumption  A-7, one has
\begin{align} \label{eq:F8+F9}
F_8+F_9  \leq & \frac{1}{8} E\sup_{t_0 \leq s \leq u} |e^n_{s  \wedge \tau_{nR}}|^2 I_{B(R)} + K f(R) E \int_{t_0}^{u } E\sup_{t_0 \leq r \leq s} |e^n_{r  \wedge \tau_{nR}}|^2 I_{B(R)}  ds \notag
\\
&  + K  f(R) \sup_{t_0\leq s \leq t_1}E|x_s^n-x^n_{\kappa(n,s)}|^2  \notag
\\
& + K  E \int_{t_0}^{t_1 } \int_Z  I_{\{t_0 \leq s <  \tau_{nR}\}} I_{B(R)} |\gamma_s(x^n_{\kappa(n,s)},z)-\gamma_s^n(x^n_{\kappa(n,s)},z)|^2  \nu(dz)   ds
\end{align}
for any $u \in [t_0, t_1]$. On combining  estimates obtained in \eqref{eq:F2}, \eqref{eq:F3}, \eqref{eq:F4}, \eqref{eq:F6+F7} and \eqref{eq:F8+F9} in \eqref{eq:F1+F9} and then applying Gronwall's inequality, one obtains
\begin{align}
E  &\sup_{t_0 \leq t \leq t_1}|e_{t \wedge \tau_{nR}}^n|^2 I_{B(R)} \leq  \exp(Kf(R)) \Big\{E|e_{t_0}^n|^2 + Kf(R)\sup_{t_0\leq s \leq t_1}E|x^n_{s}-x^n_{\kappa(n,s)}|^2 \notag
\\
& \qquad \qquad + Kf(R)\big(\sup_{t_0\leq s \leq t_1}E|x^n_{s}-x^n_{\kappa(n,s)}|^2\big)^\frac{1}{2} \notag
\\
&+K E\int_{t_0}^{t_1}I_{\{t_0 \leq s <  \tau_{nR}\}}I_{B(R)}|b_s(x^n_{\kappa(n,s)})-b^n_s(x^n_{\kappa(n,s)})|^2 ds \notag
\\
&+ K E\int_{t_0}^{t_1}I_{\{t_0 \leq s <  \tau_{nR}\}}I_{B(R)}|\sigma_s(x^n_{\kappa(n,s)})-\sigma^n_s(x^n_{\kappa(n,s)})|^2 ds \notag
\\
&  + K E\int_{t_0}^{t_1} \int_Z I_{\{t_0 \leq s < \tau_{nR}\}}I_{B(R)}|\gamma_s(x^n_{\kappa(n,s)}, z)-\gamma^n_s(x^n_{\kappa(n,s)},z)|^2 \nu(dz)ds  \Big\}.\notag
\end{align}
Hence, by the application of Lemma \ref{lem:tem:new:conv:onestep},  Assumptions B-5 and B-6, one obtains
\begin{align*}
E  \sup_{t_0 \leq t \leq t_1}|e_{t \wedge \tau_{nR}}^n|^2 I_{B(R)} \to 0 \mbox{\,\,as\,\,} n \to \infty
\end{align*}
for every $R>0$. Consequently $\sup_{t_0 \leq t \leq t_1}|e_{t \wedge \tau_{nR}}^n|I_{B(R)}   \to 0$ in probability, as $n\to \infty$.
By Lemma \ref{lem:mb:rc} and  Lemma \ref{lem:mb:tes}, we have that the sequence of random variables,  $(\sup_{t_0 \leq t \leq t_1}|e_{t \wedge \tau_{nR}}^n|^q I_{B(R)} )_{n \in \mathbb{N}}$ is  uniformly integrable for any $q<p$. Hence, for each $R >0 $ we have
$$
 E \sup_{t_0 \leq t \leq t_1}|e_{t \wedge \tau_{nR}}^n|^q I_{B(R)}  \to 0, \text{as $n\to \infty$}
$$
which implies from inequality \eqref{eq:D2:new} that $D_2 \to 0$ as $n\to \infty$ for every $R>0$.
Also by choosing sufficiently large $R>0$ in inequality \eqref{eq:D1} along with equation \eqref{eq:cr}, one obtains $D_1 \to 0$. This complete the proof.
\end{proof}
\subsection{Rate of Convergence}
In order to obtain rate of convergence of the scheme \eqref{eq:em:sdewrc}, one replaces Assumption A-7 by the following assumptions.
\begin{assumptionA} \label{as:lipschitz:rc}
There exist constants $C>0$, $q \geq 2$ and $\chi>0$ such that
\begin{align}
 (x-\bar{x})( b_t(x)-b_t(\bar{x})) \vee |\sigma_t(x)-\sigma_t(\bar{x})|^2 &\vee \int_{Z} |\gamma_t(x,z)-\gamma_t(\bar{x},z)|^2 \nu(dz)  \leq  C |x-\bar{x}|^2  \notag
 \\
  \int_{Z} |\gamma_t(x,z)-\gamma_t(\bar{x},z)|^q \nu(dz) & \leq C |x-\bar{x}|^q \notag
 \\
 |b_t(x)-b_t(\bar{x})|^2 & \leq C(1+|x|^\chi+|\bar{x}|^\chi)|x-\bar{x}|^2 \label{eq:b}
\end{align}
almost surely for any $t \in [t_0,t_1]$, $x,\bar{x} \in \mathbb{R}^d$  and a $\delta \in (0,1)$ such that $\max\big\{(\chi+2)q,\frac{q \chi}{2}\frac{q+\delta}{\delta}\big\}\leq p$.
\end{assumptionA}
\begin{remark}\label{rem:poly:b:rc}
Due to \eqref{eq:b} and Assumption A-8, one immediately obtains
\begin{align*}
|b_t(x)|^2 \leq K(1+|x|^{\chi+2})
\end{align*}
almost surely for any $t \in [t_0, t_1]$ and $x \in \mathbb{R}^d$.
\end{remark}

Furthermore, one replaces Assumption B-5 by the following assumption.
\begin{assumptionB} \label{as:coeff:conv:rate:rc}
There exists a constant $C>0$ such that
\begin{align}
 E\int_{t_0}^{t_1}\{|b^n_t(x_{\kappa(n,t)}^n)-b_t(x_{\kappa(n,t)}^n)|^q +|\sigma^n_t(x_{\kappa(n,t)}^n)-\sigma_t(x_{\kappa(n,t)}^n)|^q\}dt &\leq C n^{-\frac{q}{q+\delta}} \notag
\\
 E \int_{t_0}^{t_1}\Big(\int_{Z}|\gamma^n_t(x_{\kappa(n,t)}^n,z)-\gamma_t(x_{\kappa(n,t)}^n,z)|^{\zeta}\nu(dz)\Big)^{\frac{q}{\zeta}} dt &\leq C n^{-\frac{q}{q+\delta}} \notag
\end{align}
for $\zeta=2,q$.
\end{assumptionB}

Finally, Assumption B-6  is replaced by the following assumption.
\begin{assumptionB} \label{as:initial:rate:rc}
There exists a constant $C>0$ such that
\begin{align}
E|x_{t_0}-x_{t_0}^n|^q \leq C n^{-\frac{q}{q+\delta}}. \notag
\end{align}
\end{assumptionB}

\begin{theorem} \label{thm:rate:rc:em}
Let Assumptions A-3 to  A-6, A-8 and A-9 be satisfied. Also suppose that Assumptions B-1 to B-4,  B-7 and B-8 hold. Then
\begin{align}
E\sup_{t_0 \leq t \leq t_1}|x_{t}-x_{t}^n|^q \leq K n^{-\frac{q}{q+\delta}} \notag
\end{align}
where constant $K>0$ does not depend on $n$.
\end{theorem}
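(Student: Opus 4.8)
The plan is to follow the same It\^o-formula-on-$|e^n_t|^2$ strategy as in the proof of Theorem~\ref{thm:conv:rc}, but now exploiting the quantitative Assumptions A-9, B-7, B-8 to obtain an explicit rate rather than merely qualitative convergence. Since A-9 is global, there is no need to introduce the stopping times $\tau_{nR}$ or the exceptional event $\Omega_1$; the whole argument is carried out directly on $[t_0,t_1]$. First I would recall $e^n_t = x_t - x^n_t$ and, as in \eqref{eq:en2}, write $|e^n_t|^2$ via It\^o's formula with the six terms (initial value, drift, $dw$-martingale, $|\bar\sigma^n|^2$, compensated jump martingale, $\int|\bar\gamma^n|^2 N$). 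The key structural move is the four-way splitting \eqref{eq:ebn:splitting} of $e^n_s\bar b^n_s$ together with the analogous splittings \eqref{eq:sigma:splitting}, \eqref{eq:gamma:splitting} of $\bar\sigma^n_s$ and $\bar\gamma^n_s(z)$; the one-sided Lipschitz / Lipschitz parts of A-9 control the "diagonal" pieces by $C|e^n_s|^2$ and $C|x^n_s-x^n_{\kappa(n,s)}|^2$, while the coefficient-difference pieces $b_s(x^n_{\kappa(n,s)})-b^n_s(x^n_{\kappa(n,s)})$ etc. are exactly what B-7 bounds.

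Next I would raise everything to the power $q/2$ (so that the target norm $E\sup|e^n_t|^q$ appears), take suprema over $[t_0,u]$ and expectations, and estimate the martingale terms by the Burkholder--Davis--Gundy inequality and Lemma~\ref{lem:maximal:inequality}, absorbing the resulting $\sup$-terms with small constants (the $\tfrac18$-trick) into the left-hand side exactly as for $F_6+F_7$ and $F_8+F_9$. The terms that must be shown to be $O(n^{-q/(q+\delta)})$ are: (i) the initial-value term, handled by B-8; (ii) the coefficient-approximation terms, handled by B-7 after using H\"older in the time variable to pass between the $L^2$-in-$z$ and $L^q$-in-$z$ norms with the exponent $q+\delta$; and (iii) the one-step-error terms $E|x^n_s-x^n_{\kappa(n,s)}|^r$, which by Lemma~\ref{lem:tem:new:conv:onestep} are $O(n^{-1})\le O(n^{-q/(q+\delta)})$ since $q/(q+\delta)<1$. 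The remaining piece is the drift cross-term $(x^n_{\kappa(n,s)}-x^n_s)(b_s(x_s)-b_s(x^n_{\kappa(n,s)}))$: here the polynomial Lipschitz bound \eqref{eq:b} gives a factor $(1+|x_s|^\chi+|x^n_{\kappa(n,s)}|^\chi)^{1/2}$, and one uses H\"older's inequality with a carefully chosen triple of exponents (splitting $|x^n_s-x^n_{\kappa(n,s)}|$, the polynomial weight, and a factor of $|e^n_s|$ or $|x_s-x^n_{\kappa(n,s)}|$), then invokes the moment bounds of Lemma~\ref{lem:mb:rc} and Lemma~\ref{lem:mb:tes} — this is precisely where the constraint $\max\{(\chi+2)q,\tfrac{q\chi}{2}\tfrac{q+\delta}{\delta}\}\le p$ is consumed, so that all the moments invoked are finite and the one-step error still enters at rate $n^{-q/(q+\delta)}$.

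After these estimates the inequality reads
\begin{align*}
E\sup_{t_0\le t\le u}|e^n_t|^q \le \tfrac12 E\sup_{t_0\le t\le u}|e^n_t|^q + K n^{-\frac{q}{q+\delta}} + K\int_{t_0}^{u} E\sup_{t_0\le r\le s}|e^n_r|^q\, ds,
\end{align*}
and rearranging (using $E\sup|e^n_t|^q<\infty$, which holds for each fixed $n$ by the remark following Assumption B-4 together with Lemma~\ref{lem:mb:rc}) and applying Gronwall's lemma yields $E\sup_{t_0\le t\le t_1}|e^n_t|^q\le K n^{-q/(q+\delta)}$ with $K$ independent of $n$.

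The main obstacle is the drift cross-term with the super-linearly growing $b$: one must juggle the polynomial weight $1+|x_s|^\chi+|x^n_{\kappa(n,s)}|^\chi$, the one-step increment $|x^n_s-x^n_{\kappa(n,s)}|$ (whose $L^r$-norm decays like $n^{-1/r}$, so one wants to put as much "mass" on it as the moment budget permits), and the error factor $|e^n_s|$ under a single H\"older inequality while keeping every exponent within the range in which Lemma~\ref{lem:mb:rc}, Lemma~\ref{lem:mb:tes} and Lemma~\ref{lem:tem:new:conv:onestep} apply; this is exactly the bookkeeping that the hypothesis $\max\{(\chi+2)q,\tfrac{q\chi}{2}\tfrac{q+\delta}{\delta}\}\le p$ is designed to make feasible, and verifying that the chosen exponents respect it is the delicate point of the proof. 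The jump terms require the extra care of using Lemma~\ref{lem:maximal:inequality} in the form that, for the relevant powers between $2$ and $p$, produces both a $(\int\int|\cdot|^2\nu\,ds)^{q/2}$ term and a $\int\int|\cdot|^q\nu\,ds$ term, the latter being controlled by the second display of A-9 and the second display of B-7 (the $\zeta=q$ case); apart from this, the jump estimates parallel the diffusion ones.
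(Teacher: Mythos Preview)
Your overall plan---drop the localization since A-9 is global, feed the quantitative bounds from A-9, B-7, B-8 and Lemma~\ref{lem:tem:new:conv:onestep} into a Gronwall argument---is correct, but two of your concrete choices diverge from the paper, and the first one is a genuine obstacle rather than a matter of taste.

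\textbf{It\^o to $|e^n_t|^2$ then raise to $q/2$ versus It\^o to $|e^n_t|^q$ directly.} The paper applies It\^o's formula directly to $|e^n_t|^q$ (equation \eqref{eq:en}). Via the Taylor estimate \eqref{eq:y1y2}, the jump correction then appears as $\int_{t_0}^t\int_Z\big(|e^n_s|^{q-2}|\bar\gamma^n_s(z)|^2+|\bar\gamma^n_s(z)|^q\big)N(ds,dz)$; taking expectation replaces $N$ by $\nu(dz)ds$ and the two pieces $H_1,H_2$ are controlled exactly by the $L^2$- and $L^q$-Lipschitz bounds in A-9. In your scheme the jump correction becomes $E\big(\int_{t_0}^{t_1}\int_Z|\bar\gamma^n_s(z)|^2\,N(ds,dz)\big)^{q/2}$. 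Splitting $N=\tilde N+\nu\,ds$ and applying Lemma~\ref{lem:maximal:inequality} to the $\tilde N$-part at power $q/2$ produces a term $E\big(\int_{t_0}^{t_1}\int_Z|\bar\gamma^n_s(z)|^4\nu(dz)ds\big)^{q/4}$, and A-9 supplies no $L^4$-Lipschitz bound on $\gamma$; for $q\in(2,4)$ it cannot be recovered by interpolating between the $L^2$ and $L^q$ bounds either. Using growth bounds (A-5/A-6) instead gives only $O(1)$, not $O(n^{-q/(q+\delta)})$, so the rate is lost. Your reference to ``exactly as for $F_8+F_9$'' does not transfer, because there $q=2$ and no power was taken outside the $N$-integral.

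\textbf{The drift splitting.} The paper does \emph{not} reuse the four-way splitting \eqref{eq:ebn:splitting}; it introduces the three-way splitting \eqref{eq:ebn:splitting:rate}, keeping the factor $e^n_s$ intact and decomposing only $\bar b^n_s$ as $(b_s(x_s)-b_s(x^n_s))+(b_s(x^n_s)-b_s(x^n_{\kappa(n,s)}))+(b_s(x^n_{\kappa(n,s)})-b^n_s(x^n_{\kappa(n,s)}))$. After multiplying by $|e^n_s|^{q-2}$ and Young's inequality this gives $K|e^n_s|^q+K|b_s(x^n_s)-b_s(x^n_{\kappa(n,s)})|^q+K|b_s(x^n_{\kappa(n,s)})-b^n_s(x^n_{\kappa(n,s)})|^q$; the middle term is then bounded by the polynomial Lipschitz condition and a single H\"older step with exponents $(q+\delta)/\delta$ and $(q+\delta)/q$ (the $G_3$ estimate), which is precisely where $\frac{q\chi}{2}\frac{q+\delta}{\delta}\le p$ is used. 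Your four-way splitting and the triple-H\"older on $(x^n_{\kappa(n,s)}-x^n_s)(b_s(x_s)-b_s(x^n_{\kappa(n,s)}))$ can in principle be pushed through, but it is more involved and drags the true solution $x_s$ into the polynomial weight unnecessarily.
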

\begin{proof}
First of all, let us recall the notations used in the proof of Theorem \ref{thm:conv:rc}. By the application of It\^o formula, one obtains
\begin{align}
|e_t^n|^q &= |e_{t_0}^n|^q+ q \int_{t_0}^{t} |e_s^n|^{q-2} e_s^n \bar b_s^n ds + q\int_{t_0}^{t} |e_s^n|^{q-2} e_s^n \bar \sigma_s^n dw_s  \notag
\\
& + \frac{q(q-2)}{2} \int_{t_0}^{t} |e_s^n|^{q-4}|\bar \sigma_s^{n*} e_s^n|^2ds +\frac{q}{2}\int_{t_0}^{t} |e_s^n|^{q-2}|\bar \sigma_s^n|^2 ds  \notag
\\
&+  q\int_{t_0}^{t} \int_{Z} |e_s^n|^{q-2} e_s^n \bar \gamma_s^n(z)    \tilde N(ds,dz) \notag
\\
+\int_{t_0}^{t} &\int_{Z}\{ |e_s^n+\bar \gamma_s^n(z)|^q-|e_s^n|^q-q|e_s^n|^{q-2} e_s^n \bar \gamma_s^n(z) \}N(ds,dz) \label{eq:en}
\end{align}
almost surely for any $t \in [t_0,t_1]$. In Theorem \ref{thm:conv:rc}, the splitting given in \eqref{eq:ebn:splitting} is used to prove the $\mathcal{L}^q$ convergence of the scheme \eqref{eq:em:sdewrc}. In order to obtain a rate of convergence of scheme \eqref{eq:em:sdewrc},  one uses the following splitting,
\begin{align} \label{eq:ebn:splitting:rate}
e^n_s \bar b^n_s =& (x_s-x_s^n)(b_s(x_s)-b_s(x^n_s)) + (x_s-x^n_s)(b_s(x^n_s)-b_s(x^n_{\kappa(n,s)})) \notag
\\
& +(x_s-x^n_s)(b_s(x^n_{\kappa(n,s)})-b_s^n(x^n_{\kappa(n,s)}))
\end{align}
which on the application of Assumption A-9, Cauchy-Schwarz inequality and Young's inequality gives
\begin{align} \label{eq:en2:rate}
|e^n_s|^{q-2}e^n_s \bar b^n_s & \leq  K|e^n_s|^q + K|b_s(x^n_s)-b_s(x^n_{\kappa(n,s)})|^q  + K|b_s(x^n_{\kappa(n,s)})-b_s^n(x^n_{\kappa(n,s)})|^q
\end{align}
almost surely for any $s \in[t_0,t_1]$.  Therefore by taking suprema over $[t_0, u]$ for any $u \in [t_0,t_1]$ and expectations, one has
\begin{align} \label{eq:G1+G9}
E\sup_{t_0 \leq  t \leq u}|e_t^n|^q  \leq & E|e_{t_0}^n|^q+ K E\int_{t_0}^{u} |e_s^n|^q ds + K E\int_{t_0}^{u}  |b_s(x^n_s)-b_s(x^n_{\kappa(n,s)})|^q ds \notag
\\
& \hspace{-2cm} + K E\int_{t_0}^{u} |b_s(x^n_{\kappa(n,s)})-b_s^n(x^n_{\kappa(n,s)})|^q ds  + qE\sup_{t_0 \leq t \leq u}\Big|\int_{t_0}^{t} |e_s^n|^{q-2} e_s^n \bar \sigma_s^n dw_s\Big| \notag
\\
& + \frac{q(q-2)}{2} E\int_{t_0}^{u} |e_s^n|^{q-4}|\bar \sigma_s^{n*} e_s^n|^2ds +\frac{q}{2}E\int_{t_0}^{u} |e_s^n|^{q-2}|\bar \sigma_s^n|^2 ds  \notag
\\
&+  qE\sup_{t_0 \leq t \leq u}\Big|\int_{t_0}^{t} \int_{Z} |e_s^n|^{q-2} e_s^n \bar \gamma_s^n(z)    \tilde N(ds,dz)\Big| \notag
\\
&+E\sup_{t_0 \leq t \leq u}\int_{t_0}^{t} \int_{Z}\{ |e_s^n|^{q-2}|\bar \gamma_s^n(z)|^2+|\bar \gamma_s^n(z)|^q \}N(ds,dz) \notag
\\
=& G_1+G_2+G_3+G_4+G_5+G_6+G_7+G_8+G_9
\end{align}
for any $u \in [t_0, t_1]$. Here $G_1:=E|e_{t_0}^n|^q$ and $G_2$ can be estimated by
\begin{align} \label{eq:G2}
G_2:= K E\int_{t_0}^{u} |e_s^n|^q ds \leq K \int_{t_0}^{u} E\sup_{t_0 \leq r \leq s}|e_r^n|^q ds
\end{align}
for any $u \in [t_0, t_1]$. By the application of Assumption A-9, H\"older's inequality  and Lemma \ref{lem:mb:tes}, $G_3$ can be estimated by
\begin{align} \label{eq:G3}
G_3 :=K E\int_{t_0}^{u}  &|b_s(x^n_s)-b_s(x^n_{\kappa(n,s)})|^q ds \notag
\\
&\hspace{-2cm}\leq K \int_{t_0}^{u} \left( 1+E|x^n_s|^{\chi\frac{q}{2}\frac{q+\delta}{\delta}}+E|x^n_{\kappa(n,s)}|^{\chi\frac{q}{2}\frac{q+\delta}{\delta}}\right)^\frac{\delta}{q+\delta} \left( E|x^n_s-x^n_{\kappa(n,s)}|^{q+\delta}\right)^\frac{q}{q+\delta} ds \notag
\\
& \leq K \int_{t_0}^{t_1}  \left( E|x^n_s-x^n_{\kappa(n,s)}|^{q+\delta}\right)^\frac{q}{q+\delta} ds.
\end{align}
Further, $G_4$ can be estimated by
\begin{align} \label{eq:G4}
G_4&:=K E\int_{t_0}^{u} |b_s(x^n_{\kappa(n,s)})-b_s^n(x^n_{\kappa(n,s)})|^q ds \notag
\\
& \leq K E\int_{t_0}^{t_1} |b_s(x^n_{\kappa(n,s)})-b_s^n(x^n_{\kappa(n,s)})|^q ds.
\end{align}
By  the application of Burkholder-Davis-Gundy inequality, one obtains
\begin{align*}
G_5 & := qE\sup_{t_0 \leq t \leq u}\Big|\int_{t_0}^{t} |e_s^n|^{q-2} e_s^n \bar \sigma_s^n dw_s\Big| \leq K E\Big(\int_{t_0}^{u} |e_s^n|^{2q-2}|\bar \sigma_s^n|^2 ds\Big)^\frac{1}{2}
\\
& \leq K E\sup_{t_0 \leq s \leq u}|e_s^n|^{q-1}\Big(\int_{t_0}^{u} |\bar \sigma_s^n|^2 ds\Big)^\frac{1}{2}
\end{align*}
which due to Young's inequality  and H\"older's inequality gives
\begin{align} \label{eq:G5:old}
G_5 \leq \frac{1}{8} E\sup_{t_0 \leq s \leq u}|e_s^n|^{q} + K E\int_{t_0}^{u} |\bar \sigma_s^n|^q ds
\end{align}
for any $u \in [t_0, t_1]$. Further, due to Cauchy-Schwarz inequality and Young's inequality, $G_6$ and $G_7$ can be estimated together by
\begin{align} \label{eq:G6+G7}
G_6+G_7&:=\frac{q(q-2)}{2} E\int_{t_0}^{u} |e_s^n|^{q-4}|\bar \sigma_s^{n*} e_s^n|^2ds +\frac{q}{2}E\int_{t_0}^{u} |e_s^n|^{q-2}|\bar \sigma_s^n|^2 ds \notag
\\
& \leq K  E\int_{t_0}^{u} |e_s^n|^{q-2}|\bar \sigma_s^n|^2 ds \leq K \int_{t_0}^{u} E\sup_{t_0 \leq r \leq s}|e_r^n|^{q} ds+ K E\int_{t_0}^{u}|\bar \sigma_s^n|^q ds
\end{align}
for any $u \in [t_0, t_1]$. On combining the estimated from \eqref{eq:G5:old} and \eqref{eq:G6+G7}, one has
\begin{align} \label{eq:G5+G6+G7:old}
G_5+G_6+G_7  \leq \frac{1}{8} E\sup_{t_0 \leq s \leq u}|e_s^n|^{q} + K \int_{t_0}^{u} E\sup_{t_0 \leq r \leq s}|e_r^n|^{q} ds+ K E\int_{t_0}^{u}|\bar \sigma_s^n|^q ds
\end{align}
for any $u \in [t_0, t_1]$. Now, one uses the splitting of $\bar \sigma_s^n$ given in \eqref{eq:sigma:splitting} along with Assumption A-9 to write
\begin{align} \label{eq:G5+G6+G7}
G_5+G_6+G_7  & \leq \frac{1}{8} E\sup_{t_0 \leq s \leq u}|e_s^n|^{q} + K \int_{t_0}^{u} E\sup_{t_0 \leq r \leq s}|e_r^n|^{q} ds+ K \int_{t_0}^{t_1}E|x_s^n-x_{\kappa(n,s)}^n|^q ds \notag
\\
&+ K E\int_{t_0}^{t_1}|\sigma_s(x_{\kappa(n,s)}^n)-\sigma_s^n(x_{\kappa(n,s)}^n)|^q ds
\end{align}
for any $u \in [t_0, t_1]$. Further, for estimating $G_8$, one uses the splitting of $\bar\gamma_s^n(z)$ given in \eqref{eq:gamma:splitting} to write
\begin{align}
G_8 & \leq E \sup_{t_0 \leq t \leq u} \Big| \int_{t_0}^{t} \int_{Z} |e_s^n|^{q-2} e_s^n  \{\gamma_s(x_s,z)-\gamma_s(x_s^n,z)\}    \tilde N(ds,dz) \Big| \notag
\\
& +E \sup_{t_0 \leq t \leq u} \Big| \int_{t_0}^{t} \int_{Z} |e_s^n|^{q-2} e_s^n \{ \gamma_s(x_s^n,z)-\gamma_s(x_{\kappa(n,s)}^n,z)   \} \tilde N(ds,dz) \Big| \notag
\\
& + E \sup_{t_0 \leq t \leq u} \Big| \int_{t_0}^{t} \int_{Z} |e_s^n|^{q-2} e_s^n \{\gamma_s(x_{\kappa(n,s)}^n,z)-\gamma_s^n(x_{\kappa(n,s)}^n,z) \}    \tilde N(ds,dz) \Big| \notag
\end{align}
which due to Lemma \ref{lem:maximal:inequality} gives
\begin{align}
G_8 & \leq E  \Big( \int_{t_0}^{u} \int_{Z} |e_s^n|^{2q-2}   |\gamma_s(x_s,z)-\gamma_s(x_s^n,z)|^2  \nu(dz)ds \Big)^\frac{1}{2} \notag
\\
& +E \Big( \int_{t_0}^{u} \int_{Z} |e_s^n|^{2q-2} |\gamma_s(x_s^n,z)-\gamma_s(x_{\kappa(n,s)}^n,z) |^2    \nu(dz)ds \Big)^\frac{1}{2} \notag
\\
& + E  \Big( \int_{t_0}^{u} \int_{Z} |e_s^n|^{2q-2}  |\gamma_s(x_{\kappa(n,s)}^n,z)-\gamma_s^n(x_{\kappa(n,s)}^n,z)|^2    \nu(dz)ds \Big)^\frac{1}{2} \notag
\end{align}
for any $u \in [t_0,t_1]$. Then on the application of Young's inequality and H\"older's inequality, one obtains,
\begin{align}
G_8 & \leq \frac{1}{8} E \sup_{t_0 \leq s \leq u} |e_s^n|^q + E \int_{t_0}^{u} \Big(\int_{Z}    |\gamma_s(x_s,z)-\gamma_s(x_s^n,z)|^2  \nu(dz)\Big)^\frac{q}{2} ds \notag
\\
& + E  \int_{t_0}^{u} \Big(\int_{Z}  |\gamma_s(x_s^n,z)-\gamma_s(x_{\kappa(n,s)}^n,z) |^2    \nu(dz) \Big)^\frac{q}{2} ds\notag
\\
& + E\int_{t_0}^{u}\Big(  \int_{Z}   |\gamma_s(x_{\kappa(n,s)}^n,z)-\gamma_s^n(x_{\kappa(n,s)}^n,z)|^2    \nu(dz) \Big)^\frac{q}{2}ds. \notag
\end{align}
Thus by using Assumption A-9, one has
\begin{align} \label{eq:G8}
G_8 & \leq \frac{1}{8} E \sup_{t_0 \leq s \leq u} |e_s^n|^q +  \int_{t_0}^{u}  E\sup_{t_0 \leq r \leq s}|e_s^n|^q  ds  +   \int_{t_0}^{t_1} E|x_s^n-x_{\kappa(n,s)}^n|^q   ds\notag
\\
& + E\int_{t_0}^{t_1}\Big(  \int_{Z}   |\gamma_s(x_{\kappa(n,s)}^n,z)-\gamma_s^n(x_{\kappa(n,s)}^n,z)|^2    \nu(dz) \Big)^\frac{q}{2}ds
\end{align}
for any $u \in [t_0, t_1]$. Finally, one could write $G_9$ as
\begin{align} \label{eq:G9:old}
G_9 & := E\sup_{t_0 \leq t \leq u}\int_{t_0}^{t} \int_{Z}\{ |e_s^n|^{q-2}|\bar \gamma_s^n(z)|^2+|\bar \gamma_s^n(z)|^q \}N(ds,dz)  \notag
\\
&= E\int_{t_0}^{u} \int_{Z} |e_s^n|^{q-2}|\bar \gamma_s^n(z)|^2 \nu(dz) ds +E\int_{t_0}^{u} \int_{Z} |\bar \gamma_s^n(z)|^q \nu(dz) ds =:H_1+H_2
\end{align}
for any $u \in [t_0,t_1]$. In order to estimate the first term $H_1$ on the right hand side of the inequality \eqref{eq:G9:old} along with Assumption A-9, one recalls the splitting of $\gamma_s^n(z)$ given in \eqref{eq:gamma:splitting}  to get the following estimate,
\begin{align}
H_1  \leq & K E\int_{t_0}^{u} |e_s^n|^{q} ds + K E\int_{t_0}^{u}  |e_s^n|^{q-2}|x_s^n-x_{\kappa(n,s)}^n|^2 ds  \notag
\\
&+ E\int_{t_0}^{u} \int_{Z} |e_s^n|^{q-2}|\gamma_s(x_{\kappa(n,s)}^n,z)-\gamma_s^n(x_{\kappa(n,s)}^n,z) |^2 \nu(dz)ds \notag
\end{align}
for any $u \in [t_0, t_1]$. By the application of Young's inequality, one obtains
\begin{align} \label{eq:H1}
H_1 & \leq  K \int_{t_0}^{u} E\sup_{t_0 \leq r \leq s} |e_r^n|^{q} ds + K\int_{t_0}^{t_1}  E|x_s^n-x_{\kappa(n,s)}^n|^q ds  \notag
\\
&+ K E\int_{t_0}^{t_1} \Big(\int_{Z} |\gamma_s(x_{\kappa(n,s)}^n,z)-\gamma_s^n(x_{\kappa(n,s)}^n,z) |^2 \nu(dz)\Big)^\frac{q}{2} ds
\end{align}
 for any $u \in [t_0, t_1]$. For the second term $H_2$ on the right hand side of the inequality \eqref{eq:G9:old} along with Assumption A-9, one again uses the splitting of $\bar{\gamma}_s^n(z)$ given in equation \eqref{eq:gamma:splitting} to get the following estimate,
\begin{align} \label{eq:H2}
H_2 & \leq  K \int_{t_0}^{u} E\sup_{t_0 \leq r \leq s} |e_r^n|^{q} ds + K\int_{t_0}^{t_1}  E|x_s^n-x_{\kappa(n,s)}^n|^q ds  \notag
\\
&+ K E\int_{t_0}^{t_1} \int_{Z} |\gamma_s(x_{\kappa(n,s)}^n,z)-\gamma_s^n(x_{\kappa(n,s)}^n,z) |^q \nu(dz) ds
\end{align}
for any $u \in [t_0, t_1]$.  Hence on combining the estimates obtained in \eqref{eq:H1} and \eqref{eq:H2} in \eqref{eq:G9:old}, one obtains
\begin{align} \label{eq:G9}
G_9 & \leq  K \int_{t_0}^{u} E\sup_{t_0 \leq r \leq s} |e_r^n|^{q} ds + K\int_{t_0}^{t_1}  E|x_s^n-x_{\kappa(n,s)}^n|^q ds  \notag
\\
& + K E\int_{t_0}^{t_1} \Big(\int_{Z} |\gamma_s(x_{\kappa(n,s)}^n,z)-\gamma_s^n(x_{\kappa(n,s)}^n,z) |^2 \nu(dz)\Big)^\frac{q}{2} ds \notag
\\
&+ K E\int_{t_0}^{t_1} \int_{Z} |\gamma_s(x_{\kappa(n,s)}^n,z)-\gamma_s^n(x_{\kappa(n,s)}^n,z) |^q \nu(dz) ds
\end{align}
 for any $u \in [t_0, t_1]$.

Thus one can substitute   estimates from \eqref{eq:G2}, \eqref{eq:G3}, \eqref{eq:G4}, \eqref{eq:G5+G6+G7}, \eqref{eq:G8} and \eqref{eq:G9} in \eqref{eq:G1+G9} and then apply Gronwall's inequality to obtain
\begin{align*}
E\sup_{t_0 \leq  t \leq t_1}&|e_t^n|^q  \leq  E|e_{t_0}^n|^q + K \int_{t_0}^{t_1}  \left( E|x^n_s-x^n_{\kappa(n,s)}|^{q+\delta}\right)^\frac{q}{q+\delta}ds
\\
& + K\int_{t_0}^{t_1}  E|x_s^n-x_{\kappa(n,s)}^n|^q ds + K E\int_{t_0}^{t_1} |b_s(x^n_{\kappa(n,s)})-b_s^n(x^n_{\kappa(n,s)})|^q ds \notag
\\
&   + K E\int_{t_0}^{t_1} |\sigma_s(x^n_{\kappa(n,s)})-\sigma_s^n(x^n_{\kappa(n,s)})|^q ds
\\
& + K E\int_{t_0}^{t_1} \Big(\int_{Z} |\gamma_s(x_{\kappa(n,s)}^n,z)-\gamma_s^n(x_{\kappa(n,s)}^n,z) |^2 \nu(dz)\Big)^\frac{q}{2} ds \notag
\\
&+ K E\int_{t_0}^{t_1} \int_{Z} |\gamma_s(x_{\kappa(n,s)}^n,z)-\gamma_s^n(x_{\kappa(n,s)}^n,z) |^q \nu(dz) ds.
\end{align*}
By the application of Assumptions  B-7, B-8 and Lemma \ref{lem:tem:new:conv:onestep}, one obtains,
\begin{align*}
E\sup_{t_0 \leq  t \leq t_1}&|e_t^n|^q \leq Kn^{-\frac{q}{q+\delta}}
\end{align*}
which completes the proof.
\end{proof}

\subsection{A Simple Example}
We now introduce a tamed Euler scheme of SDEs driven by L\'evy noise which have coefficients that are not random. For this purpose, we only highlight the modifications needed in the settings of our previous discussion. In SDE \eqref{eq:sdewrc},  $b_t(x)$ and $\sigma_t(x)$ are  $\mathscr{B}([0,T]) \otimes \mathscr{B}(\mathbb R^d)$-measurable functions with values in $\mathbb{R}^d$ and $\mathbb{R}^{d\times m}$ respectively. Also $\gamma_t(x,z)$ is a $\mathscr{B}([0,T]) \otimes \mathscr{B}(\mathbb R^d)\otimes \mathscr{Z}$-measurable  function with values in $\mathbb{R}^d$. Moreover, one modifies Assumptions A-5 and A-6 by assigning $M=M'=1$. Further, for every $n \in \mathbb{N}$, the scheme \eqref{eq:em:sdewrc} is given by defining
\begin{align} \label{eq:sde:coeff}
b^n_t(x)=\frac{b_t(x)}{1+n^{-\theta}|b_t(x)|}, \sigma_t^n(x)=\sigma_t(x) \mbox{ and } \gamma_t^n(x,z)=\gamma_t(x,z)
\end{align}
with $\theta \in (0,\frac{1}{2}]$ for any $t \in [t_0,t_1]$, $x \in \mathbb{R}^d$ and $z \in Z$. Then, it is easy to observe that Assumptions B-2 to B-4  hold since $M_n=M_n'=1$ and $\theta \in (0,\frac{1}{2}]$. Hence Lemmas [\ref{lem:mb:rc}, \ref{lem:tem:conv:onestep}, \ref{lem:mb:tes}, \ref{lem:tem:new:conv:onestep}] follow immediately. Finally, $\mathscr{F}_{t_0}$ measurable random variable $C_R$ in Assumptions A-7 and A-8 is a constant for every $R$.  In this new settings, one obtains the following corollaries for SDE \eqref{eq:sdewrc} and scheme \eqref{eq:em:sdewrc} with coefficients given by \eqref{eq:sde:coeff}.
\begin{corollary}
Let Assumptions A-3 to  A-8 be satisfied by the coefficients of SDE given immediately above. Also assume that B-1 and  B-6 hold. Then,  the numerical scheme \eqref{eq:em:sdewrc} with coefficients given by \eqref{eq:sde:coeff} converges to the solution of SDE \eqref{eq:sdewrc} in $\mathcal{L}^q$ sense i.e.
\begin{equation}
\lim_{n \rightarrow \infty} E \sup_{t_0 \leq t \leq t_1}|x_t-x_t^n|^q =0 \notag
\end{equation}
for all  $q < p$.
\end{corollary}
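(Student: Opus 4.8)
The plan is to deduce this corollary directly from Theorem \ref{thm:conv:rc} by checking that the specific tamed coefficients in \eqref{eq:sde:coeff} satisfy all the hypotheses of that theorem. Since Assumptions A-3 through A-8 and B-1, B-6 are assumed outright (and A-5, A-6 are read with $M=M'=1$ in this deterministic setting), the only work is to verify B-2, B-3 and B-4 from the structure of the scheme, and to check the convergence condition B-5; then Theorem \ref{thm:conv:rc} applies verbatim and gives the assertion for every $q<p$.

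For B-4 I would simply observe that $|b^n_t(x)|=|b_t(x)|/(1+n^{-\theta}|b_t(x)|)\le n^{\theta}$ for all $t,x$, with $\theta\in(0,\tfrac12]$. For B-2, the diffusion and jump terms are unchanged ($\sigma^n=\sigma$, $\gamma^n=\gamma$), so they are controlled by Assumption A-5 with $M_n\equiv 1$; for the drift term one writes $x b^n_t(x)=x b_t(x)/(1+n^{-\theta}|b_t(x)|)$ and splits on the sign of $x b_t(x)$: if $x b_t(x)\ge 0$ the denominator $\ge 1$ only decreases it, so $x b^n_t(x)\le x b_t(x)\le L(1+|x|^2)$, while if $x b_t(x)<0$ then $x b^n_t(x)<0\le L(1+|x|^2)$; hence B-2 holds with $M_n\equiv 1$. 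Likewise B-3 follows immediately from $\gamma^n=\gamma$ and Assumption A-6 with $M_n'\equiv 1$. These are exactly the observations already announced in the text preceding the corollary, so Lemmas \ref{lem:mb:rc}, \ref{lem:mb:tes} and \ref{lem:tem:new:conv:onestep}, which feed into Theorem \ref{thm:conv:rc}, are available.

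The one step that needs a short computation is B-5. Since $\sigma^n=\sigma$ and $\gamma^n=\gamma$, the diffusion and jump contributions in B-5 vanish identically, so it suffices to estimate $\sup_{|x|\le R}|b^n_t(x)-b_t(x)|^2$ on the event $B(R)$. From $b^n_t(x)-b_t(x)=-\,b_t(x)\,n^{-\theta}|b_t(x)|/(1+n^{-\theta}|b_t(x)|)$ one gets $|b^n_t(x)-b_t(x)|\le n^{-\theta}|b_t(x)|^2$, and on $B(R)$ with $|x|\le R$ Assumption A-8 gives $|b_t(x)|\le C_R\le f(R)$; hence $\sup_{|x|\le R}|b^n_t(x)-b_t(x)|^2\le n^{-2\theta}f(R)^4$, so the left-hand side of B-5 is bounded by $(t_1-t_0)\,n^{-2\theta}f(R)^4\to 0$ as $n\to\infty$, uniformly in $t$ and $\omega$. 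The only minor point to mind is that the family $C_R$ is now a genuine constant for each $R$, so one should first fix a non-decreasing $f$ with $f(R)\ge C_R$ (for instance $f(R):=\sup_{R'\le R}C_{R'}$), making $B(R)=\Omega$ and \eqref{eq:cr} trivially valid. With B-1 through B-6 and A-3 through A-8 all in force, Theorem \ref{thm:conv:rc} yields $\lim_{n\to\infty}E\sup_{t_0\le t\le t_1}|x_t-x^n_t|^q=0$ for all $q<p$. I do not expect a real obstacle here; the corollary is essentially a specialization, and the single nontrivial ingredient is the elementary bound $|b^n_t-b_t|\le n^{-\theta}|b_t|^2$ combined with the local boundedness Assumption A-8.
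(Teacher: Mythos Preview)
Your proof is correct and follows essentially the same route as the paper: verify that the tamed coefficients \eqref{eq:sde:coeff} satisfy B-2 through B-5 (with $M_n=M_n'=1$), then invoke Theorem \ref{thm:conv:rc}. The paper's proof is terser---it takes $f(R)=C_R$ directly and records only the key estimate $|b^n_t-b_t|^2\le n^{-2\theta}|b_t|^4\le n^{-2\theta}f(R)^4$ on $B(R)$ via A-8, treating B-2 through B-4 as already noted in the preceding paragraph---but your sign-splitting argument for $xb^n_t(x)$ and your care in choosing a non-decreasing $f$ are harmless elaborations of the same idea.
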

\begin{proof}
Assumption A-7 and A-8 are satisfied on taking $f(R)=C_R$ in equation \eqref{eq:cr}. For Assumption B-5, one observes due to \eqref{eq:sde:coeff} and Assumption A-8,
\begin{align}
 E\int_{t_0}^{t_1} I_{B(R)} \sup_{|x| \leq R}|b_t^n(x)-b_t(x)|^2dt& \leq n^{-2\theta} E\int_{t_0}^{t_1} I_{B(R)} \sup_{|x| \leq R}|b_t(x)|^4dt \leq K f(R)^4 n^{-2\theta} \to 0 \notag
\end{align}
as $ n \to \infty $ for every $R$. Also for diffusion and jump coefficients, Assumption B-5 holds trivially.  Thus, Theorem \ref{thm:conv:rc} completes the proof.
\end{proof}

For rate of convergence of scheme \eqref{eq:em:sdewrc}, one takes $\theta=\frac{1}{2}$ in equation \eqref{eq:sde:coeff}.
  \begin{corollary}
Let Assumptions A-3 to  A-6, A-8 and  A-9 be satisfied by the coefficients of SDE given immediately above. Also suppose that Assumptions B-1  and  B-8 hold. Then, the numerical scheme \eqref{eq:em:sdewrc} with coefficients given by \eqref{eq:sde:coeff} achieves the classical rate (of Euler scheme) in $\mathcal{L}^q$ sense i.e.
\begin{align}
E\sup_{t_0 \leq t \leq t_1}|x_{t}-x_{t}^n|^q \leq K n^{-\frac{q}{q+\delta}}
\end{align}
where constant $K>0$ does not depend on $n$.
\end{corollary}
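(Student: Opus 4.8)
The plan is to deduce the corollary directly from Theorem \ref{thm:rate:rc:em}, so the entire task reduces to checking that the hypotheses of that theorem are satisfied in the present non-random setting with the taming \eqref{eq:sde:coeff} and $\theta=\tfrac12$. Assumptions A-3 to A-6, A-8, A-9 and B-1, B-8 are granted in the statement, and, as already observed in the discussion preceding the corollary, B-2, B-3 and B-4 hold with $M_n=M_n'=1$: here $C_R$ is a constant so A-8 yields a genuine bound on $b_t$, and $|b^n_t(x)|=|b_t(x)|/(1+n^{-1/2}|b_t(x)|)\le n^{1/2}=n^{\theta}$. Hence the only assumption that requires work is B-7.

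For B-7, first note that $\sigma^n_t=\sigma_t$ and $\gamma^n_t=\gamma_t$ by \eqref{eq:sde:coeff}, so the diffusion and jump parts of B-7 are identically zero for $\zeta=2$ and $\zeta=q$, and only the drift term needs an estimate. From \eqref{eq:sde:coeff},
\[
|b^n_t(x)-b_t(x)|=\frac{n^{-1/2}|b_t(x)|^2}{1+n^{-1/2}|b_t(x)|}\leq n^{-1/2}|b_t(x)|^2,
\]
and Remark \ref{rem:poly:b:rc} gives $|b_t(x)|^2\leq K(1+|x|^{\chi+2})$. Substituting $x=x^n_{\kappa(n,t)}$, raising to the power $q$, integrating over $[t_0,t_1]$ and taking expectations, one obtains
\[
E\int_{t_0}^{t_1}|b^n_t(x^n_{\kappa(n,t)})-b_t(x^n_{\kappa(n,t)})|^q\,dt\leq K n^{-q/2}\,E\sup_{t_0\leq t\leq t_1}\big(1+|x^n_t|^{(\chi+2)q}\big).
\]
Since $(\chi+2)q\leq p$ by the constraint built into Assumption A-9, Lemma \ref{lem:mb:tes} (together with Jensen's inequality to pass from the $p$-th to the $(\chi+2)q$-th moment) bounds the right-hand side by $K n^{-q/2}$ with $K$ independent of $n$; and because $q\geq 2$ and $\delta\in(0,1)$ force $q+\delta\geq 2$, hence $n^{-q/2}\leq n^{-q/(q+\delta)}$ for $n\geq 1$, this is at most $K n^{-q/(q+\delta)}$. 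Thus B-7 holds.

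With all the hypotheses of Theorem \ref{thm:rate:rc:em} in force, the stated estimate $E\sup_{t_0\leq t\leq t_1}|x_t-x^n_t|^q\leq K n^{-q/(q+\delta)}$ follows immediately. There is essentially no obstacle here: the only point requiring a little care is the exponent bookkeeping, since squaring the polynomial growth bound for $b$ and then taking $q$-th powers produces the exponent $(\chi+2)q$, but this is exactly what the requirement $\max\{(\chi+2)q,\tfrac{q\chi}{2}\tfrac{q+\delta}{\delta}\}\leq p$ in A-9 was designed to absorb, so Lemma \ref{lem:mb:tes} applies without difficulty.
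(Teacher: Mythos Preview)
Your proof is correct and follows essentially the same approach as the paper: verify that Assumption B-7 holds for the tamed drift via $|b^n_t(x)-b_t(x)|\le n^{-1/2}|b_t(x)|^2$ and Remark \ref{rem:poly:b:rc}, note that B-7 is trivial for the diffusion and jump coefficients, and then invoke Theorem \ref{thm:rate:rc:em}. In fact your exponent bookkeeping is more careful than the paper's, which writes $n^{-2\theta}$ where $n^{-q\theta}$ is meant; your explicit check that $(\chi+2)q\le p$ and that $n^{-q/2}\le n^{-q/(q+\delta)}$ makes the argument cleaner.
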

\begin{proof}
By  using equation \eqref{eq:sde:coeff} and Remark \ref{rem:poly:b:rc}, one obtains
\begin{align*}
E\int_{t_0}^{t_1} |b_t^n(x^n_{\kappa(n,t)})-b_t(x^n_{\kappa(n,t)})|^q dt & \leq n^{-2\theta} E\int_{t_0}^{t_1} |b_t(x^n_{\kappa(n,t)})|^{2q}  dt  \leq K n^{-1} (1+E\sup_{t_0 \leq t \leq t_1}|x^n_t|^{q(\chi+2)})
\end{align*}
since $\theta=\frac{1}{2}$. Hence Assumption  B-7 for drift coefficients follows due to Lemma \ref{lem:mb:tes}. For diffusion and jump coefficients, Assumption B-7 holds trivially.  The proof is completed by Theorem \ref{thm:rate:rc:em}.
\end{proof}
\section{Application to Delay Equations}
Let us assume that $\beta_t(y_1, \ldots, y_k, x)$ and $\alpha_t(y_1, \ldots, y_k, x)$ are $\mathscr B([0,T]) \otimes \mathscr B(\mathbb R^{d\times k}) \otimes \mathscr B(\mathbb{R}^d)$-measurable functions and take values in $\mathbb R^d$ and $\mathbb R^{d \times m}$ respectively. Also  let $\lambda_t( y_1, \ldots, y_k, x, z)$ be $\mathscr B([0,T]) \otimes \mathscr B(\mathbb R^{d\times k}) \otimes \mathscr B(\mathbb R^d) \otimes \mathscr{Z}$-measurable function and takes values in $\mathbb R^{d}$. For fixed $H>0$, we consider a $d$-dimensional stochastic delay differential equations (SDDEs)  on $(\Omega, \{\mathscr F_t\}_{t \geq 0}, \mathscr F, P)$ defined  by,
\begin{align} \label{eq:sdde}
dx_t &= \beta_t(y_t, x_t)dt +\alpha_t(y_t, x_t)dw_t + \int_{Z} \lambda_t(y_{t},x_{t},z)\tilde N(dt,dz), \,\,\, t \in [0,T], \notag
\\
x_t& = \xi_t, \,\,\, t \in [-H, 0 ],
\end{align}
where $\xi:[-H,0]\times \Omega\rightarrow\mathbb R^d$ and  $y_t:=(x_{\delta_1(t)}, \ldots, x_{\delta_k(t)})$.  The delay parameters $\delta_1(t), \ldots, \delta_k(t)$ are increasing functions of $t$ and satisfy $-H \leq \delta_j(t) \leq [t/h]h$ for some $h>0$ and $j=1,\ldots, k$.
\begin{remark} \label{rem:T:sdde}
In the following, we assume, without loss of generality, that  $T$ is a  multiple of $h$. If not, then SDDE \eqref{eq:sdde} can be defined for $T' > T$ so that $T'=N' h$, where $N'$ is a positive integer.  The results proved in this article are then recovered for the original SDDE \eqref{eq:sdde} by choosing parameters as $\beta I_{t\leq T}$, $\alpha I_{t \leq T}$ and $\lambda I_{t \leq T}$.
\end{remark}
\begin{remark}
We remark that two popular cases of delay viz. $\delta_i(t)=t-h$ and $\delta_i(t)=[t/h]h$ can be addressed by our findings which have been widely used in literature, for example, \cite{Federico2011,Federico2010, sabanis2011, Oksendal2011} and  references therein.
\end{remark}

\subsection{Existence and Uniqueness}
To prove the existence and uniqueness of the solution of SDDE \eqref{eq:sdde}, we make the following assumptions.
\begin{assumptionC} \label{as:growth:sdde:eu}
For every $R>0$, there exists an $M(R) \in \mathbb{L}^1$ such that
\begin{align}
 x \beta_t(y,x) + |\alpha_t(y,x)|^2 +\int_Z |\lambda_t(y,x,z)|^2 \nu(dz) \leq M_t(R)(1+|x|^2) \notag
\end{align}
for any $t \in [0,T]$  whenever $|y| \leq R$ and $x \in \mathbb{R}^d$.
\end{assumptionC}
\begin{assumptionC} \label{as:local:lipschitz:sdde:eu}
For every $R>0$, there exists an $M(R) \in \mathbb{L}^1$ such that
\begin{align}
(x-\bar{x}) (\beta_t(y, x)-\beta_t(y, \bar{x})) &+ |\alpha_t(y,x)-\alpha_t(y,\bar{x})|^2  + \int_Z |\lambda_t(y, x,z)-\lambda_t(y, \bar{x},z)|^2 \nu(dz) \leq  M_t(R)|x-\bar{x}|^2 \notag
\end{align}
for any $t \in [0,T]$ whenever $|x|,|\bar{x}|, |y| \leq  R$.
\end{assumptionC}
\begin{assumptionC} \label{as:continuity:sdde}
The function $\beta_t(y,x)$ is continuous in $x$ for any $t$ and $y$.
\end{assumptionC}
\begin{theorem} \label{thm:sdde:eu}
Let Assumptions C-1 to C-3 be satisfied. Then there exists a unique solution to SDDE \eqref{eq:sdde}.
\end{theorem}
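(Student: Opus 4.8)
The plan is to build the solution recursively on the successive intervals $[jh,(j+1)h]$, $j=0,\dots,N-1$ (recall $T=Nh$ by Remark \ref{rem:T:sdde}), at each step reducing SDDE \eqref{eq:sdde} to the random-coefficient SDE \eqref{eq:sdewrc} and invoking Theorem \ref{thm:eu:rc}; this is the approach of \cite{gs2012}. Suppose $x$ has already been constructed, and shown to be the unique solution, on $[-H,jh]$, with a.s. càdlàg paths (the base case $j=0$ being the datum $\xi$, which we take to be $\mathscr F_0$-measurable with a.s. càdlàg paths, so that $\sup_{-H\le s\le 0}|\xi_s|<\infty$ a.s.). The crucial structural observation is that, since $\delta_i(t)\le [t/h]h$, for $t\in[jh,(j+1)h)$ one has $\delta_i(t)\le jh$; hence, reading the delay vector $y_t=(x_{\delta_1(t)},\dots,x_{\delta_k(t)})$ with the left-limit convention of Section 2, the process $(y_t)_{t\in[jh,(j+1)h]}$ is completely determined by $x|_{[-H,jh]}$. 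Consequently $y$ is, on that interval, a predictable process which is $\mathscr F_{jh}$-measurable for each fixed $t$, and
$$\rho_j:=\sup_{jh\le t\le (j+1)h}|y_t|\le \sup_{-H\le s\le jh}|x_s|$$
is an $\mathscr F_{jh}$-measurable, a.s.\ finite random variable.

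Next I would freeze the delay and set, for $t\in[jh,(j+1)h]$, $b_t(x):=\beta_t(y_t,x)$, $\sigma_t(x):=\alpha_t(y_t,x)$ and $\gamma_t(x,z):=\lambda_t(y_t,x,z)$. By the joint measurability of $\beta,\alpha,\lambda$ and the predictability of $y$ these are $\mathscr P\otimes\mathscr B(\mathbb R^d)$- (resp.\ $\mathscr P\otimes\mathscr B(\mathbb R^d)\otimes\mathscr Z$-) measurable, so \eqref{eq:sdewrc} makes sense with them as coefficients and $x_{jh}$ as initial value. One then checks Assumptions A-1, A-2, A-3 on $[t_0,t_1]=[jh,(j+1)h]$. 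A-3 is immediate from C-3. For A-1, applying C-1 with $R=\rho_j$ gives $xb_t(x)+|\sigma_t(x)|^2+\int_Z|\gamma_t(x,z)|^2\nu(dz)\le M_t(\rho_j)(1+|x|^2)$, so one takes $\mathcal M_t:=M_t(\rho_j)$; assuming (without loss of generality) that $R\mapsto M_t(R)$ is non-decreasing and left-continuous, $\mathcal M$ is predictable — approximate $\rho_j$ from below by simple $\mathscr F_{jh}$-measurable variables and use that deterministic Borel functions of time lie in $\mathscr P$ — and $\int_{jh}^{(j+1)h}\mathcal M_t\,dt<\infty$ a.s., because $M_\cdot(r)\in\mathbb L^1$ for every $r$ and $\{\rho_j\le r\}\uparrow\Omega$. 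Likewise C-2 yields A-2 with $\mathcal M_t(R):=M_t(R\vee\rho_j)\in\mathcal A$. Theorem \ref{thm:eu:rc} then provides a unique solution $(x_t)_{t\in[jh,(j+1)h]}$ of \eqref{eq:sdewrc}; concatenating it with the previously built piece extends $x$ uniquely to $[-H,(j+1)h]$, and $x$ inherits a.s.\ càdlàg paths, which closes the induction.

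After $N$ steps this produces $x$ on $[-H,T]$, and on each subinterval $[jh,(j+1)h]$ the defining equation holds since there $b_t(x_t)=\beta_t(y_t,x_t)$, $\sigma_t(x_t)=\alpha_t(y_t,x_t)$, $\gamma_t(x_t,z)=\lambda_t(y_t,x_t,z)$; hence $x$ solves \eqref{eq:sdde} on $[0,T]$. Uniqueness follows by the same induction: if $x,x'$ both solve \eqref{eq:sdde} and agree on $[-H,jh]$, then $y$ and $y'$ coincide on $[jh,(j+1)h]$, so $x$ and $x'$ solve one and the same random-coefficient SDE on that interval and agree there by the uniqueness part of Theorem \ref{thm:eu:rc}. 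I expect the only genuine difficulty to be the verification that the frozen bounds $\mathcal M$ and $\mathcal M(R)$ belong to the class $\mathcal A$ — i.e.\ are predictable with a.s.\ integrable paths — which is exactly where the inequality $\delta_i(t)\le[t/h]h$ is indispensable, since it makes $\sup_t|y_t|$ an a.s.\ finite, $\mathscr F_{jh}$-measurable quantity on the current step; the remaining arguments are routine applications of Theorem \ref{thm:eu:rc} and a pasting argument.
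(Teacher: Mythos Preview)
Your proposal is correct and follows essentially the same approach as the paper, which gives only a one-line proof citing \cite{gs2012}: freeze the delay argument $y_t$ as a random coefficient via \eqref{mkjdnswe} and apply Theorem~\ref{thm:eu:rc} inductively on the intervals $[jh,(j+1)h]$. You have simply spelled out the details (verification of A-1--A-3 from C-1--C-3, and the pasting argument) that the paper delegates to the reference.
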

\begin{proof}
We adopt the approach of  \cite{gs2012} and consider SDDE \eqref{eq:sdde} as a special case of SDE \eqref{eq:sdewrc} by assigning the following values to the coefficients,
\begin{align}
b_t(x)=\beta_t( y_t, x),  \sigma_t(x)=\alpha_t(y_t, x), \gamma_t(x,z)=\lambda_t(y_t, x, z) \label{mkjdnswe}
\end{align}
almost surely for any $t \in [0,T]$. Then the proof is a straightforward generalization of Theorem 2.1 of  \cite{gs2012} and follows due to Theorem \ref{thm:eu:rc}.
\end{proof}
\subsection{Tamed Euler Scheme}
For every $n \in \mathbb{N}$, define the following tamed Euler scheme
\begin{align}
dx_t^n &=  \beta_t^n(y_t^n, x_{\kappa(n,t)}^n) dt + \alpha_t(y_t^n, x_{\kappa(n,t)}^n) dw_t +\int_{Z} \lambda_t( y_t^n, x_{\kappa(n,t)}^n, z)\tilde N(dt,dz),  \,\,\,t \in [0, T], \notag
\\
x_t^n &= \xi_t, \,\,\, t \in [-H, 0], \label{csddeemwjt}
\end{align}
where $y_t^n:=(x_{\delta_1(t)}^n, \ldots, x_{\delta_k(t)}^n)$ and $\kappa$ is defined by \eqref{eq:kappa:sdewrc} with $t_0=0$. Furthermore, for every $n\in \mathbb{N}$,  the drift coefficient is given by
\begin{align}
\beta_t^n(y, x):=\frac{\beta_t(y, x)}{1+n^{-\theta}|\beta_t(y, x)|} \notag
\end{align}
which satisfies
\begin{equation}
|\beta_t^n(y, x)| \leq \min(n^\theta, |\beta_t(y, x)|) \label{ggnhm}
\end{equation}
for any $t \in [0,T]$, $x \in \mathbb{R}^d$ and $y \in \mathbb{R}^{d \times k}$.
\begin{assumptionC} \label{as:initial sdde}
For a fixed $p \geq 2$,  $E\sup_{-H \leq t \leq 0}|\xi_t|^p< \infty$.
\end{assumptionC}
\begin{assumptionC} \label{as: growth sdde}
There exist constants $G >0$ and $\chi \geq 2$ such that
\begin{align}
x \beta_t(y, x)\vee |\alpha_t(y, x)|^2 \vee \int_{Z}  |\lambda_t(y,x,z)|^2\nu(dz)\leq G(1+|y|^\chi+|x|^2) \notag
\end{align}
for any $t \in [0,T]$, $x  \in \mathbb R^d$ and $y \in \mathbb R^{d \times k}$.
\end{assumptionC}
\begin{assumptionC} \label{as: p growth sdde}
There exist constants $G >0$ and $\chi \geq 2$ such that
\begin{align}
 &\int_{Z}  |\lambda_t(y,x,z)|^p\nu(dz) \leq G(1+|y|^{\chi\frac{p}{2}}+|x|^p) \notag
\end{align}
 for any $t \in [0,T]$, $x  \in \mathbb R^d$ and $y \in \mathbb R^{d \times k}$.
\end{assumptionC}
\begin{assumptionC} \label{as:local lipschitz sdde}
For every $R>0$, there exists a constant $K_R >0 $ such that
\begin{align}
(x-\bar x)(\beta_t(y, x)-\beta_t(y, \bar x) ) \vee |\alpha_t(y, x)&-\alpha_t(y, \bar x)|^2  \vee \int_{Z} |\lambda_t(y,x,z)-\lambda_t(y,\bar x,z)|^2 \nu(dz)
\leq K_R |x-\bar x|^2 \notag
\end{align}
for any $t \in [0,T]$ whenever $|x|, |y|, |\bar x| < R$.
\end{assumptionC}
\begin{assumptionC} \label{as:local bound sdde}
For every $R>0$, there exists a constant $K_R>0$ such that
\begin{align}
\sup_{|x| \leq R} \sup_{|y|\leq R} |\beta_t(y, x)|^2 \leq K_R \notag
\end{align}
for any $t \in [0, T]$.
\end{assumptionC}
\begin{assumptionC} \label{as:coeff con sdde}
For every $R>0$ and $t \in [0,T]$,
\begin{align}
\sup_{|x| \leq R}\Big\{|\beta_t(y, x)-&\beta_t(y', x)|^2+|\alpha_t(y, x)-\alpha_t(y', x)|^2  + \int_{Z} |\lambda_t(y, x, z)-\lambda_t(y', x, z)|^2 \nu(dz)\Big\}  \rightarrow 0 \notag
\end{align}
when $y' \rightarrow y$.
\end{assumptionC}
Let us also define,
\begin{equation}
p_i=\Big(\frac{2}{\chi}\Big)^i p \label{eq:p:i}
\end{equation}
for $i=1, \ldots, N'$, where $\chi$ and $p$ satisfy $p/2 \geq (\chi/2)^{N'}$. Also
\begin{align}
p^*=\min_{i}p_i= \Big(\frac{2}{\chi}\Big)^{N'} p. \label{eq:p*}
\end{align}
The following corollary is a consequence of Theorem \ref{thm:conv:rc}.
\begin{corollary}
\label{cmainthmwjt}
Let Assumptions  C-3 to C-9 hold, then
\begin{align*}
\lim_{n \rightarrow \infty}E \sup_{0 \leq t \leq T}|x_t-x_t^n|^q =0
\end{align*}
for any $q  <p^*$.
\end{corollary}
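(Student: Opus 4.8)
The plan is to adapt the device of \cite{gs2012}: on each subinterval $[(i-1)h,ih]$, $i=1,\dots,N'$, view \eqref{eq:sdde} and \eqref{csddeemwjt} as an instance of the random-coefficient equation \eqref{eq:sdewrc} and scheme \eqref{eq:em:sdewrc} with $t_0=(i-1)h$, $t_1=ih$, via the substitution \eqref{mkjdnswe}, that is $b_t(x)=\beta_t(y_t,x)$, $\sigma_t(x)=\alpha_t(y_t,x)$, $\gamma_t(x,z)=\lambda_t(y_t,x,z)$ and $b^n_t(x)=\beta^n_t(y^n_t,x)$, $\sigma^n_t(x)=\alpha_t(y^n_t,x)$, $\gamma^n_t(x,z)=\lambda_t(y^n_t,x,z)$ (and, as we may assume, the discretisation grid contains the points $ih$). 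The key structural fact is that for $t\in[(i-1)h,ih)$ one has $\delta_j(t)\le[t/h]h=(i-1)h$, so $y_t$ and $y^n_t$ are predictable processes, already determined by $x$ (resp. $x^n$) on $[-H,(i-1)h]$, and in particular bounded pathwise by the $\mathscr F_{(i-1)h}$-measurable random variables $\bar R_{i-1}:=\sup_{-H\le s\le(i-1)h}|x_s|$ and $\bar R^n_{i-1}:=\sup_{-H\le s\le(i-1)h}|x^n_s|$. Since, with $|y|$ restricted to a ball, Assumption C-5 reduces to Assumption C-1 and Assumption C-7 to Assumption C-2, a unique solution of \eqref{eq:sdde} exists by Theorem \ref{thm:sdde:eu}.

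I would then induct on $i$, the $i$-th step asserting: (a) $E\sup_{-H\le t\le ih}|x_t|^{p_i}<\infty$ and $\sup_n E\sup_{-H\le t\le ih}|x^n_t|^{p_i}<\infty$; and (b) $E\sup_{(i-1)h\le t\le ih}|x_t-x^n_t|^q\to0$ for every $q<p_i$, with $p_i$ as in \eqref{eq:p:i} and $p_0:=p$. For (a): Assumption C-5 together with the definition of $\beta^n_t$ gives, on $[(i-1)h,ih]$, the growth bounds of Assumptions A-5 and B-2 at level $p_i$ with growth random variables $M\asymp1+\bar R_{i-1}^{\chi}$ and $M_n\asymp1+(\bar R^n_{i-1})^{\chi}$; interpolating the level-$2$ bound of Assumption C-5 with the level-$p$ bound of Assumption C-6 gives the $\mathcal L^{p_i}(\nu)$-growth of Assumptions A-6 and B-3 with $M'\asymp1+\bar R_{i-1}^{\chi p_i/2}$ and $M_n'\asymp1+(\bar R^n_{i-1})^{\chi p_i/2}$. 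Because \eqref{eq:p:i} yields exactly $\chi p_i/2=p_{i-1}$, the hypotheses $EM^{p_i/2},EM'<\infty$ and their uniform-in-$n$ analogues are precisely statement (a) at step $i-1$ (the base case $i=1$ being Assumption C-4), so Lemmas \ref{lem:mb:rc}, \ref{lem:mb:tes} and \ref{lem:tem:new:conv:onestep} deliver the $\mathcal L^{p_i}$-bounds on $[(i-1)h,ih]$, hence (with step $i-1$) on $[-H,ih]$.

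For (b) I would re-run the proof of Theorem \ref{thm:conv:rc} on $[(i-1)h,ih]$. Assumption A-3 is Assumption C-3; Assumptions A-4 and B-1 follow from (a) at step $i-1$ since $p_i\le p_{i-1}$ (trivially for $i=1$, as $x_{(i-1)h}=x^n_{(i-1)h}=\xi_0$); Assumption B-4 is \eqref{ggnhm}; the local one-sided Lipschitz and local boundedness Assumptions A-7, A-8 follow from Assumptions C-7, C-8 upon taking the (permissibly random, $\mathscr F_{(i-1)h}$-measurable) family $C_R:=K_{R\vee\bar R_{i-1}}$, which satisfies \eqref{eq:cr} since $\bar R_{i-1}<\infty$ a.s.; and Assumption B-6 at the endpoint $(i-1)h$ is statement (b) at step $i-1$, which also furnishes $y^n_t\to y_t$ in probability for $t\in[(i-1)h,ih)$. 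The only use of Assumption B-5 in the non-delay proof is in the coefficient-comparison terms; there I would measure the stopping times $\pi_R=\inf\{t\ge0:|x_t|\ge R\}$, $\pi_{nR}=\inf\{t\ge0:|x^n_t|\ge R\}$ from the origin, so that on $\{\pi_R\wedge\pi_{nR}>s\}$ the frozen delay variables $y_s,y^n_s$ also lie in the ball of radius $R$, and then bound those terms directly through the taming estimate \eqref{ggnhm}, the local bound of Assumption C-8, and the continuity Assumption C-9 (together with $y^n_t\to y_t$ in probability). This gives $E\sup_{(i-1)h\le t\le ih}|e^n_{t\wedge\tau_{nR}}|^2I_{B(R)}\to0$ for each $R$, and then (b) follows, exactly as $D_1\to0$ and $D_2\to0$ in the proof of Theorem \ref{thm:conv:rc}, using the uniform integrability provided by (a).

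Finally, since $\chi\ge2$ makes $(p_i)_i$ non-increasing with minimum $p^*$ (see \eqref{eq:p*}), for any fixed $q<p^*$ statement (b) holds on every subinterval, whence
\[
E\sup_{0\le t\le T}|x_t-x^n_t|^q\le\sum_{i=1}^{N'}E\sup_{(i-1)h\le t\le ih}|x_t-x^n_t|^q\longrightarrow0.
\]
I expect the main obstacle to be the interplay, within step (b), between the per-subinterval loss of integrability by the factor $2/\chi$ — forced by the superlinear dependence of the coefficients on the delay argument in Assumptions C-5, C-6 — and the fact that C-5--C-9 control the coefficients only on compact sets of the frozen processes $y_t,y^n_t$; organising the localization of Theorem \ref{thm:conv:rc} (stopping times from the origin, the random $C_R$, the event $B(R)$) so that the available $\mathcal L^{p_i}$ bounds are just sufficient to kill every localized error term is the technical crux.
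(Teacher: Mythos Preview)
Your proposal is correct and follows the paper's overall strategy: induction over the subintervals $[(i-1)h,ih]$, viewing the SDDE and its scheme as instances of \eqref{eq:sdewrc}--\eqref{eq:em:sdewrc} via \eqref{mkjdnswe}--\eqref{ccoffwjt}, and propagating moment bounds at the decreasing levels $p_i$ forced by the $|y|^\chi$ growth in C-5--C-6. Your choice $C_R=K_{R\vee\bar R_{i-1}}$ is equivalent to the paper's layer-cake construction $C_R=K_RI_{\Omega_R}+\sum_{j\ge R}K_{j+1}I_{\Omega_{j+1}\setminus\Omega_j}$ with $\Omega_j=\{\sup|y_t|\le j\}$, and your observation that $\chi p_i/2=p_{i-1}$ is exactly what makes the induction close.

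The one genuine difference is how the coefficient-comparison term (Assumption B-5) is handled. The paper verifies B-5 on each subinterval and then invokes Theorem \ref{thm:conv:rc} as a black box: on $B(R)$ the true delay $y_t$ is trapped in the $R$-ball, so C-8 gives a deterministic bound on $\beta_t(y_t,\cdot)$; the inductive convergence $y^n_t\to y_t$ in probability combined with C-9 gives pointwise convergence, and uniform integrability (from C-8 and the inductive $\mathcal L^{p_r}$ bounds on $y^n$) upgrades this to the $L^2$-convergence B-5 demands. You instead open up the proof of Theorem \ref{thm:conv:rc} and shift the stopping times $\pi_R,\pi_{nR}$ to start at the origin (they should really start at $-H$, to also trap $\xi$), so that on the good event the \emph{scheme's} delay $y^n_t$ is itself confined to the ball; then C-8 gives a direct deterministic bound on both $\beta_t(y_t,\cdot)$ and $\beta_t(y^n_t,\cdot)$, and C-9 plus $y^n_t\to y_t$ finishes the job without a separate uniform-integrability step. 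The paper's route is more modular---Theorem \ref{thm:conv:rc} is reused verbatim---while yours is arguably cleaner in that it localises the scheme's delay variable symmetrically with the true one, at the cost of re-running part of the localisation argument.
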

\begin{proof}
First as before, one observes that  SDDE \eqref{eq:sdde} can be regarded as a special case of SDE \eqref{eq:sdewrc} with coefficients given by \eqref{mkjdnswe}. Moreover,  tamed Euler scheme \eqref{csddeemwjt} is a special of  \eqref{eq:em:sdewrc} with coefficients given by
\begin{align} \label{ccoffwjt}
b^n_t(x)=\frac{\beta_t(y^n_t, x)}{1+n^{-\theta}|\beta_t(y^n_t, x)|},   \sigma_t^n(x)=\alpha_t(y^n_t, x), \gamma_t^n(x,z)=\lambda_t(y_t^n, x,z)
\end{align}
almost surely for any $t \in [0, T]$ and $x\in \mathbb R^d$. We shall use inductive arguments to show
\begin{eqnarray}
\lim_{n \rightarrow \infty}E\sup_{(i-1)h \leq t \leq i h} |x_t-x_t^n|^q  =0 \label{cnanut}
\end{eqnarray}
for any $q  < p_{i}$ and for every $i \in \{1, \ldots, N'\}$.
\newline
\textbf{Case: $\mathbf{t \in [0, h]}$.} For  $t \in [0, h]$, one could consider SDDE \eqref{eq:sdde} and their tamed Euler  scheme \eqref{csddeemwjt} as  SDE \eqref{eq:sdewrc} and scheme \eqref{eq:em:sdewrc} respectively with $t_0=0$, $t_1=h$, $x_0=x^n_0=\xi_0$ and with  coefficients given in \eqref{mkjdnswe} and \eqref{ccoffwjt}. Further, one observes that  Assumptions  A-3 to  A-8 and B-1 to B-6 hold due to Assumptions C-3  to C-9. In particular, Assumption A-3 holds due to Assumption C-3 while Assumptions A-4 and B-1 due to Assumption C-4. Further Assumptions A-5, A-6, B-2 and B-3 hold due to  Assumptions C-5 and  C-6 with $L=G$, $M=M_n=1+\Psi^{\chi} \in \mathcal{L}^{\frac{p_1}{2}}$ and $M'=M_n'=1+\Psi^{\chi\frac{p_1}{2}} \in \mathcal{L}^1$, where $\Psi:=\sup_{t \in [0, h]}|(\xi_{\delta_1(t)}, \ldots, \xi_{\delta_k(t)})| \in \mathcal{L}^p$. Also Assumption A-7 holds due to Assumption C-7 with
$$
C_R:=K_R I_{\Omega_R} + \sum_{j=R}^{\infty} K_{j+1} I_{\Omega_{j+1} \backslash{\Omega}_{j}}
$$
where $\Omega_j:=\{\omega \in \Omega:\Psi \leq j \}$. Further one takes $f(R):=K_R$  and then
\begin{align}
P(C_R >f(R)) \leq P(\Psi>R) \leq \frac{E\Psi}{R} \rightarrow 0 \notag
\end{align}
as $R \rightarrow \infty$. This also implies that  Assumption A-8 holds due to Assumption C-8. To verify Assumption B-5, one observes that
\begin{align}
b_t^n(x)=\frac{\beta_t(\xi_{\delta_1(t)}, \ldots, \xi_{\delta_k(t)}, x)}{1+n^{-\theta}|\beta_t(\xi_{\delta_1(t)}, \ldots, \xi_{\delta_k(t)}, x)|} \rightarrow \beta_t(\xi_{\delta_1(t)}, \ldots, \xi_{\delta_k(t)}, x)=b_t(x) \notag
\end{align}
as $n \rightarrow \infty$ and  sequence
\begin{align}
\Big\{ I_{B(R)}\sup_{|x| \leq R}|b_t^n(x)-b_t(x)|^2\Big\}_{\{n \in \mathbb{N}\}} \notag
\end{align}
is uniformly integrable which implies
\begin{align}
\lim_{n \rightarrow \infty} E\int_{t_0}^{t_1}I_{B(R)}\sup_{|x| \leq R}|b_t^n(x)-b_t(x)|^2 dt =0 \notag
\end{align}
and similarly for diffusion and  jump coefficients. Finally Assumption B-6 holds trivially.

Therefore equation \eqref{cnanut} holds due to Theorem \ref{thm:conv:rc}, Lemma \ref{lem:mb:rc} and Lemma \ref{lem:mb:tes} when $i=1$. We note that the convergence here is achieved for all $q < p_1$ and as we proceed to the next interval $[h, 2h]$,  the convergence is achieved in the lower space i.e. $q < p_2$ due to Assumptions C-5 and C-6. Therefore for the inductive arguments, we assume that the convergence in the interval $[(r-1)h, rh]$  is achieved for all $q < p_r$ i.e. we assume that  Theorem \ref{thm:conv:rc}, Lemma \ref{lem:mb:rc} and Lemma  \ref{lem:mb:tes} hold for any $q < p_r$  when $i=r$.
\newline
\textbf{Case: $\mathbf{t \in [r h, (r+1)h]}$.} When $t \in [rh, (r+1)h]$, then SDDE \eqref{eq:sdde} and   scheme \eqref{csddeemwjt} become SDE \eqref{eq:sdewrc} and scheme \eqref{eq:em:sdewrc} respectively with $t_0=rh$, $t_1=(r+1)h$, $x_{t_0}=x_{rh}$, $x^n_{t_0}=x^n_{rh}$ and coefficients given by \eqref{mkjdnswe} and \eqref{ccoffwjt}.
\newline
\textit{Verify  A-3.}  Assumption A-3 holds due to Assumption C-3  trivially.
\newline
\textit{Verify  A-4 and B-1.} Assumptions A-4 and B-1 hold due to Lemma \ref{lem:mb:rc}, Lemma \ref{lem:mb:tes} and inductive assumptions.
\newline
\textit{Verify A-5, A-6, B-2 and B-3.} Assumptions A-5 and B-2  hold due to Assumption C-5 with $M:=1+\sup_{rh \leq t \leq  (r+1)h}|y_t|^\chi$ and $M_n:=1+\sup_{rh \leq t \leq  (r+1)h}|y_t^n|^\chi$ which are bounded in $\mathcal L^\frac{p_{r+1}}{2}$ due to Lemma \ref{lem:mb:rc}, Lemma \ref{lem:mb:tes} and  inductive assumptions. Furthermore Assumptions A-6 and B-3 hold with $M':=1+\sup_{rh \leq t \leq  (r+1)h}|y_t|^{\chi\frac{p_{r+1}}{2}}$ and $M_n':=1+\sup_{rh \leq t \leq  (r+1)h}|y_t^n|^{\chi\frac{p_{r+1}}{2}}$ which are bounded in $\mathcal{L}^1$ due to Lemma \ref{lem:mb:rc}, Lemma \ref{lem:mb:tes}  and  inductive assumptions.
\newline
\textit{Verify  A-7.} For every $R>0$, $|x|, |\bar{x}| \leq R$ and $t \in [rh, (r+1)h]$,  Assumption A-7  holds due to Assumption C-7 with $\mathcal{F}_{rh}$-measurable random variable $C_R$ given by
\begin{align}
C_R:=K_R I_{\Omega_R} + \sum_{j=R}^{\infty} K_{j+1} I_{\Omega_{j+1} \backslash{\Omega}_{j}} \label{jknm1}
\end{align}
where $\Omega_j:=\{\omega \in \Omega:\sup_{t \in [r h, (r+1) h]}|y_t| \leq j \}$. Further one takes $f(R):=K_R$  and then
\begin{align}
P(C_R >f(R)) \leq P\big(\sup_{r h \leq t <(r+1)h}|y_t|>R\big) \rightarrow 0  \, \mbox{as} \,  R \rightarrow \infty.\label{cf*t}
\end{align}
\newline
\textit{Verify A-8.} For every $R>0$ and any $t \in [rh, (r+1)h]$, we take $C_R$ as defined in \eqref{jknm1}, $f(R)=K_R$.  Then one uses \eqref{cf*t} to establish A-8.
\newline
\textit{Verify  B-5.} The inductive assumption implies $|y_t^n-y_t| \rightarrow 0$ in probability and thus due to Assumption C-9, $\sup_{|x|\leq R}|\beta_t^n(y_t^n,x)-\beta_t(y_t,x)| \rightarrow 0$ in probability. Furthermore the sequence
 $$I_{B(R)}\big\{\sup_{|x|\leq R}|\beta_t^n(y_t^n,x)-\beta_t(y_t,x)|^2\big\}_{\{n \in \mathbb{N}\}}$$ is uniformly integrable due to Assumption C-8 and inductive assumption, which implies
\begin{align}
\lim_{n \rightarrow \infty}E \int_{rh}^{(r+1)h} \sup_{|x| \leq R} |b_t^n(x)-b_t(x)|^2 =0. \notag
\end{align}
For  diffusion  coefficient, due to the inductive assumption
$$
\Big\{\sup_{rh \leq t \leq (r+1)h}|y_t^n-y_t|^\chi\Big\}_{\{n \in \mathbb{N}\}} \, \mbox{and hence} \, \,  \Big\{\sup_{rh \leq t \leq (r+1)h}|y_t^n|^\chi\Big\}_{\{n \in \mathbb{N}\}}
$$
are uniformly integrable which on using Assumptions C-5 to C-6 imply
$$
\Big\{\sup_{|x| \leq R}|\alpha_t(y_t^n,x)-\alpha_t(y_t,x)|^2\Big\}_{\{n \in \mathbb{N}\}}
$$
is uniformly integrable. Moreover due to Assumption C-9,
$$
\sup_{|x| \leq R}|\alpha_t(y_t^n,x)-\alpha_t(y_t,x)|^2 \rightarrow 0
$$
in probability as $n \rightarrow \infty$ and therefore Assumption B-5 holds for the diffusion coefficients. One adopts similar arguments for jump coefficients.
\newline
\textit{Verify  B-6.} This follows due to the inductive assumptions.
\newline
This completes the proof.
\end{proof}

We now proceed to obtain the rate of convergence of the scheme \eqref{csddeemwjt}. For this purpose, we replace Assumptions C-7 and C-9 by the following assumptions.
\begin{assumptionC} \label{as:lipschitz:sdde}
 There exist constants $C>0$, $q \geq 2$ and  $\chi>0$ such that,
\begin{align}
 (x-\bar{x})(\beta_t(y,  x)-\beta_t(  y, \bar{x})) \vee|\alpha_t( y, x)-\alpha_t( y, \bar{x})|^2 \vee & \notag
\\
  \int_{Z} |\lambda_t( y, x, z)-\lambda_t( y, \bar{x}, z)|^2 \nu(dz) &\leq C|x-\bar{x}|^2 \notag
\\
\int_{Z} |\lambda_t( y, x, z)-\lambda_t( y, \bar{x}, z)|^q \nu(dz)& \leq C|x-\bar{x}|^q \notag
\\
|\beta_t(  y,  x)-\beta_t(  y,  \bar{x}) |^2  &\leq  C( 1+|x|^{\chi}+|\bar{x}|^{\chi}) |x-\bar{x}|^2 \notag
\end{align}
for any $t \in [0, T]$, $ x , \bar{x} \in \mathbb R^d$,  $y  \in \mathbb R^{d \times k}$ and a $\delta \in (0,1)$ such that $\max\{ (\chi+2)q, \frac{q \chi}{2}\frac{q+\delta}{\delta}\}\leq  p^*$.
\end{assumptionC}

\begin{assumptionC} \label{as:poly:lipschitz:sdde}
Assume that
\begin{align}
 |\beta_t(  y,  x)-\beta_t(  \bar{y},  x) |^2  & \vee |\alpha_t(  y,  x)-\alpha_t(  \bar{y},  x) |^2 \vee \Big(\int_Z |\lambda_t(  y,  x)-\lambda_t(  \bar{y},  x) |^\zeta \nu(dz)\Big)^\frac{q}{\zeta}  \leq  C(1+|y|^{\chi}+|\bar{y}|^{\chi})| y-\bar{y}|^2 \notag
\end{align}
where $\zeta=2,q$, for any $t \in [0,T]$,  $ x \in \mathbb R^d$ and $y , \bar{y} \in \mathbb R^{d \times k}$.
\end{assumptionC}
\begin{remark} \label{as:poly:growth:beta:sdde}
 Due to Assumptions C-8, C-10 and C-11, there exists a constant $C>0$ such that
\begin{align}
|\beta_t(y,x)|^2 \leq C(1+|y|^{\chi+2}+|x|^{\chi+2}) \notag
\end{align}
for  any $t \in[0,T]$, $x \in \mathbb R^d$ and $y  \in \mathbb R^{d \times k}$.
\end{remark}

In the following corollary, we obtain a convergence rate for the tamed Euler scheme \eqref{csddeemwjt} which is equal to the classical convergence rate of Euler scheme. For this purpose, one can take $\theta=\frac{1}{2}$.
\begin{corollary} \label{thm:rate:tem:sdde}
Let Assumptions C-3 to C-6, C-8, C-10 and C-11 be satisfied. Then
\begin{align} \label{eq:rate:i}
E \sup_{0 \leq t \leq T}|x_t-x_t^n|^q \leq K n^{-\frac{q}{q+N'\delta}}
\end{align}
for any $q  <p^*$ where constant $K>0$ does not depend on $n$.
\end{corollary}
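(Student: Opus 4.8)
The plan is to run the same interval-by-interval induction as in the proof of Corollary~\ref{cmainthmwjt}, but to invoke the rate theorem, Theorem~\ref{thm:rate:rc:em}, on each subinterval in place of the qualitative convergence result Theorem~\ref{thm:conv:rc}. As there, SDDE~\eqref{eq:sdde} and its tamed scheme~\eqref{csddeemwjt} are viewed as instances of SDE~\eqref{eq:sdewrc} and scheme~\eqref{eq:em:sdewrc} via the identifications~\eqref{mkjdnswe} and~\eqref{ccoffwjt}, and one proves by induction on $i\in\{1,\dots,N'\}$ that
\[
E\sup_{0\le t\le ih}|x_t-x^n_t|^{m_i}\le K\, n^{-m_i/(m_i+i\delta)},
\]
where the exponents obey the recursion $m_i=m_{i+1}+\delta$, so that $m_{N'}=q$ and $m_1=q+(N'-1)\delta$; the case $i=N'$ is the assertion of the corollary. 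The moment bounds needed for $x$ and $x^n$ on $[(i-1)h,ih]$ at level $p_i$ are produced exactly as in the proof of Corollary~\ref{cmainthmwjt} from Lemmas~\ref{lem:mb:rc} and~\ref{lem:mb:tes}, with $M,M_n$ and $M',M_n'$ built from $\sup|y_t|$ and $\sup|y^n_t|$.

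For the base case $t\in[0,h]$ one has $y^n_t=y_t$ (both deterministic, as $\delta_j(t)\le 0$), so Assumption B-5's replacement Assumption B-7 carries only the taming error: taking $\theta=\tfrac12$, using $|\beta^n_t-\beta_t|\le n^{-1/2}|\beta_t|^2$, Remark~\ref{as:poly:growth:beta:sdde} and the moment bound at level $p_1$, one verifies B-7 with rate $n^{-m_1/2}\le n^{-m_1/(m_1+\delta)}$; B-8 is trivial, and A-3--A-6, A-8, A-9, B-1--B-4 follow from C-3--C-6, C-8 and C-10 as in Corollary~\ref{cmainthmwjt}, now with genuine constants. For the inductive step on $t\in[rh,(r+1)h]$ one applies Theorem~\ref{thm:rate:rc:em} with $t_0=rh$, $t_1=(r+1)h$ and initial values $x_{rh},x^n_{rh}$; here B-8 holds with rate $n^{-m_{r+1}/(m_{r+1}+(r+1)\delta)}$, immediate from the inductive estimate and Jensen's inequality, and the new work is B-7. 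Writing, for the drift,
\[
b_t(x^n_{\kappa(n,t)})-b^n_t(x^n_{\kappa(n,t)})=\big[\beta_t(y_t,x^n_{\kappa(n,t)})-\beta_t(y^n_t,x^n_{\kappa(n,t)})\big]+\big[\beta_t(y^n_t,x^n_{\kappa(n,t)})-\beta^n_t(y^n_t,x^n_{\kappa(n,t)})\big],
\]
one treats the second bracket as in the base case; for the first bracket Assumption C-11 gives $|\beta_t(y_t,\cdot)-\beta_t(y^n_t,\cdot)|^{m_{r+1}}\le C(1+|y_t|^\chi+|y^n_t|^\chi)^{m_{r+1}/2}|y_t-y^n_t|^{m_{r+1}}$, and H\"older's inequality with exponents $\tfrac{m_{r+1}+\delta}{\delta}$ and $\tfrac{m_{r+1}+\delta}{m_{r+1}}$ bounds its time-space integral, using the weight moment bound, by a constant times $\big(E\sup_{0\le s\le rh}|x_s-x^n_s|^{m_r}\big)^{m_{r+1}/m_r}\le K n^{-m_{r+1}/(m_{r+1}+(r+1)\delta)}$ (here $m_r=m_{r+1}+\delta$ and the inductive hypothesis are used). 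The diffusion and jump parts of B-7 contain only the delay bracket, since the scheme leaves $\alpha$ and $\lambda$ untamed, and are handled identically via the $\zeta=2,q$ versions in C-11. Finally, an inspection of the proof of Theorem~\ref{thm:rate:rc:em} shows that its conclusion holds with the right-hand side replaced by a constant times the larger of $n^{-m_{r+1}/(m_{r+1}+\delta)}$ (the built-in one-step contribution, via Lemma~\ref{lem:tem:new:conv:onestep}) and the rates supplied to B-7 and B-8; since the latter dominate, one obtains $E\sup_{rh\le t\le (r+1)h}|x_t-x^n_t|^{m_{r+1}}\le K n^{-m_{r+1}/(m_{r+1}+(r+1)\delta)}$, which together with the inductive estimate on $[0,rh]$ closes the induction.

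The step I expect to be the main obstacle is the moment bookkeeping. At stage $i$ one needs the $m_i$-th moment of $x-x^n$ on the previous interval and the $\tfrac{m_i\chi}{2}\tfrac{m_i+\delta}{\delta}$-th moments of $x$ and $x^n$ (as well as $m_i+\delta\le p_i$ for the one-step estimate), and since the climbing exponents $m_i=q+(N'-i)\delta$ are largest precisely where the moment budget $p_i=(2/\chi)^i p$ is also largest, one must check that these two never cross as $i$ decreases; this is exactly what the definition $p^*=(2/\chi)^{N'}p$ together with the admissibility condition $\max\{(\chi+2)q,\tfrac{q\chi}{2}\tfrac{q+\delta}{\delta}\}\le p^*$ in Assumption C-10 is designed to guarantee, the case $i=N'$ of the moment requirement being literally that condition. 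A secondary point worth making explicit is that Theorem~\ref{thm:rate:rc:em} is not used as a black box but through the remark that the rate it delivers is the rate fed into its Assumptions B-7 and B-8, so that the per-interval degradation of the exponent's denominator by $\delta$ propagates cleanly and accumulates to $n^{-q/(q+N'\delta)}$ after $N'$ steps.
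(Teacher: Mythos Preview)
Your proposal is correct and follows essentially the same inductive strategy as the paper's proof: apply Theorem~\ref{thm:rate:rc:em} interval by interval, verifying A-9, B-7 and B-8 from C-10, C-11 and the inductive hypothesis, and track how the rate exponent degrades by $\delta$ at each step (both the paper and you use Theorem~\ref{thm:rate:rc:em} in the slightly generalized form that the output rate matches the worst rate fed into B-7 and B-8). The only cosmetic difference is bookkeeping: the paper states the inductive claim as holding for all $q<p_i$ and then invokes it at exponent $q+\delta$ in the H\"older step for B-7, whereas you fix the target $q$ and work with the explicit chain $m_i=q+(N'-i)\delta$; both organizations deliver the same final bound $Kn^{-q/(q+N'\delta)}$.
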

\begin{proof}
The corollary can be proved by adopting similar arguments as used in the proof of Corollary \ref{cmainthmwjt}. For this purpose, one can use Theorem \ref{thm:rate:rc:em} inductively to show that for every $i=1,\ldots,N'$,
\begin{align*}
E \sup_{(i-1)h \leq t \leq ih}|x_t-x_t^n|^q \leq K n^{-\frac{q}{q+i\delta}}
\end{align*}
for any $q  <p_i$ where constant $K>0$ does not depend on $n$. Now, notice that Assumptions A-3 through A-6, A-8 and B-1 through B-3 have already been verified in the proof of Corollary \ref{cmainthmwjt}. Hence, one only needs to verify Assumptions A-9, B-7 and B-8.
\newline
\textbf{Case} $\mathbf{t \in [0,h]}.$ As before, one considers SDDE \eqref{eq:sdde} as a special case of SDE \eqref{eq:sdewrc} with $t_0=0$, $t_1=h$, $x_{t_0}=\xi_0$ and coefficients given by equation \eqref{mkjdnswe}. Also, scheme \eqref{csddeemwjt} can
be considered as a special case of scheme \eqref{eq:em:sdewrc} with $t_0=0$, $t_1=h$, $x_{t_0}=\xi_0$ and coefficients given by \eqref{ccoffwjt}.
\newline
\textit{Verify A-9.} Assumption A-9 follows from Assumption C-10 trivially.
\newline
\textit{Verify B-7.} Notice that $y_t=y_t^n=:\Phi_t$ for $t \in [0,h]$ which implies
\begin{align*}
 E\int_{0}^{h}|b^n_t(x_{\kappa(n,t)}^n)-b_t(x_{\kappa(n,t)}^n)|^q dt  \leq n^{-q \theta} E\int_{0}^{h}\big|\beta_t(\Phi_t, x_{\kappa(n,t)}^n)\big|^{2q} dt
\end{align*}
which on using Remark \ref{as:poly:growth:beta:sdde}, Assumption C-4 and Lemma \ref{lem:mb:tes} gives
\begin{align*}
 E\int_{0}^{h}|b^n_t(x_{\kappa(n,t)}^n)-b_t(x_{\kappa(n,t)}^n)|^q dt   & \leq n^{-q \theta} K \big(1+ E\Psi^{(\chi+2) q}
 + E\sup_{0 \leq t \leq h}| x_{\kappa(n,t)}^n|^{(\chi+2) q}\big) \leq K n^{-\frac{q}{2}}
\end{align*}
for any $q<p_1$ because $\theta=\frac{1}{2}$.
\newline
\textit{Verify B-8.} This holds trivially.
\newline
Thus, by Theorem \ref{thm:rate:rc:em}, one obtains that equation \eqref{eq:rate:i} holds for $i=1$. For inductive arguments, one assumes that equation \eqref{eq:rate:i} holds for $i=r$ and then verifies it for $i=1+r$.
\newline
\textbf{Case} $\mathbf{t \in [rh,(r+1)h]}.$ Again, consider SDDE \eqref{eq:sdde}  as a special case of SDE \eqref{eq:sdewrc} with $t_0=rh$, $t_1=(r+1)h$, $x_{t_0}=x_{rh}$ and coefficients given by equation \eqref{mkjdnswe}. Similarly, consider scheme \eqref{csddeemwjt}  as a special case of scheme \eqref{eq:em:sdewrc} with $t_0=rh$, $t_1=(r+1)h$, $x_{t_0}=x_{rh}$ and  coefficients given by  \eqref{ccoffwjt}.
\newline
\textit{Verify A-9.} Assumption A-9 follows from Assumption C-10 trivially.
\newline
\textit{Verify B-7.} One observes that
\begin{align*}
& \qquad\qquad  E\int_{0}^{h}|b^n_t(x_{\kappa(n,t)}^n)-b_t(x_{\kappa(n,t)}^n)|^q dt
\\
&\leq K E\int_{0}^{h}\big|\frac{\beta_t(y_t^n, x_{\kappa(n,t)}^n)}{1+n^{-\theta}|\beta_t(y_t^n, x_{\kappa(n,t)}^n)|}-\beta_t(y_t^n, x_{\kappa(n,t)}^n)\big|^q dt
\\
& + K E\int_{0}^{h} \big|\beta_t(y_t^n, x_{\kappa(n,t)}^n)-\beta_t(y_t, x_{\kappa(n,t)}^n)\big|^q dt
\\
& \leq K  n^{-q\theta}E\int_{0}^{h}|\beta_t(y_t^n,x_{\kappa(n,t)}^n)|^{2q} dt + K E\int_{0}^{h} \big|\beta_t(y_t^n, x_{\kappa(n,t)}^n)-\beta_t(y_t,x_{\kappa(n,t)}^n)\big|^q dt
\end{align*}
which on the application of Remark \ref{as:poly:growth:beta:sdde} and Assumption C-11 gives
\begin{align*}
E\int_{0}^{h}&|b^n_t(x_{\kappa(n,t)}^n)-b_t(x_{\kappa(n,t)}^n)|^q dt \leq  K n^{-q\theta} E\int_{0}^{h} (1+|y_t^n|^{(\chi+2)q}+|x_{\kappa(n,t)}^n|^{(\chi+2)q}) dt
\\
&+ K E\int_{0}^{h} (1+|y_t|^\frac{q\chi}{2}+|y_t^n|^\frac{q\chi}{2}) |y_t-y_t^n|^q dt
\end{align*}
and then on the application of H\"older's inequality, Lemma \ref{lem:mb:rc} and Lemma \ref{lem:mb:tes} along with inductive assumptions gives
\begin{align*}
E\int_{0}^{h}|b^n_t(x_{\kappa(n,t)}^n)-b_t(x_{\kappa(n,t)}^n)|^q dt \leq K n^{-q\theta} + K E\int_{0}^{h} (E|y_t-y_t^n|^{q+\delta})^\frac{q}{q+\delta}.
\end{align*}
Finally on using the inductive assumption and $\theta=\frac{1}{2}$, one obtains
\begin{align*}
E\int_{0}^{h}|b^n_t(x_{\kappa(n,t)}^n)-b_t(x_{\kappa(n,t)}^n)|^q dt \leq K n^{-\frac{q}{2}} + K n^{-\frac{q}{q+(r+1)\delta}}
\end{align*}
and hence \eqref{eq:rate:i} holds for $i=r+1$.
\newline
\textit{Verify B-8.} This holds due to inductive assumptions.
\newline
Thus, by Theorem \ref{thm:rate:rc:em}, one obtains that equation \eqref{eq:rate:i} holds for $i=r+1$. This completes the proof.
\end{proof}

\section{Numerical Illustrations}
We demonstrate our results numerically with the help of following examples.
\newline
\textbf{Example 1.} Consider the following SDE,
\begin{align} \label{eq:sde:sim}
dx_t =& -x_t^5 dt + x_t dw_t + \int_{\mathbb{R}} x_t z \tilde N(dt, dz)
\end{align}
for any $t \in [0, 1]$  with initial value $x_0 =1$. The jump size follows standard normal distribution and jump intensity is $3$. The tamed Euler scheme with step-size $2^{-21}$ is taken as true solution. Table \ref{tab:sde} and Figure \ref{fig:sde} are based on 1000 simulations.
\begin{table}[h]
\footnotesize
\centering
\footnotesize
\caption{\em SDE: Errors in the tamed Euler scheme.}
\begin{tabular}{|c|c|c|} \hline
step-size& $\sqrt{E|x_t-x_t^n|^2}$ & $E|x_t-x_t^n|$  \\ \hline
$2^{-20}$& 0.000983465083412957  &	0.000359729516674718 \\
$2^{-19}$& 0.00216716723504906	  & 0.000696592563650715 \\
$2^{-18}$& 0.00392575778408420	  & 0.00117629823362591 \\
$2^{-17}$& 0.00577090918102760	  & 0.00176826651345228 \\
$2^{-16}$& 0.00788070333470230	  & 0.00265746428431957 \\
$2^{-15}$& 0.0114588451477506    & 0.00398287796962204 \\
$2^{-14}$& 0.0152592153162732	  & 0.00568182096844841 \\
$2^{-13}$& 0.0214987425830999    & 0.00775473960140893 \\
$2^{-12}$& 0.0300412202466655	  & 0.0117456051149168 \\
$2^{-11}$& 0.0434809466351964	  & 0.0168998838844189 \\ \hline
\end{tabular}
 \label{tab:sde}
\end{table}
\newline
\noindent
\textbf{Example 2.} Consider the following SDDE,
\begin{align} \label{eq:sdde:sim}
dx_t = (x_t-x_t^3+ y_t^2)dt+(x_t+y_t^3)dw_t+ \int_{\mathbb{R}} (x_t+y_t) z \tilde N(dt,dz)
\end{align}
where $y_t=x_{t-1}$ for $t\in [0,2]$ with initial data $\xi_t=t+1$ for $t \in [-1,0]$.
\begin{figure}[!ht]
        \centering
        \begin{subfigure}[b]{0.49\linewidth} \centering
                \includegraphics[width=\linewidth]{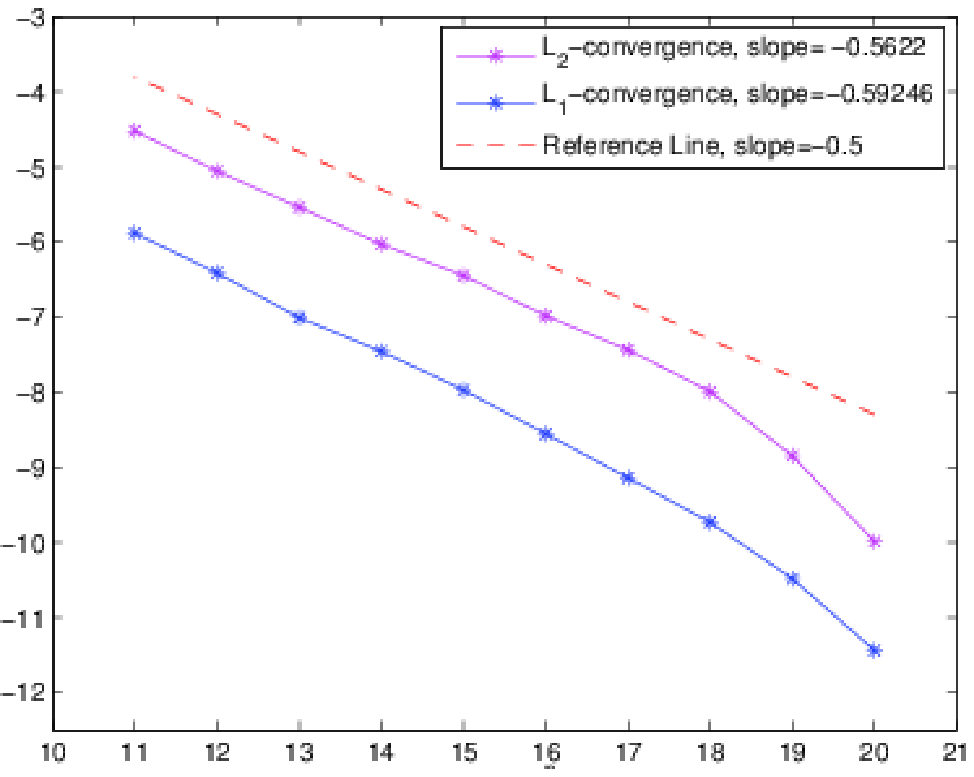}
                \caption{\em SDE: $\mathcal{L}^1$ and $\mathcal{L}^2$ convergence with rate}
                \label{fig:sde}
        \end{subfigure} \hfill
        \begin{subfigure}[b]{0.49\linewidth} \centering
                \includegraphics[width=\linewidth]{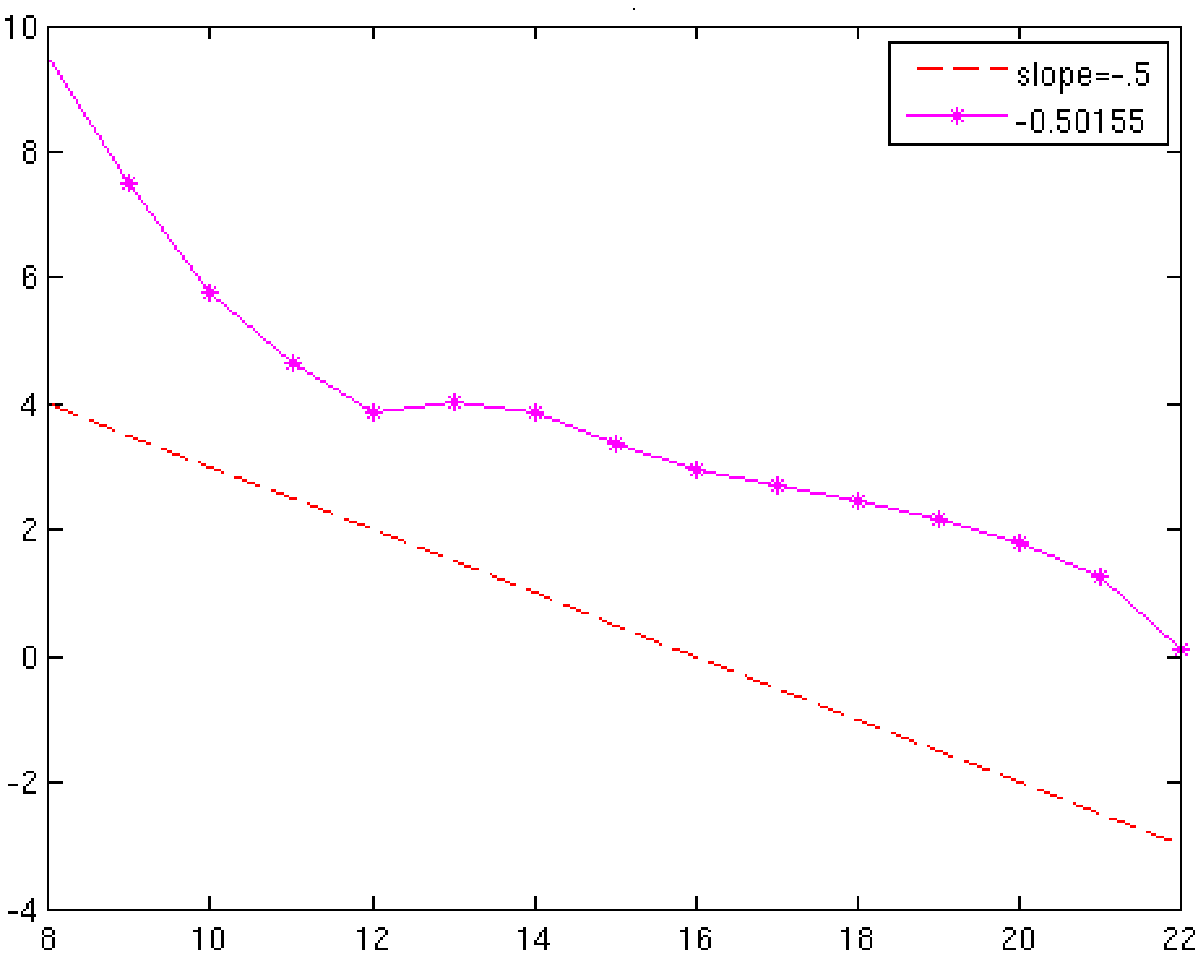}
                \caption{\em SDDE: $\mathcal{L}^2$ convergence with rate}
                \label{fig:sdde}
        \end{subfigure}
        \caption{\em Tamed Euler Schemes of SDE \eqref{eq:sde:sim} and SDDE \eqref{eq:sdde:sim}}\label{fig:}
\end{figure}
The jump size follows standard Normal distribution and jump intensity  is $3$. The tamed scheme with step size $2^{-23}$ is taken as the true solution. Figure \ref{fig:sdde} is based on $300$ sample paths.
\normalsize

\end{document}